\newcommand{\R}{\mathbb{R}}
\newcommand{\C}{\mathbb{C}}
\newcommand{\Q}{\mathbb{Q}}
\newcommand{\Z}{\mathbb{Z}}
\newcommand{\der}{\mathop{\mathrm{der}}}
\newtheorem{thm}{Theorem}[section]
\newtheorem{cor}[thm]{Corollary}
\newtheorem{lemma}[thm]{Lemma}
\theoremstyle{remark}
\newtheorem*{remark}{Remark}
\theoremstyle{definition}
\newtheorem{defn}{Definition}[section]
\title{Angels' staircases, Sturmian sequences, \\
and trajectories on homothety surfaces}
\author{Joshua Bowman and Slade Sanderson}
\begin{document}

\begin{abstract}
A homothety surface can be assembled from polygons by 
identifying their edges in pairs via homotheties, which 
are compositions of translation and scaling. We consider 
linear trajectories on a $1$-parameter family of genus-$2$ 
homothety surfaces. The closure of a trajectory on each of 
these surfaces always has Hausdorff dimension $1$, and 
contains either a closed loop or a lamination with Cantor 
cross-section. Trajectories have cutting sequences that 
are either eventually periodic or eventually Sturmian. 
Although no two of these surfaces are affinely equivalent, 
their linear trajectories can be related directly to those 
on the square torus, and thence to each other, by means of 
explicit functions. We also briefly examine two related 
families of surfaces and show that the above behaviors can 
be mixed; for instance, the closure of a linear trajectory 
can contain both a closed loop and a lamination.
\end{abstract}

\maketitle

%\setcounter{section}{-1}
%\section*{Introduction}

A homothety of the plane is a similarity that preserves 
directions; in other words, it is a composition of 
translation and scaling. A homothety surface has an atlas 
(covering all but a finite set of points) whose transition 
maps are homotheties. Homothety surfaces are thus 
generalizations of translation surfaces, which have been 
actively studied for some time under a variety of guises 
(measured foliations, abelian differentials, unfolded 
polygonal billiard tables, etc.). Like a translation surface, 
a homothety surface is locally flat except at a finite set 
of singular points---although in general it does not have 
an accompanying Riemannian metric---and it has a well-defined 
global notion of direction, or slope, again except at the 
singular points. It therefore has, for each slope, a foliation 
by parallel leaves. Homothety surfaces admit affine deformations 
with globally-defined derivatives (up to scaling).

One can ask many of the same questions about homothety surfaces 
as are commonly asked about translation surfaces, for instance 
regarding the structure of their foliations and the affine 
automorphisms they admit. Homothety surfaces have appeared 
sporadically in the literature, but much remains to be learned 
about their dynamical properties.

In this article we study a one-parameter family of homothety 
surfaces $X_s$ in genus $2$ (see \S\ref{SS:mainexample} for 
their definition), focusing on the dynamical properties of 
their geodesics, which we refer to as linear trajectories. 
We show that the closure of any linear trajectory is nowhere 
dense, although in certain cases it is locally the product 
of a Cantor set and an interval. We also consider the cutting 
sequences of these trajectories and show that they are either 
eventually periodic or Sturmian sequences. As far as we know, 
this form of symbolic dynamics has not previously been 
approached for (non-translation) homothety surfaces. 
See \S\ref{SS:remarks} for further remarks on how our results 
relate to previous work.

The structure of the paper is as follows. 
In \S\ref{S:definitions} we provide background 
definitions, state our main results, and collect 
some well-known tools. 
Sections \ref{S:staircase} and \ref{S:symbolic} are 
largely technical, although they introduce some objects 
that may have broader interest. 
In \S\ref{S:proofs} we use the material of the 
preceding sections to prove our main results. 
Finally, in \S\ref{S:related} we show that, by modifying 
the construction of $X_s$, we can produce surfaces with 
linear trajectories that exhibit different dynamical 
behaviors in forward and backward time.

%\subsection*{Acknowledgements} The authors thank 
%Pepperdine University for support via the SURP 
%(Summer Undergraduate Research Program) and URF 
%(Undergraduate Research Fellowship) grants. 
%They are also grateful to Katherine Koch and Judy Wang 
%for early contributions to this project. We found 
%the notes \cite{Davis} by Diana Davis very helpful.

\section{Definitions and results}\label{S:definitions}

\subsection{Homothety surfaces}

\begin{defn}
A \emph{homothety} is a complex-affine map $h : \C \to \C$ 
of the form $h(z) = az + b$, where $a \in \R\setminus\{0\}$. 
If $a > 0$, then we say it is a \emph{direct} homothety.
\end{defn}

If $a = 1$ in the above definition, the homothety is a 
translation; otherwise, the homothety has a single fixed 
point in $\C$. In this paper, we will only work 
with direct homotheties. (Negative values of $a$ make it 
possible to generalize quadratic differentials, which also 
go by the name of ``half-translation surfaces'': maps of 
the form $h(z) = -z + b$ are half-translations in the sense 
that a composition of two such maps is a translation.) 
Homotheties are complex-analytic, and so they can be used 
to define complex structures on a surface. However, on a 
compact surface of genus greater than $1$, it is necessary 
to allow singularities at which the curvature of the surface 
is concentrated, and so we adopt the following definition.

\begin{defn}
A \emph{homothety surface} is a connected orientable surface 
$X$ together with a discrete subset $Z \subset X$ (called 
the \emph{singular set}) and an atlas on $X \setminus Z$ 
whose transition maps are homotheties, in such a way that 
the points of $Z$ become removable singularities with 
respect to the induced complex structure.
\end{defn}

We will suppress the dependence on the singular set $Z$ 
and the atlas in our notation and simply refer to the 
homothety surface $X$. The definition given in \cite{BFG17} 
is more restrictive than ours, in that it requires that 
each singular point have a neighborhood that is affinely 
equivalent to a Euclidean cone, but the difference is not 
relevant to the present work.

An important example is the quotient of an annulus 
$1 \le |z| \le r$ by the homothety $h(z) = rz$. The 
resulting surface has no singular points and is called 
a \emph{Hopf torus}.

A more general construction of homothety surfaces is to 
start with a finite collection of disjoint polygons 
$P_1,\dots,P_n$ in $\C$ and identify their edges in 
pairs via homotheties. The singular set is the image of 
the vertices of the polygons. (This construction is 
analogous to the well-known construction of translation 
surfaces from polygons.)

A homothety surface is automatically endowed with a flat 
connection on the tangent bundle of $X \setminus Z$, with 
holonomy in $\R^+ = (0,\infty)$. Unlike in the case of 
translation surfaces, this connection does not generally 
come from a Riemannian metric, and it does not provide a 
trivialization of the tangent bundle of $X \setminus Z$. 
It does, however, trivialize the circle bundle of directions, 
because scaling by a nonzero real number does not change the 
direction of a tangent vector. We therefore can refer 
to the \emph{slope} of any nonzero tangent vector based 
at a point of $X \setminus Z$; the slope takes values in 
$\mathbb{RP}^1 = \R \cup \{\infty\}$.

\subsection{Linear trajectories, cycles, and laminations}

The observations of the preceding paragraph make possible 
the following definitions.

\begin{defn}
Let $X$ be a homothety surface with singular set $Z$. 
A \emph{linear trajectory} on $X$ is a smooth curve 
$\tau : I \to X$, where $I \subseteq \R$ is an interval, 
such that the image of the interior of $I$ lies in 
$X \setminus Z$ and on this interior the tangent vector 
$\tau'$ has constant slope. 
A linear trajectory is \emph{critical} if $I$ includes 
at least one endpoint, and the image of this endpoint 
lies in $Z$. 
A linear trajectory is a \emph{saddle connection} if $I$ 
is compact and the image of both endpoints lies in $Z$.
\end{defn}

These definitions directly generalize the corresponding 
notions on translation surfaces. Note, however, that in 
the absence of a norm on the tangent bundle, it does not 
make sense to require that a linear trajectory have constant 
(much less unit) speed, though one can enforce that a 
trajectory have \emph{locally constant} speed, in the sense 
that $(z \circ \tau)'$ is constant in any local coordinate 
$z$. We adopt this assumption of locally constant speed, 
which corresponds to assuming linear trajectories are 
geodesics.

We will usually specify a trajectory $\tau$ by its 
starting point $\tau(0)$ and its slope $m$. These data 
do not completely determine the trajectory, as the 
direction of a trajectory can be reversed, so we will 
call the \emph{forward direction} of $\tau$ the 
parametrization for which the $x$-coordinate is locally 
increasing and the \emph{backward direction} the 
parametrization for which the $x$-coordinate is locally 
decreasing. (This assumes that $m \ne \infty$; for 
a vertical trajectory, we call the upward direction 
forward.) We also assume that each linear trajectory is 
\emph{maximal}, meaning that its domain cannot be extended 
to a larger interval.

\begin{defn}
A linear trajectory is \emph{closed} if its image 
is homeomorphic to a circle and it is not a saddle 
connection. The image of a closed trajectory is a 
\emph{(linear) cycle}.
\end{defn}

Periodic trajectories are closed, but a closed trajectory 
is not necessarily periodic, because it may return to the 
same point of $X$ with a different tangent vector (``at a 
different speed'').

\begin{defn}
A \emph{(linear) lamination} on a homothety surface $X$ is 
a nowhere-dense closed subset $\Lambda \subset X\setminus Z$ 
that is the union of the images of a collection of parallel 
linear trajectories, each of which is called a \emph{leaf} 
of $\Lambda$. A lamination $\Lambda$ is \emph{minimal} if 
it has a dense leaf.
\end{defn}

Note that we define a linear lamination $\Lambda$ to be 
a closed subset of $X \setminus Z$, not of $X$. This 
convention ensures that $\Lambda$ is a lamination in the 
usual topological sense: it is locally the product of an 
interval and another topological space (for example, a 
Cantor set). However, when we consider the closure 
$\overline\Lambda$ on $X$, this description may break down, 
if $\Lambda$ contains more than two critical trajectories 
with the same endpoint in $Z$. A lamination on $X$ carries 
a transverse affine structure, as in \cite{HatcherOertel}.

The simplest example of a lamination is a linear cycle; 
a single cycle is also an example of a minimal lamination. 
More generally, if the image of a linear trajectory is 
nowhere dense on $X$, then the closure of this image (in 
$X \setminus Z$) is a minimal lamination. A union of 
parallel cycles is an example of a non-minimal lamination.

On a translation surface, the closure of a trajectory is 
either a cycle, a saddle connection, or a subsurface. As 
we shall see, however, other kinds of minimal laminations 
can exist on a homothety surface.

\subsection{Affine maps and Veech group}\label{SS:affine}

The next definition carries over directly from the 
case of translation surfaces.

\begin{defn}
Let $X$ and $Y$ be homothety surfaces. A continuous, 
open map $\phi : X \to Y$ is called \emph{affine} if 
it is affine in local charts, excluding singularities. 
$X$ and $Y$ are \emph{affinely equivalent} if there 
exists an affine homeomorphism $\phi : X \to Y$. The group 
of affine self-maps of $X$ is written $\mathrm{Aff}(X)$.
\end{defn}

Each affine map $\phi$ has a derivative $\der\phi$ that 
is globally well-defined up to scaling (because scaling 
commutes with all other linear maps of $\R^2$). Hence we 
can normalize to assume that the derivative has determinant 
$\pm 1$, so that $\der\phi \in \mathrm{GL}(2,\R)/\R^+ \cong 
\mathrm{SL}(2,\R) \rtimes \{\pm1\}$.

\begin{defn}
Let $X$ be a homothety surface. The \emph{(generalized) 
Veech group} of $X$ is the image $\Gamma(X)$ of the 
derivative map 
$\der : \mathrm{Aff}(X) \to \mathrm{GL}(2,\R)/\R^+$.
\end{defn}

\begin{remark}
If we allow indirect homotheties in the construction of 
a homothety surface, then its Veech group is naturally 
a subgroup of $\mathrm{PGL}(2,\R)$ rather than 
$\mathrm{GL}(2,\R)/\R^+$.
\end{remark}

\subsection{Cylinders}

By a standard argument, every periodic trajectory produces 
a cycle that is contained in a Euclidean cylinder foliated 
by parallel, homotopic cycles. The same argument shows that 
a non-periodic closed trajectory produces a cycle that is 
contained in an affine cylinder foliated by non-parallel 
(but still homotopic) trajectories. In both cases, the 
boundary of the cylinder is formed of saddle connections.

In general, the maximal cylinder containing the image of 
a non-periodic closed trajectory is affinely equivalent to 
a subsurface of a Hopf torus cover, but we will not need 
this generality. Each affine cylinder we will consider 
can be obtained from a trapezoid in the plane by 
identifying its parallel sides via the unique homothety 
$h$ under which the longer side is mapped to the shorter. 
The non-identified sides of the trapezoid will then pass 
through the fixed point of $h$. The derivative $h'$ is the 
\emph{scaling factor} of the cylinder; by convention, 
$h' < 1$.

We remark that affine maps preserve cylinders and scaling 
factors.

\subsection{Attractors}

Another phenomenon that may occur on homothety surfaces but 
not translation surfaces is the existence of attractors.

\begin{defn}
Let $X$ be a homothety surface and $m \in \mathbb{RP}^1$ 
a slope. A \emph{forward attractor} for slope $m$ is a 
closed subset $\Sigma \subset X \setminus Z$ such that:
\begin{enumerate}
[label*={(\roman*)}]
\item every forward trajectory with slope $m$ that starts 
in $\Sigma$ remains in $\Sigma$ for all time;
\item there exists an open subset $U$ containing $\Sigma$ 
such that, for every open set $V$ containing $\Sigma$, any 
forward trajectory with slope $m$ that starts in $U$ is 
eventually always in $V$ (that is, if $\tau(0) \in U$, 
then there exists $t_0$ such that $\tau(t) \in V$ for all 
$t > t_0$);
\item $\Sigma$ is the closure of the image of a forward 
trajectory with slope $m$.
\end{enumerate}
The largest open set $U$ satisfying condition (ii) is 
called the \emph{basin of attraction} for $\Sigma$.
\end{defn}

A \emph{backward attractor} for slope $m$ is defined 
analogously. A backward attractor may also be called a 
\emph{forward repeller}, and vice versa. 

The simplest kind of attractor is an \emph{attracting 
cycle}, which is an attractor that is homeomorphic to 
$S^1$. As we saw in the previous section, an attracting 
cycle is contained in an affine cylinder with scaling 
factor $<1$; the interior of this cylinder is contained 
in the basin of attraction of the cycle. The scaling factor 
determines the ``rate of convergence'' to the attracting 
cycle of trajectories that enter the cylinder.

We shall also see examples of \emph{attracting laminations}. 
As before, our convention is that these are closed subsets 
of $X \setminus Z$, not of $X$; the reason is that otherwise 
attracting and repelling laminations could intersect at 
a singular point.

\subsection{Main example}\label{SS:mainexample}

We will primarily study a one-parameter family of homothety 
surfaces $X_s$, with $s \in (0,1)$. The construction of 
these surfaces is shown in Figure~\ref{F:Xs}.

\begin{figure}[h]
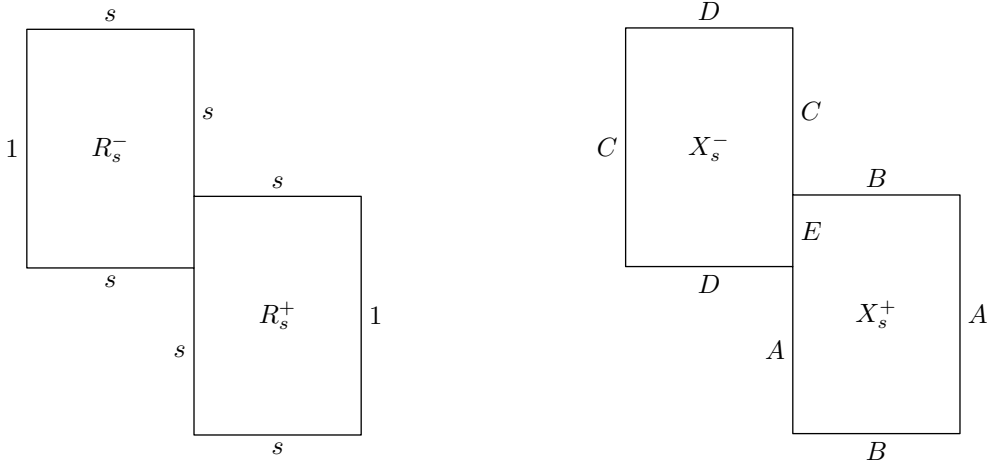

\begin{minipage}{2in}
\centering
\includegraphics{surfacefig-1.mps}
\end{minipage}
\hspace{1in}
\begin{minipage}{2in}
\centering
\includegraphics{surfacefig-2.mps}
\end{minipage}
\caption{{\sc Left:} The union of the rectangles $R_s^\pm$. 
{\sc Right:} Edge identifications that produce the 
surface $X_s$. Horizontal edges are identified by 
translations; vertical edges are identified by 
homotheties. $X_s^+$ and $X_s^-$ are genus $1$ subsurfaces 
with boundary. They are joined along the 
saddle connection $E$.}\label{F:Xs}
\end{figure}

To be explicit, we start with two rectangles $R_s^+$ and 
$R_s^-$ having horizontal side length $s$ and vertical 
side length $1$. The top of the left-hand side of $R_s^+$ 
is identified with the bottom of the right-hand side of 
$R_s^-$ along a segment $E$ of length $1 - s$ via an 
isometry, to create an eight-sided polygon. Each horizontal 
edge of $R_s^\pm$ is identified with the one directly above 
or below it via translation. Each vertical edge of length 
$1$ is identified with the opposite edge of length $s$ via 
homothety. The result is a genus $2$ surface $X_s$ with one 
singular point.

$X_s$ admits an involution $\rho_s : X_s \to X_s$ that exchanges 
$X_s^+$ and $X_s^-$ and may be visualized in Figure~\ref{F:Xs} 
as rotation around the midpoint of edge $E$. The derivative 
of $\rho_s$ is $-\mathrm{id}$. 

Another affine automorphism $\phi_s : X_s \to X_s$ is given 
by simultaneous Dehn twists in the vertical cylinders that 
pass through $X_s^+$ and $X_s^-$. The derivative of $\phi_s$ 
is $\left(\begin{smallmatrix}
1 & 0 \\ 1/s & 1
\end{smallmatrix}\right)$, so that a linear 
trajectory with slope $m \ne \infty$ is sent by 
$\phi_s$ to a trajectory of slope $m+1/s$.

 $\mathrm{Aff}(X_s)$ also contains an orientation-reversing 
 involution $\psi_s$ that has derivative 
 $\left(\begin{smallmatrix}
 1 & 0 \\ (1-s)/s & -1
 \end{smallmatrix}\right)$. This map $\psi_s$ reflects the 
 edges $A$, $C$, and $E$ across their respective midpoints. 
 It may be visualized as reflection across a horizontal axis 
 in a surface that is affinely equivalent to $X_s$.

\subsection{Cutting sequences}

Let $\tau : I \to X_s$ be a linear trajectory. We assume 
that the domain $I$ of $\tau$ is maximal, that $0 \in I$, 
and that if $\tau$ is a critical trajectory, then 
$\tau(0)$ is the singular point of $X_s$. If the image 
of $\tau$ is not entirely contained in any of the edges 
$A,B,C,D,E$, then $\tau$ meets these edges in sequence 
at a discrete set of times $0 < t_1 < t_2 < \cdots$. 
We define a word $c(\tau) = w_1 w_2, \dots$, called the 
\emph{cutting sequence} of $\tau$, by $w_\ell = L$ if 
$\tau(t_\ell) \in L$, where $L$ is taken from the 
alphabet $\{A,B,C,D,E\}$. The cutting sequence may be 
finite (if $\tau$ is a saddle connection) or infinite.

Let $X_s^+$ be the part of $X_s$ formed from $R_s^+$ 
and $X_s^-$ be the part of $X_s$ formed from $R_s^-$, 
as shown in Figure~\ref{F:Xs}. Then $X_s^+$ and $X_s^-$ 
are ``stable subsurfaces'' for forward and backward 
linear trajectories, respectively. That is, a trajectory that 
starts in $X_s^+$ will remain in $X_s^+$ in the forward 
direction, and a trajectory that starts in $X_s^-$ will 
remain in $X_s^-$ in the backward direction. Consequently, 
any linear trajectory on $X_s$ crosses the edge $E$, which 
joins $X_s^+$ and $X_s^-$, at most once. The cutting sequence 
of a typical trajectory is thus expected to consist of $C$s and 
$D$s in the negative direction and $A$s and $B$s in the 
positive direction, separated by one appearance of $E$.

\subsection{Main results}

The following two theorems about linear trajectories on 
the surfaces defined in \S\ref{SS:mainexample} summarize 
our main results.

\begin{thm}\label{T:main1}
Let $0 < s < 1$ be given, and let $X_s$ be the surface 
constructed as in Figure~\ref{F:Xs}.
\begin{enumerate}
[label*={(\roman*)}]
\item In the space of directions $\mathbb{RP}^1$, there 
is an open set $U_s$ of full measure such that, for all 
$m \in U_s$, $X_s$ has an attracting cycle $\Sigma^+$ and 
a repelling cycle $\Sigma^-$ in the direction $m$. The 
basin of attraction for either $\Sigma^+$ or $\Sigma^-$ 
is dense in $X_s$.
% If $m \neq \infty$, then all other trajectories 
% with slope $m$ are asymptotic to $\tau_m$. 
\item The complement of $U_s$ is a Cantor set $C_s$ whose 
Hausdorff dimension is $0$.
\item In $C_s$ there is a countable set $C'_s$ of directions 
$m$ that have a saddle connection $\tau_m$ such that all 
non-critical trajectories with slope $m$ are asymptotic to 
$\tau_m$.
\item For any direction $m \in C_s \setminus C'_s$, there is 
an attracting lamination $\Sigma^+$ and a repelling 
lamination $\Sigma^-$ in the direction $m$, each having 
a Cantor set cross-section with Hausdorff dimension $0$. 
The basin of attraction for $\Sigma^+$ is the complement 
of $\Sigma^-$.
\end{enumerate}
\end{thm}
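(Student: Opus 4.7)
The plan is to study the slope dynamics via renormalization by the Veech group, then transfer information about orbit types to geometric conclusions about trajectories. The key tools are the affine automorphism $\phi_s$ (whose derivative acts on slopes by $m \mapsto m + 1/s$) together with the involution $\rho_s$ (derivative $-\mathrm{id}$), and presumably the staircase function and symbolic dynamics developed in Sections~\ref{S:staircase}--\ref{S:symbolic}. For each slope $m$, I would consider the first-return map $T_m$ to the horizontal transversal consisting of the bottom edges of $R_s^+$ and $R_s^-$. Because vertical edges are identified via homotheties with scaling factor $s<1$, the map $T_m$ is piecewise affine with some branches contracting; the existence and behavior of attracting objects is controlled by how these branches compose.

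For parts (i) and (ii), I would identify an interval $J$ of ``terminating'' slopes---those for which $T_m$ immediately exhibits a contraction mapping onto a proper subinterval of the transversal. A standard Banach-fixed-point argument then gives a closed loop inside a Euclidean cylinder that is an attractor, and the same analysis applied to backward trajectories (or via $\rho_s$) produces the repeller. Using the Veech group renormalization $m \mapsto m + 1/s$ (restricted appropriately), one pulls $J$ back through every iterate, producing an open dense union of intervals $U_s$ of full measure. The complement $C_s$ is a Cantor set by construction (it is a nested intersection of closed sets with interior components removed). To get Hausdorff dimension zero I would exhibit, at each stage of the renormalization, a uniform proportion of each surviving interval lying in $J$, so that the resulting Moran-like construction has dimension controlled by $\log(1-c)/\log r_n$ with $r_n \to 0$; because the contraction ratios improve super-exponentially under successive applications of $\phi_s$, the dimension is zero.

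For part (iii), a slope $m$ gives a saddle connection exactly when the forward or backward orbit of the singular point under straight-line flow hits the singular point again. In terms of the renormalization, these are the slopes whose Veech orbit eventually lands exactly on the endpoint of a renormalization interval. Such slopes form a countable subset $C'_s$ of $C_s$, and the limit argument showing all non-critical trajectories are asymptotic to $\tau_m$ follows from the same contraction argument used in part (i), where now the ``cylinder'' collapses onto the saddle connection. For part (iv), any $m \in C_s \setminus C'_s$ has an infinite renormalization history that never lands on a boundary; the associated cutting sequence is Sturmian (per the announced theorem of the paper), and one constructs the attracting lamination $\Sigma^+$ as the intersection of a nested sequence of cylinder-like neighborhoods obtained from iterating the contraction-carrying branches of $T_m$. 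The cross-section to $\Sigma^+$ is exactly the Cantor set of limit points of these nested intervals, which has Hausdorff dimension zero by the same estimates used for $C_s$. The repeller $\Sigma^-$ comes symmetrically, and the claim that the basin of $\Sigma^+$ is the complement of $\Sigma^-$ follows from the dichotomy that every non-critical forward orbit of slope $m$ is eventually trapped by a contracting branch.

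The main obstacle I foresee is the Hausdorff dimension zero statement, which requires uniform quantitative control of the renormalization contractions across all branches, not just measure-theoretic non-trivial estimates. A secondary obstacle is verifying that the lamination constructed in part (iv) truly satisfies all three conditions in the definition of attractor---in particular that its basin is precisely $X_s \setminus \Sigma^-$ and not merely a dense open subset---which will rely on understanding the shared dynamics of the slope renormalization in both time directions, likely using $\rho_s$ to relate forward and backward behavior.
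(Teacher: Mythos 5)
Your central mechanism does not exist on these surfaces. You propose to ``pull back'' an interval $J$ of terminating slopes ``through every iterate'' of a Veech-group renormalization to produce $U_s$ as an open dense set of full measure, and to get Hausdorff dimension zero from a Moran-type construction whose contraction ratios improve under successive applications of $\phi_s$. But the Veech group of $X_s$ is virtually cyclic: its action on slopes is generated by the translation $m \mapsto m + 1/s$ (from $\phi_s$), the identity on slopes (from $\rho_s$), and one reflection (from $\psi_s$). Iterating $m \mapsto m+1/s$ merely translates $J$ by multiples of $1/s$; it produces a $\tfrac1s\Z$-periodic union of finitely many intervals per fundamental domain, never a dense full-measure open set, and there is no nested hierarchy of surviving intervals on which to run a Moran estimate. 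This is precisely why the paper abandons the Fuchsian/renormalization strategy of \cite{BFG17} and instead parametrizes slopes by the staircase functions $\Delta_s^\pm$: within a single fundamental domain $[0,1/s)$ there are infinitely many components of $U_s$, one gap $\bigl(\Delta_s^-(k/n),\Delta_s^+(k/n)\bigr)$ for each rational $k/n$, and these are indexed by arithmetic of the torus, not by a group orbit.

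Consequently the two quantitative claims you flag as obstacles have no route to completion in your scheme. That $U_s$ has full measure is, in the paper, the Lambert-series identity $\sum_{n\ge1}\varphi(n)s^n/(1-s^n)=s/(1-s)^2$ applied to the explicitly computed gap lengths $s^{n-2}(1-s)^2/(1-s^n)$ (Theorem~\ref{T:delta}(vii) and (xi)); nothing in your sketch substitutes for this arithmetic input. Likewise the dimension-zero statements for $C_s$ and for the lamination cross-sections are proved via Besicovitch--Taylor gap sums using those same explicit gap sizes and the gaps $(1-s)s^{j-1}$ in the image of $\Upsilon_{s,\xi}^\pm$ (Theorem~\ref{T:hdim}); your ``uniform proportion in $J$ at each stage'' has no meaning absent a genuine renormalization tower. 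The parts of your proposal that do survive are local: the contraction/fixed-point argument for an attracting cycle once a slope is known to lie in a gap, the use of $\rho_s$ to convert attractors into repellers, and the characterization of $C_s'$ via saddle connections. But identifying \emph{which} slopes admit which behavior --- the actual content of parts (i)--(iv) --- requires the conjugacy (via $\Upsilon_{s,\xi}^\pm$ and Lemma~\ref{L:DU}) between the contracted rotation $y\mapsto\{s(y+m)\}$ and a genuine circle rotation, i.e.\ renormalization on the \emph{torus} side through continued fractions, not on $X_s$ through its affine group.
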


Each connected component of the open set $U_s$ is associated 
with a rational number. The endpoints of these connected 
components form the set $C_s'$. Likewise, the points of 
$C_s \setminus C_s'$ are associated to irrational numbers. 
These associations are made more explicit by our second 
result, which characterizes the cutting sequences of 
trajectories on $X_s$ and relates them to cutting sequences 
on the square torus $T^2 = \R^2/\Z^2$. 
%Recall that a Sturmian sequence is the cutting 
%sequence of a trajectory on $T^2$ with irrational 
%slope. (See also \S\ref{S:symbolic}.)

\begin{thm}\label{T:main2}
Let $\tau$ be a forward trajectory on $X_s^+$ with slope 
$m$ and cutting sequence $c$.
\begin{enumerate}
[label*={(\roman*)}]
\item If $m \in U_s \cup C_s'$, then there exists 
$k/n \in \Q$ such that $c$ is eventually the same as 
the cutting sequence for a trajectory on $T^2$ having 
slope $k/n$.
\item If $m \in C_s \setminus C_s'$, then there exists 
$\xi \notin \Q$ such that $c$ is the same as the cutting 
sequence for a trajectory on $T^2$ having slope $\xi$.
\end{enumerate}
\end{thm}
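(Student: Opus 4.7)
The plan is to match cutting sequences on $X_s^+$ with those on $T^2$ using the angels' staircase $f_s$ from Section \ref{S:staircase} together with the symbolic dynamics developed in Section \ref{S:symbolic}. Since $X_s^+$ is forward-stable, $c$ lies in the two-letter alphabet $\{A, B\}$, and torus cutting sequences similarly use two letters (for horizontal and vertical edges). The sequence $c$ is determined by the itinerary of the first-return map $T_m$ of the slope-$m$ flow to a horizontal transversal (say, the bottom edge of $R_s^+$): between consecutive crossings of edge $A$, the trajectory crosses $B$ some number $k_i \ge 0$ of times, and $c$ takes the form $B^{k_1} A B^{k_2} A \cdots$. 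My goal is to match $(k_i)$ with the analogous run-length sequence for a slope-$\xi$ torus trajectory.

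I expect the main tool from Section \ref{S:staircase} to be a topological semi-conjugacy between $T_m$ and a rigid rotation $R_\alpha$ of $\R/\Z$, where $\alpha$ depends monotonically on $m$. The assignment $m \mapsto \alpha$ is the angels' staircase: locally constant (with rational value) on each component of $U_s$, and a homeomorphism from $C_s \setminus C_s'$ onto the irrationals. On the torus, the first return of the slope-$\xi$ flow to the horizontal edge is the rotation by $1/\xi \pmod 1$; setting $\xi = f_s(m)$ so that $\alpha = 1/\xi \pmod 1$ identifies the two dynamical systems up to the semi-conjugacy, and the run-length sequences $(k_i)$ agree.

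The two cases of the theorem then follow. In case (ii), $\alpha$ is irrational and the itinerary is Sturmian; the initial point of $\tau$ maps to a unique basepoint on $\R/\Z$ whose $R_\alpha$-orbit produces the same two-letter sequence as a slope-$\xi$ trajectory starting from the corresponding point on $T^2$, so $c$ equals the torus cutting sequence letter by letter. In case (i) with $m \in U_s$, $\alpha = k/n$ is rational and $T_m$ has an attracting periodic orbit; after a finite transient, $\tau$ enters the attracting cycle $\Sigma^+$ from Theorem \ref{T:main1}(i), whose periodic cutting sequence matches the torus cutting sequence of slope $k/n$. For $m \in C_s'$, Theorem \ref{T:main1}(iii) gives a saddle connection $\tau_m$ to which $\tau$ is asymptotic, and the cutting sequence of $\tau_m$ is again eventually periodic with the matching pattern, by direct computation at the endpoints of the components of $U_s$.

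The main obstacle is establishing the semi-conjugacy between $T_m$ and $R_\alpha$: the homothety gluings introduce genuine scaling into $T_m$, making it an affine (rather than isometric) interval exchange, so the standard first-return arguments for translation surfaces do not apply directly. I would handle this via renormalization using the Dehn twist $\phi_s$, whose derivative $\left(\begin{smallmatrix} 1 & 0 \\ 1/s & 1 \end{smallmatrix}\right)$ shifts slopes by $1/s$ and induces a substitution on cutting sequences analogous to a step of a continued-fraction expansion on $T^2$. Iterating $\phi_s$, tracking the substitutions, and taking a limit should assemble $f_s$ explicitly while simultaneously establishing the semi-conjugacy and the rational/irrational dichotomy in $\alpha$ that underlies the two cases of the theorem.
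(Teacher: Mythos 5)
Your high-level picture is the right one, and it matches the paper's in outline: the first return of the slope-$m$ flow to a transversal in $X_s^+$ is a contracted rotation, it is semiconjugate to a rigid rotation $R_\alpha$ via a staircase function, the parameter map $m \mapsto \alpha$ is locally constant with rational values on $U_s$ and injective with irrational values on $C_s \setminus C_s'$, and the rational/irrational dichotomy for $\alpha$ yields the eventually periodic/Sturmian dichotomy for $c$. But the mechanism you propose for actually producing the semiconjugacy --- renormalization by iterating the Dehn twist $\phi_s$ and ``tracking the substitutions'' as in a continued-fraction algorithm --- cannot work. The derivative of $\phi_s$ is $\left(\begin{smallmatrix} 1 & 0 \\ 1/s & 1 \end{smallmatrix}\right)$, so iterating it only implements the \emph{integer-part} step $\xi \mapsto \xi + 1$ (equivalently $m \mapsto m + 1/s$); to run a continued-fraction renormalization you also need an affine automorphism realizing the inversion step of the Gauss map, and $X_s$ has none: its Veech group is virtually cyclic (generated up to finite index by $\der\phi_s$ together with the involutions $\rho_s$, $\psi_s$, all of which act on slopes by $m \mapsto \pm m + c$). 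The paper points out exactly this obstruction in \S\ref{SS:remarks} and circumvents it by writing down the conjugating objects explicitly as series: the parameter function $\Delta_s^\pm$ and the dynamical function $\Upsilon_{s,\xi}^\pm$, linked by the functional equation of Lemma~\ref{L:DU}, which is what actually furnishes the semiconjugacy (Theorem~\ref{T:cantor}).

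A second, related gap is your assertion that once the semiconjugacy is in hand ``the run-length sequences $(k_i)$ agree.'' The semiconjugacy is a devil's staircase that collapses a full-measure set of gaps, so matching itineraries requires verifying that the partition of the transversal induced by the edges $A$ and $B$ is carried to the correct partition for the rotation, including at the gap endpoints; this is not automatic and is where the bulk of the paper's work lies (the stacking-diagram development of \S\ref{S:symbolic}, with the continued-fraction estimates of \S\ref{Acrossing}--\S\ref{Bcrossing} showing the developed trajectory threads the correct sequence of boxes, plus the explicit treatment of the saddle connections bounding each cylinder in the rational case). As written, your argument assumes the conclusion of Theorem~\ref{T:cutting_sequences} rather than proving it. The remaining assembly in your last paragraph of case (i) and case (ii) from Theorem~\ref{T:main1} is fine and is essentially what the paper does in \S\ref{S:proofs}.
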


\subsection{Remarks on results}\label{SS:remarks}

In the language of \cite{Liousse}, 
Theorem~\ref{T:main1}(i) says that the directions in $U_s$ 
(excluding those containing saddle connections) are 
``dynamically trivial''. The main result of \cite{Liousse} 
is that dynamically trivial foliations form an open dense 
subset, with respect to the $C^\infty$ topology, of 
oriented affine foliations having a fixed type of singular 
set. It is therefore not surprising that almost all 
directions on $X_s$ exhibit this behavior. 

When $s = 1/2$, $X_s$ is affinely equivalent to 
the ``two-chamber surface'' that appears in \cite{DFG16}. 
In that paper, the authors attempt to analyze the behavior 
of linear trajectories on the two-chamber surface, but they 
neglect the directions covered by Theorem~\ref{T:main1}(iv). 
Even though according to Theorem~\ref{T:main1}(ii) the set 
of these directions has Hausdorff dimension zero, their 
behavior is sufficiently interesting to merit thorough 
consideration, especially in light of the genericity result 
mentioned in the previous paragraph.

Several of the results of Theorem~\ref{T:main1} are similar 
to those obtained in \cite{BFG17} for another surface (the 
``disco surface''), but our methods are quite different. 
Principally, each of our surfaces has a relatively small 
Veech group (it is virtually cyclic, as correctly observed in 
\cite{DFG16} for the case $s = 1/2$), and so we cannot make 
extensive use of the theory of Fuchsian groups as is done 
in \cite{BFG17}. Instead, we relate linear trajectories on 
$X_s$ directly to those on the ordinary square torus $T^2$ 
by means of an ``angels' staircase'' function (see 
\S\ref{S:staircase}). We also make use of the theory 
of continued fractions (see \S\ref{S:symbolic}), which is 
akin to the use of Rauzy--Veech induction in \cite{BFG17}, 
but again our approach has a substantially different flavor.

Note that Theorem~\ref{T:main1} implies the following.

\begin{cor}\label{C:nodense}
No linear trajectory is dense in $X_s$ or in any 
subsurface of $X_s$.
\end{cor}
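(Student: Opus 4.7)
The plan is a case analysis on the slope $m$ using the trichotomy $\mathbb{RP}^1 = U_s \sqcup C_s' \sqcup (C_s \setminus C_s')$ supplied by Theorem \ref{T:main1}. In each case I would exhibit a closed, nowhere-dense subset of $X_s$ containing the closure of any linear trajectory with slope $m$; this implies the closure has empty interior, which rules out density in $X_s$ or in any open subsurface (both of which have nonempty interior).

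For $m \in U_s$, part (i) supplies an attracting cycle $\Sigma^+$ and a repelling cycle $\Sigma^-$ with dense basins. A trajectory $\tau$ lying in these basins satisfies $\overline{\tau} \subseteq \tau \cup \Sigma^+ \cup \Sigma^-$, which is $1$-dimensional. Trajectories outside the basins must lie on $\Sigma^\pm$ themselves or on boundary saddle connections of the affine cylinders containing them, so they too have $1$-dimensional closures. For $m \in C_s'$, part (iii) asserts that $\tau$ is asymptotic to the saddle connection $\tau_m$, so $\overline{\tau} \subseteq \tau \cup \tau_m$ is again $1$-dimensional. For $m \in C_s \setminus C_s'$, part (iv) supplies laminations $\Sigma^\pm$ which, as linear laminations, are nowhere dense in $X_s \setminus Z$ by definition; any trajectory $\tau$ with slope $m$ either lies in one of them (in which case $\overline{\tau}$ is contained in the lamination) or accumulates on $\Sigma^+$ in forward time and on $\Sigma^-$ in backward time, placing $\overline{\tau}$ in the union $\tau \cup \Sigma^+ \cup \Sigma^-$ of three nowhere-dense closed sets.

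The entire argument is bookkeeping once Theorem \ref{T:main1} is in hand; no step presents a genuine obstacle. The only mild subtlety is verifying that trajectories in direction $m \in U_s$ which happen to miss the basins of $\Sigma^\pm$ still have $1$-dimensional closures, but this follows immediately from the affine cylinder structure recalled in the subsection on cylinders: such exceptional trajectories are confined to the boundary saddle connections or to the cycles $\Sigma^\pm$ themselves.
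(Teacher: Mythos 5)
Your proposal is correct and matches the paper's intent exactly: the paper offers no separate proof, simply remarking that Theorem~\ref{T:main1} implies the corollary, and your case analysis over $U_s$, $C_s'$, and $C_s\setminus C_s'$ (plus the completely periodic vertical direction) is precisely the bookkeeping that remark leaves to the reader. In every case the closure of a trajectory is contained in a finite union of nowhere-dense closed sets, so it cannot contain the (nonempty) interior of any subsurface.
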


Corollary~\ref{C:nodense} contrasts with Conjecture~1 
of \cite{BFG17}, which states that on the disco surface 
some directions are minimal. This difference in behavior is 
likely due to the fact that $X_s$ has only one completely 
periodic direction ($m = \infty$), while the disco surface 
has many completely periodic directions because its 
Veech group is non-elementary.

In \S\ref{SS:AIET} we identify the piecewise-affine map 
$S^1 \to S^1$, associated to a direction $m \ne \infty$, 
which is induced by the first return of linear trajectories in 
the direction $m$ to a fixed vertical segment. The properties 
of this map provide the basis for several of our results. 
This map has been studied previously; see for example 
\cite{B93,BC99,Coutinho,Cetal,DH87,JansonOberg,LN18,V87}. 
Some of the results in \S\ref{S:staircase} reproduce parts of 
those earlier works. For the benefit of the reader, we provide 
full proofs of the properties we require, indicating overlaps 
where appropriate. A benefit of our approach is that the maps 
$S^1 \to S^1$ for various $m$ are realized simultaneously as 
sections of geodesic flow on a single surface $X_s$, thereby 
providing a unifying picture.

\subsection{Floor, ceiling, fractional part}

We use $\lfloor x \rfloor$ to denote the floor 
function, which returns the greatest integer not 
greater than $x$, and $\lceil x \rceil$ to denote 
the ceiling function, which returns the smallest 
integer not smaller than $x$. 
We also use $\{x\} = x - \lfloor x \rfloor$ to 
denote the fractional part of $x$. The function 
$x \mapsto \{x\}$ sends $\R$ to $[0,1)$ and satisfies 
the equation $e^{2\pi ix} = e^{2\pi i\{x\}}$, hence 
we will often treat $[0,1)$ as a coordinate on 
the circle $S^1$.

\subsection{Affine interval exchange transformations}
\label{SS:AIET}

We use \emph{affine interval exchange transformation} 
(AIET for short) to mean an injective piecewise-affine 
function $J \to J$, where $J \subset \R$ is a bounded 
interval (cf.\ \cite{MMY10}, where AIETs are assumed to 
be bijections). The prototypical example is a 
\emph{circle rotation} with parameter $\xi \in \R$, 
defined by $z \mapsto e^{2\pi i \xi} z$ on the circle 
$|z| = 1$ or by $x \mapsto \{x + \xi\}$ on the interval 
$[0,1)$. This is the first return map induced on a vertical 
segment of unit length in the square torus $T^2$ by the 
linear flow in the direction of slope $\xi$. 

We will primarily be interested in the AIET induced 
by the flow in one of the ``stable subsurfaces'' 
$X_s^\pm$ of $X_s$. Let $J^+ = J^- = [0,1)$; identify 
$J^+$ with the edge $A$ and $J^-$ with the edge $C$ in 
$X_s$ (cf.\ Figure~\ref{F:Xs}), each having coordinate 
$y$, running from bottom to top. When a slope $m \in \R$ 
is given, the forward linear flow in the direction $m$ 
on $X_s^+$ induces the AIET 
\[
y \mapsto \{s(y + m)\}
\]
on $J^+$. This is because the $y$-coordinate is first 
scaled by a factor of $s$ due to the identifications 
of the sides of $R_s^+$; then $y$ increases by $sm$ 
as a trajectory moves across the rectangle and the 
$x$-coordinate increases by $s$; then we take the 
fractional part of $sy + sm$ to return to the right 
side of the rectangle $R_s^+$. In \cite{Cetal,LN18}, 
this AIET is called a \emph{contracted rotation}.

In order to obtain from the linear flow on $X_s$ an 
AIET that is a bijection, we also need to consider 
the interval $J^-$. Unlike $J^+$, this interval 
is not invariant under the first-return map of the 
forward linear flow. The full first-return map on 
$J^+ \sqcup J^-$ of the linear flow in the direction 
$m$ is given by 
\[
y \mapsto 
\begin{cases}
\{s(y + m)\} \in J^+ 
& \text{if $y \in J^+$} \\
\{y + sm\} + \{s(1+m)\} \in J^+ 
& \text{if $y \in J^-$ and $\{y + sm\} < 1 - s$} \\
\{y/s+m\} \in J^-
& \text{if $y \in J^-$ and $1 - s \le \{y + sm\} < 1$}
\end{cases}
\]

\subsection{Infinite series}

Several of our results depend on known sums of infinite 
series. We will frequently make use of the familiar 
geometric series 
\begin{equation}\label{Eq:geometric}
\sum_{j=1}^\infty s^j = \frac{s}{1 - s}, 
\qquad |s| < 1.
\end{equation}
Closely related is the power series for the 
Koebe function 
\begin{equation}\label{Eq:koebe}
\sum_{j=1}^\infty js^j 
= s\frac{d}{ds} \frac{s}{1-s} 
= \frac{s}{(1 - s)^2}, 
\qquad |s| < 1.
\end{equation}
We will also need the sum of a certain series involving 
the Euler totient function $\varphi$:
\[
\varphi(n) = \#\{k : 1 \le k \le n,\; \gcd(k,n) = 1\}.
\]
The Lambert series for $\varphi$ is 
\begin{equation}\label{Eq:lambert}
\sum_{n=1}^\infty \frac{\varphi(n) s^n}{1 - s^n} 
= \frac{s}{(1 - s)^2}, 
\qquad |s| < 1.
\end{equation}
This formula was proved by Liouville \cite{Liouville}, 
and we present his proof here. Not only is it brief and 
elegant, it employs a technique of ``regrouping powers'' 
that we will often find useful. Start with Gauss's identity 
\[
j = \sum_{n\mid j} \varphi(n).
\]
Combining this with Equations \eqref{Eq:geometric} and 
\eqref{Eq:koebe}, we have 
\[
\frac{s}{(1 - s)^2} 
= \sum_{j=1}^\infty \bigg(\sum_{n\mid j} \varphi(n)\bigg) s^j 
= \sum_{j=1}^\infty \sum_{n\mid j} \big(\varphi(n) s^j\big)
= \sum_{n=1}^\infty \varphi(n) \sum_{\ell=1}^\infty s^{n\ell}
= \sum_{n=1}^\infty \varphi(n) \frac{s^n}{1 - s^n},
\]
where we have used the fact that $n\mid j$ if and only if 
$j = n\ell$ for some $\ell \ge 1$.

\subsection{Continued fractions}\label{Continued fractions}

Here we recall some basic facts about continued 
fractions. A reference is \cite{Khinchin}.

A \emph{finite continued fraction} is an expression of the 
form
\[
[a_0;a_1,a_2,\dots,a_n] = 
a_0 + 
\cfrac{1}
      {a_1 + \cfrac{1}
             {a_2 + \cfrac{1}
                    {\ddots + \cfrac{1}{a_n}}}}
\]
where $a_0 \in \Z$ and $a_1, \dots a_n \in \Z_+$. (In some 
sources, this is called a \emph{simple} continued fraction 
because every numerator is $1$; we will not be concerned 
with other types of continued fractions.) Every rational 
number can be written as a finite continued fraction, and 
the expression is unique provided $a_n \ne 1$. An 
\emph{infinite continued fraction} is a limit of finite 
continued fractions: 
\[
[a_0;a_1,a_2,a_3,\dots] 
= \lim_{n\to\infty} [a_0;a_1,\dots,a_n].
\]
Every irrational number can be written as an infinite 
continued fraction in a unique way.

In both the rational and irrational cases, the process of 
finding the continued fraction of a real number $x \in \R$ 
is the same. First set $x_0 = x$ and 
$a_0 = \lfloor x_0 \rfloor$, then compute $x_i$ and 
$a_i$ inductively: $x_{i+1} = 1/(x_i - a_i)$, $a_{i+1} = 
\lfloor x_{i+1} \rfloor$. If at some point $a_i = x_i$, 
then the process terminates; this occurs if and only if 
$x \in \Q$. Otherwise, the process continues forever. 
The terms of the sequence $a_i$ are called the 
\emph{partial quotients} of $x$. 

The finite continued fraction 
\[
\frac{P_i}{Q_i} = [a_0;a_1\dots,a_i]
\]
is called the \emph{$i$th convergent} of $x$. The 
numerators and denominators of the convergents can 
be computed recursively from the partial quotients 
as follows:
\begin{alignat}{5}
\label{Eq:Pconvrec}
P_{-1} &= 1 \hspace{0.5in}&
P_0 &= a_0 \hspace{0.5in}&
P_i &= a_i P_{i-1} + P_{i-2}, \\
\label{Eq:Qconvrec}
Q_{-1} &= 0 \hspace{0.5in}&
Q_0 &= 1 \hspace{0.5in}&
Q_i &= a_i Q_{i-1} + Q_{i-2}.
\end{alignat}
The convergents of $x$ alternate whether they are greater 
or less than $x$: 
\begin{equation}\label{Eq:convergentinequality}
\frac{P_{2i}}{Q_{2i}} \le x \le \frac{P_{2i+1}}{Q_{2i+1}}
\qquad\text{for all $i \ge 0$.}
\end{equation}
Of course, when $x \notin \Q$, both inequalities are always 
strict. We also have the inequality 
\begin{equation}\label{Eq:nearapproach}
|Q_i x - P_i| < \frac{1}{Q_{i+1}},
\end{equation}
and for any other rational number $P/Q$ with $Q \le Q_i$, 
we have $|Q_i x - P_i| < |Qx - P|$.

The \emph{intermediate fractions} of $x$ are rational 
numbers of the form 
\begin{equation}\label{Eq:intermediatefrac}
\frac{P_{i-2} + \alpha P_{i-1}}{Q_{i-2} + \alpha Q_{i-1}}, 
\qquad 0 \le \alpha \le a_i.
\end{equation}
In particular, convergents are intermediate fractions: 
when $\alpha = 0$ we get $P_{i-2}/Q_{i-2}$, and when 
$\alpha = a_i$ we get $P_i/Q_i$. The inequality 
\eqref{Eq:convergentinequality} implies that the sequence 
\eqref{Eq:intermediatefrac} is increasing with $\alpha$ 
when $i$ is even, decreasing when $i$ is odd.

Based on the preceding, we get information about certain 
integer multiples of $x$. Suppose $x > 0$. The inequalities 
\eqref{Eq:convergentinequality} and \eqref{Eq:nearapproach} 
imply that $\{Q_i x\}$ is close to $0$ when $i$ is even 
and close to $1$ when $i$ is odd. Moreover, since the 
intermediate fractions \eqref{Eq:intermediatefrac} lie 
between $P_{i-2}/Q_{i-2}$ and $P_i/Q_i$, the same is true 
for $\{(Q_{i-2} + \alpha Q_{i-1})x\}$ when 
$0 \le \alpha \le a_i$.  

We define a \emph{near approach to $0$} to be a remainder 
$\{jx\}$ such that $\{jx\}<\{j'x\}$ for all $j'<j$, and a 
\emph{near approach to $1$} to be a remainder $\{jx\}$ such 
that $\{jx\}>\{j'x\}$ for all $j'<j$.  We note that 
$j=Q_{i-2}+\alpha Q_{i-1}$ gives the near approaches $\{jx\}$ 
to $0$ when $i$ is even, and the near approaches to $1$ when 
$i$ is odd.  We may also conclude that, for all $i \ge 1$ and 
for all $0 \le \alpha \le a_i$,
\begin{equation}\label{int_frac_floor}
\lfloor (Q_{i-2} + \alpha Q_{i-1})x \rfloor = 
\begin{cases}
P_{i-2} + \alpha P_{i-1} & \text{when $i$ is even,} \\
P_{i-2} + \alpha P_{i-1} - 1 & \text{when $i$ is odd.}
\end{cases}\qquad
\end{equation}

The following lemma will be useful in Section~\ref{S:symbolic}.

\begin{lemma}\label{convergent_formulas}
Given $0<\xi<1$, let $[0;a_1,a_2,a_3,\dots]$ be its 
(finite or infinite) continued fraction, so that 
$\frac{1}{\xi+1}=[0;1,a_1,a_2,\dots]$ and 
$\frac{\xi}{\xi+1}=[0;a_1+1,a_2,a_3,\dots]$. 
If $\frac{P_i}{Q_i}$ is the $i^{\text{th}}$ convergent of 
$\xi$, $\frac{P'_i}{Q'_i}$ the $i^{\text{th}}$ convergent 
of $\frac{1}{\xi+1}$, and $\frac{P''_i}{Q''_i}$ the 
$i^{\text{th}}$ convergent of $\frac{\xi}{\xi+1}$, then 
\begin{align*}
& P'_i=Q_{i-1} 
&& P''_i=P_i\\
& Q'_i=P_{i-1}+Q_{i-1} 
&& Q''_i=P_i+Q_i
\end{align*}
for all $i\ge 0$.
\end{lemma}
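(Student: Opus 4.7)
The plan is to prove all four identities by simultaneous induction on $i$, using the recursion \eqref{Eq:Pconvrec}, \eqref{Eq:Qconvrec} for the numerators and denominators of convergents. The only real content is that once one reads off the partial quotients of $\tfrac{1}{\xi+1}$ and $\tfrac{\xi}{\xi+1}$ from the stated continued fraction expansions, the recursion does essentially all the work.

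First I would record the partial quotients. From the given expansions, $\tfrac{1}{\xi+1}$ has partial quotients $a'_0 = 0$, $a'_1 = 1$, and $a'_i = a_{i-1}$ for $i \ge 2$; and $\tfrac{\xi}{\xi+1}$ has $a''_0 = 0$, $a''_1 = a_1 + 1$, and $a''_i = a_i$ for $i \ge 2$. (One can verify these expansions in a line by the Euclidean algorithm, using $0 < \xi < 1$.) Next I would dispatch the base cases $i = 0, 1$ directly from $P_{-1} = 1$, $Q_{-1} = 0$, $P_0 = 0$, $Q_0 = 1$, $P_1 = a_1$, $Q_1 = 1$: all eight required identities reduce to the trivial checks $P'_0 = 0 = Q_{-1}$, $Q'_0 = 1 = P_{-1} + Q_{-1}$, $P'_1 = 1 = Q_0$, $Q'_1 = 1 = P_0 + Q_0$, and similarly for the double-primed sequence.

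For the inductive step with $i \ge 2$, substituting $a'_i = a_{i-1}$ into the recursion for the primed sequence and invoking the inductive hypothesis gives
\[
P'_i = a_{i-1} P'_{i-1} + P'_{i-2} = a_{i-1} Q_{i-2} + Q_{i-3} = Q_{i-1},
\]
\[
Q'_i = a_{i-1} Q'_{i-1} + Q'_{i-2} = a_{i-1}(P_{i-2} + Q_{i-2}) + (P_{i-3} + Q_{i-3}) = P_{i-1} + Q_{i-1},
\]
the last equalities in each line being \eqref{Eq:Pconvrec}, \eqref{Eq:Qconvrec} applied to $\xi$. The double-primed case is the same idea but cleaner: since $a''_i = a_i$ for $i \ge 2$ with no index shift, the recursion gives $P''_i = a_i P_{i-1} + P_{i-2} = P_i$ and $Q''_i = a_i(P_{i-1} + Q_{i-1}) + (P_{i-2} + Q_{i-2}) = P_i + Q_i$ directly.

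There is no real obstacle; the one point requiring care is the off-by-one index shift in the primed case, which is forced by the extra initial partial quotient $a'_1 = 1$ in the continued fraction of $\tfrac{1}{\xi+1}$. Keeping that shift consistent at the base-case level (where $P'_1 = 1 = Q_0$, not $Q_1$) ensures the induction runs without adjustment thereafter.
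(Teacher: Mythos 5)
Your proof is correct and follows essentially the same route as the paper's: read off the partial quotients of $\tfrac{1}{\xi+1}$ and $\tfrac{\xi}{\xi+1}$ from the stated expansions, verify $i=0,1$ by hand, and run the convergent recursion together with the inductive hypothesis (the paper does the primed case in detail and leaves the double-primed case as ``similar,'' which you carry out explicitly). One small slip in your base data: since $a_0=0$ we have $P_1 = a_1 P_0 + P_{-1} = 1$ and $Q_1 = a_1 Q_0 + Q_{-1} = a_1$, not $P_1 = a_1$, $Q_1 = 1$ as written; with the corrected values the double-primed check at $i=1$ reads $P''_1 = 1 = P_1$ and $Q''_1 = a_1+1 = P_1+Q_1$, exactly as required, whereas with your stated values that check would appear to fail. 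Everything else goes through as written.
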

\begin{proof}
By definition, $P'_{-1}=1$, and since $\frac{1}{\xi+1}<1$, 
$P'_0=0$.  Using the recursive formula \eqref{Eq:Pconvrec}, 
we see that $P'_1=1\cdot P'_0+P'_{-1}=1$.  Again, by definition, 
$Q_{-1}=0$ and $Q_0=1$.  We then have $P'_0=Q_{-1}$ and 
$P'_1=Q_0$.  Let $P'_1$ be the base case, and note that for 
$i\le 1$, $P'_i=Q_{i-1}$.  For the inductive step, suppose that 
$P'_{i-2}=Q_{i-3}$ and $P'_{i-1}=Q_{i-2}$.  Note that for all 
$i>1$, the $i^{\text{th}}$ position of the continued fraction 
of $\frac{1}{\xi+1}$ is $a_{i-1}$.  Equation~\eqref{Eq:Pconvrec} 
then tells us that $P'_{i}=a_{i-1}P'_{i-1}+P'_{i-2}$ for $i>1$. 
By our hypothesis and \eqref{Eq:Qconvrec}, this is equal to 
$Q_{i-1}=a_{i-1}Q_{i-2}+Q_{i-3}$.

Next, we aim to show that $Q'_i=P_{i-1}+Q_{i-1}$ for all 
$i\ge 0$.  By definition, $Q'_{-1}=0$ and $Q'_0=1$, and by 
\eqref{Eq:Qconvrec}, $Q'_1=1\cdot Q'_0+Q'_{-1}=1$.  Also by 
definition, $Q_{-1}=0$, $Q_0=1$, $P_{-1}=1$, and since 
$\frac{1}{\xi+1}<0$, $P_0=0$.  Let $Q'_1$ be the base case, 
and note that for all $i\le 1$, $Q'_i=P_{i-1}+Q_{i-1}$ because 
$Q'_{0}=P_{-1}+Q_{-1}=1$ and $Q'_1=P_{0}+Q_{0}=1$.  For the 
inductive step, suppose that $Q'_{i-2}=P_{i-3}+Q_{i-3}$ and 
$Q'_{i-1}=P_{i-2}+Q_{i-2}$.  As in the previous paragraph, we 
know that the $i^{\text{th}}$ position of the continued fraction 
of $\frac{1}{\xi+1}$ is $a_{i-1}$ for any $i>1$, so we have the 
recursive formula $Q'_{i}=a_{i-1}Q'_{i-1}+Q'_{i-2}$.  Using our 
hypothesis, this becomes
\[
a_{i-1}(P_{i-2}+Q_{i-2})+(P_{i-3}+Q_{i-3})
=(a_{i-1}P_{i-2}+P_{i-3})+(a_{i-1}Q_{i-2}+Q_{i-3})
=P_{i-1}+Q_{i-1}.
\]

A similar argument shows that $P''_i=P_i$ and $Q''_i=P_i+Q_i$. 
\end{proof}

\section{Angels' staircases}\label{S:staircase}

In this section we define two kinds of functions that 
will be essential to our study of linear trajectories 
on the surface $X_s$. One kind will be a ``parameter function'' 
that determines the type of behavior occurring in each 
direction on $X_s$. The other kind will be a ``dynamical 
function'' that determines a minimal invariant set in that 
direction. The connection between these two kinds of 
functions, given in Lemma~\ref{L:DU}, makes them useful for 
studying the ``contracting rotation'' that was defined in 
\S\ref{SS:AIET}.

As we shall see, each of these functions, of both kinds 
(with countably many exceptions among the dynamical functions), 
is strictly increasing and has a dense set of discontinuities. 
We call the graph of such a function an \emph{angels' 
staircase}. This name derives from the notion of an 
``inverted devil's staircase.'' 

%The inverse of an angels' staircase function can be extended 
%to a function on all of $\R$ whose graph is a devil's staircase 
%in the usual sense---that is, a continuous non-constant function 
%whose derivative exists and is equal to $0$ almost everywhere.

Given $s \in (0,1)$, we define the following two 
functions for all $x \in \R$:
\begin{align*}
\Delta_s^-(x) 
&= \left(\frac{1-s}{s}\right)^{\!2} 
   \sum_{j=1}^\infty s^j \lceil jx \rceil \\
\Delta_s^+(x) 
&= \frac{1-s}{s}
   + \left(\frac{1-s}{s}\right)^{\!2} 
     \sum_{j=1}^\infty s^j \lfloor jx \rfloor
\end{align*}
See Figure~\ref{F:staircase} for an example. 

\begin{figure}[h]
\includegraphics[scale=.275]{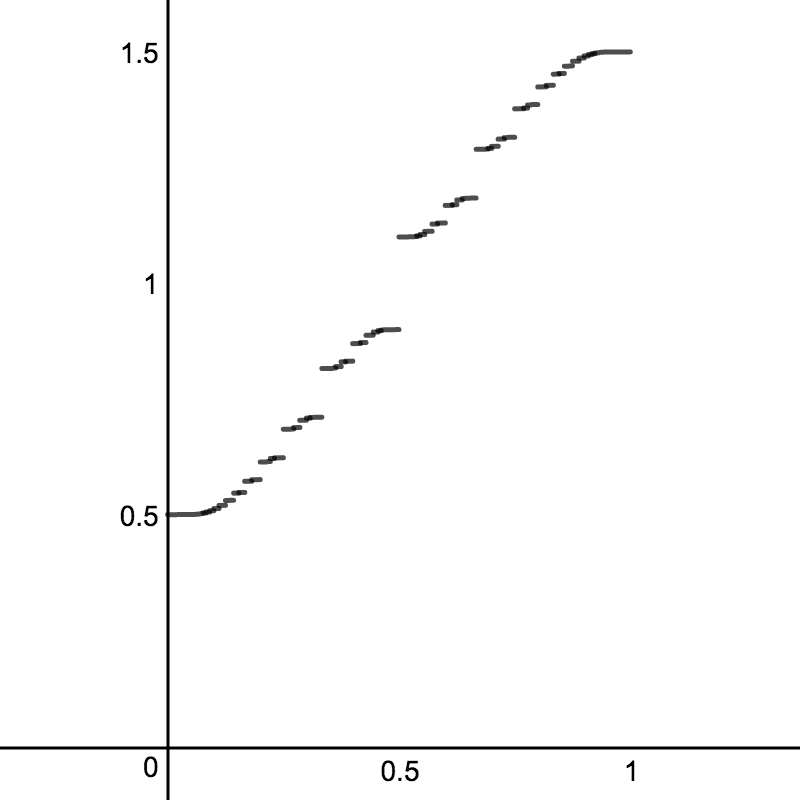}
\caption{The graph of $\Delta_s^-(x)$ when 
$s = 2/3$, drawn only for $0 \le x \le 1$. 
The graph of $\Delta_s^+(x)$ appears the same; 
the differences occur only at the jumps, which are 
dense but countable.}
\label{F:staircase}
\end{figure}

These functions are present implicitly in 
\cite{B93,BC99,Coutinho,DH87} and explicitly in 
\cite{JansonOberg}. The function $\Delta_s^+$ appears 
in \cite[\S6]{Cetal}, and in \cite{LN18} it is identified, 
with the roles of the parameter and the independent variable 
switched, as a ``Hecke--Mahler series'' (see also 
\cite[\S2]{KuipNeid}).

In Theorem~\ref{T:delta} we collect several useful 
properties of $\Delta_s^\pm$. Some parts of 
Theorem~\ref{T:delta} are direct generalizations of 
properties stated in \cite[Theorem~6.3]{BC02}, which 
assumes $s = 1/2$.

\begin{thm}\label{T:delta}
The functions $\Delta_s^-$ and $\Delta_s^+$ 
have the following properties:
\begin{enumerate}
[label*={(\roman*)}]
\item $\Delta_s^\pm$ is strictly increasing on $\R$.
\item For all $x \in \R$, $\Delta_s^\pm(x+1) = 
\Delta_s^\pm(x) + 1/s$.
\item $\Delta_s^-$ is left continuous: 
$\lim_{u\to x^-} \Delta_s^-(u) = \Delta_s^-(x)$ for all $x \in \R$.
\item $\Delta_s^+$ is right continuous: 
$\lim_{u\to x^+} \Delta_s^+(u) = \Delta_s^+(x)$ for all $x \in \R$.
\item If $x \notin \Q$, then $\Delta_s^-(x) = \Delta_s^+(x)$. 
\item If $x \notin \Q$ and $x > 0$, then 
\[
\Delta_s^-(x) = \Delta_s^+(x)
= \frac{1-s}{s} 
  \sum_{\ell=0}^\infty s^{\lfloor \ell/x \rfloor}.
\]
\item If $k$ and $n$ are integers such that $n > 0$ and 
$\gcd(k,n) = 1$, then 
\begin{align*}
\Delta_s^-(k/n) 
&= \frac{ks^{n-2}(1 - s)}{1 - s^n}
   + \frac{(1 - s)^2}{1 - s^n} 
     \sum_{r=1}^{n-1}s^{r-2}\left\lceil\frac{rk}{n}\right\rceil\\
\Delta_s^+(k/n) 
&= \frac{ks^{n-2}(1-s)}{1-s^n}
   + \frac{(1-s)^2}{1-s^n}
     \sum_{r=1}^{n-1} s^{r-2} \left\lfloor \frac{rk}{n} \right\rfloor 
   + \frac{1 - s}{s}
\end{align*}
and therefore $\Delta_s^+(k/n) - \Delta_s^-(k/n) = 
s^{n-2}(1-s)^2/(1-s^n)$. 
\item If $k$ and $n$ are positive integers 
such that $\gcd(k,n) = 1$, then 
\begin{align*}
\Delta_s^-(k/n)
&= \frac{1-s}{s}
   \sum_{\ell=0}^{\infty}s^{\left\lfloor\ell n/k\right\rfloor}\\
\Delta_s^+(k/n)
%&= \left(\frac{1-s}{s}\right)^{\!2}\frac{s^n}{1-s^n}
&= \frac{s^{n-2}(1-s)^2}{1-s^n}
   + \frac{1-s}{s}
   \sum_{\ell=0}^{\infty}s^{\left\lfloor\ell n/k\right\rfloor}
\end{align*}
\item If $k$ and $n$ are positive integers such that $k/n<1$ 
and $\gcd(k,n)=1$, then
\begin{align*}
\Delta_s^-(k/n)
&= \frac{(s-s^n)(1-s)}{s^2(1-s^n)}
   + \frac{s^n(1-s)}{s^2(1-s^n)}
   \sum_{\ell=1}^{k}(1/s)^{\left\lfloor (\ell-1)n/k\right\rfloor}\\
\Delta_s^+(k/n)
&= \frac{1-s}{s} 
   + \frac{s^n(1-s)}{s^2(1-s^n)}
   \sum_{\ell=1}^{k}(1/s)^{\left\lfloor (\ell-1)n/k\right\rfloor}.
\end{align*}
\item The set of discontinuities of $\Delta_s^\pm$ is 
precisely $\Q$.
\item The closure of the image of $\Delta_s^\pm$ is a 
Cantor set having Lebesgue measure $0$.
\item For all $x \in \R$, $\lim_{s\to1^-}\Delta_s^\pm(x) = x$.
\end{enumerate}
\end{thm}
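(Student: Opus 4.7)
The plan is to work through the twelve claims in a logical order that lets each depend on the earlier ones and on the series identities \eqref{Eq:geometric}, \eqref{Eq:koebe}, \eqref{Eq:lambert} already recalled above. Convergence of both series is automatic from $|\lceil jx\rceil|, |\lfloor jx\rfloor| = O(j)$ together with $|s| < 1$. I will then establish the elementary items (i)--(v) as a block: the quasi-periodicity (ii) reduces to $\lceil j(x+1)\rceil = \lceil jx\rceil + j$ evaluated against \eqref{Eq:koebe}; monotonicity (i) follows because for $x_1 < x_2$ every term of the series is weakly increasing in $x$ and at least one strictly increases once $j(x_2 - x_1) > 1$; the one-sided continuity claims (iii) and (iv) come from pointwise left-continuity of $\lceil \cdot \rceil$ and right-continuity of $\lfloor \cdot \rfloor$, interchanged with the sum by monotone convergence on the approaching side; and (v) is immediate from $\lceil jx \rceil - \lfloor jx \rfloor = 1$ for all $j \ge 1$ when $x$ is irrational, using \eqref{Eq:geometric} to see the resulting difference collapses to zero.

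Next I handle the explicit formulas (vi)--(ix) by ``regrouping powers,'' the same technique Liouville used for \eqref{Eq:lambert}. For (vi), I write $\lfloor jx\rfloor = \#\{\ell \ge 1 : \ell/x \le j\}$ and swap summation order to obtain $\sum_{j \ge 1} s^j \lfloor jx\rfloor = \frac{s}{1-s}\sum_{\ell \ge 1} s^{\lfloor \ell/x \rfloor}$ (where irrationality of $x$ forces $\ell/x \notin \Z$), then absorb the $\ell = 0$ term into the constant via (v). For (vii), substituting $j = qn+r$ with $0 \le r < n$ and using $\lceil (qn+r)k/n \rceil = qk + \lceil rk/n \rceil$ decouples the sum; the $q$-sum is evaluated by \eqref{Eq:geometric} and \eqref{Eq:koebe}, and the identity $\lceil rk/n\rceil - \lfloor rk/n\rfloor = 1$ for $1 \le r \le n-1$ yields the advertised difference $s^{n-2}(1-s)^2/(1-s^n)$. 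Formula (viii) uses the same regrouping as (vi) but now with $x = k/n$ rational; here the periodicity $\lfloor (\ell + k) n/k \rfloor = \lfloor \ell n/k \rfloor + n$ keeps the regrouped series geometric. Finally (ix) follows from (viii) via the reflection identity
\[
\lfloor (k-\ell)n/k \rfloor \;=\; n - 1 - \lfloor \ell n/k \rfloor \qquad (1 \le \ell \le k-1),
\]
which lets me reindex $\ell \mapsto k-\ell$ and convert powers $s^{\lfloor \cdot \rfloor}$ into powers $(1/s)^{\lfloor \cdot \rfloor}$, up to the additive constants displayed.

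For (x), I combine (v) with (iii) and (iv): at irrational $x$ both one-sided limits of $\Delta_s^-$ equal $\Delta_s^-(x) = \Delta_s^+(x)$, forcing continuity, whereas at $x = k/n$ the positive gap $s^{n-2}(1-s)^2/(1-s^n) > 0$ from (vii) prevents it. Part (xi) is then a measure computation: the image of $[0,1]$ lies in $[0,1/s]$ by (ii), and the total length of the gaps equals $\sum_{k/n \in [0,1) \cap \Q}$ of the jump sizes, which evaluates via \eqref{Eq:lambert} applied to the $\varphi(n)$ reduced fractions of denominator $n$ (plus a boundary contribution $(1-s)/s$ at $k/n = 0/1$) to exactly $1/s$. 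So the image closure has Lebesgue measure zero, and since the jump intervals are dense and accumulate on both sides of every remaining point, that closure is compact, perfect, and totally disconnected, hence a Cantor set. For (xii), I use the direct squeeze
\[
\frac{x}{s} \;\le\; \Delta_s^{\pm}(x) \;\le\; \frac{x}{s} + \frac{1-s}{s},
\]
obtained by bounding $\lceil jx\rceil$ and $\lfloor jx\rfloor$ by $jx \pm 1$ and evaluating with \eqref{Eq:geometric} and \eqref{Eq:koebe}; both ends tend to $x$ as $s \to 1^-$.

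The main obstacle will be keeping (viii)--(ix) consistent with (vi)--(vii): the regrouping trick applies in both rational and irrational cases, but when $x = k/n$ is rational the quantity $\ell n/k$ may itself be an integer, which produces off-by-one terms in the geometric sums that have to be tracked against the discrepancy $\Delta_s^+(k/n) - \Delta_s^-(k/n)$ already computed in (vii). The reflection identity $\lfloor (k-\ell)n/k \rfloor = n - 1 - \lfloor \ell n/k \rfloor$ used in (ix) is the key combinatorial step; everything else is bookkeeping with the three named series.
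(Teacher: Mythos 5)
Your plan is correct, and for most parts it runs along the same lines as the paper's proof: monotonicity and the quasi-periodicity from termwise ceiling/floor identities plus \eqref{Eq:koebe}, the ``regrouping powers'' count for (vi), the $j=qn+r$ decomposition for (vii), the reflection $\lfloor (k-\ell)n/k\rfloor = n-1-\lfloor \ell n/k\rfloor$ together with the period-$k$ structure of $\lfloor \ell n/k\rfloor$ for (ix), and the totient/Lambert gap-sum computation for (xi). Two steps are genuinely different. For (viii) you propose doing the regrouping directly at $x=k/n$ and tracking the off-by-one terms coming from $k\mid\ell$; this does work (the exceptional terms contribute exactly $(1-s)\,s^n/(1-s^n)$ inside the regrouped sum, which reproduces the gap $s^{n-2}(1-s)^2/(1-s^n)$ from (vii)), whereas the paper avoids the bookkeeping entirely by noting that $\frac{1-s}{s}\sum_\ell s^{\lfloor\ell/x\rfloor}$ and $\Delta_s^-$ are both left continuous, so the irrational-case formula (vi) passes to rationals for $\Delta_s^-$, and then (vii) gives $\Delta_s^+$. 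For (xii) your uniform squeeze $\frac{x}{s}\le\Delta_s^\pm(x)\le\frac{x}{s}+\frac{1-s}{s}$ (from $jx\le\lceil jx\rceil\le jx+1$, $jx-1\le\lfloor jx\rfloor\le jx$) is simpler and more direct than the paper's route, which first computes the limit at rationals from (vii) and then squeezes irrationals by monotonicity and density; your bound handles all $x$ at once.

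Three small points to tighten when writing this up. In (iii)--(iv), your ``monotone convergence on the approaching side'' needs a word when terms are negative (e.g.\ shift by a convergent series, or just use uniform convergence on compacts via the Weierstrass $M$-test, as the paper does). In (x), the statement that \emph{both} one-sided limits of $\Delta_s^-$ at irrational $x$ equal $\Delta_s^-(x)$ requires either the inequality $\Delta_s^-\le\Delta_s^+$ combined with (iv)--(v), or the paper's argument that the two functions agree on a dense set and hence have identical one-sided limits. In (xi), be careful with the accounting: the jump at $k/n=0/1$ is exactly the $n=1$ term $\varphi(1)\,s^{-1}(1-s)^2/(1-s)=(1-s)/s$ of the Lambert sum, not an extra contribution on top of it; the total gap length in $[0,1/s)$ is $1/s$, which already exhausts the interval.
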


\begin{proof}%[Proof of Theorem~\ref{T:delta}]
\hfill

\begin{enumerate}
[label*={(\roman*)}]
\item If $x < y$, then 
$\lceil jx \rceil \le \lceil jy \rceil$ 
and $\lfloor jx \rfloor \le \lfloor jy \rfloor$ for all 
$j \ge 1$. Moreover, for sufficiently large $j$ we have 
$1/j < y - x$, which means $jy > jx + 1$, in which case 
$\lceil jx \rceil < \lceil jy \rceil$ 
and $\lfloor jx \rfloor < \lfloor jy \rfloor$. These 
inequalities imply $\Delta_s^\pm(x) < \Delta_s^\pm(y)$, 
which is the desired result.

\item Substitute $x+1$ into the definition, expand, and use 
Equation~\eqref{Eq:koebe}:
\begin{align*}
\Delta_s^-(x+1) 
&= \left(\frac{1-s}{s}\right)^{\!2} 
   \sum_{j=1}^\infty s^j \lceil j(x+1) \rceil \\
&= \left(\frac{1-s}{s}\right)^{\!2} 
   \left(
   \sum_{j=1}^\infty s^j \lceil jx \rceil
   + \sum_{j=1}^\infty js^j 
   \right) \\
&= \left(\frac{1-s}{s}\right)^{\!2} 
   \sum_{j=1}^\infty s^j \lceil jx \rceil
   + \left(\frac{1-s}{s}\right)^{\!2} \frac{s}{(1-s)^2} \\
&= \Delta_s^-(x) + \frac1s.
\end{align*}
The calculation for $\Delta_s^+(x+1)$ is 
essentially identical.

\item The ceiling function is by definition left continuous, 
and therefore so is every term in the series that defines 
$\Delta_s^-(x)$. A sum of left continuous functions is left 
continuous, and so the partial sums of the series that 
defines $\Delta_s^-(x)$ are left continuous. Now it suffices 
to show that the series converges uniformly to $\Delta_s^-(x)$ 
on compact subsets of $\R$, because uniform convergence 
preserves left continuity. On $[-R,R]$, we have the 
inequality $\big|s^j \lceil jx \rceil\big| \le s^j(jR + 1)$. 
The series $\sum_{j=1}^\infty s^j (jR+1)$ converges (for 
instance, by the ratio test), and so the desired result 
follows from the Weierstrass $M$-test.

\item The proof is the same as that of part (iii), 
\emph{mutatis mutandis}.

\item Because $x \notin \Q$, $jx$ is never an integer when 
$j \ge 1$. Hence $\lceil jx \rceil = 1 + \lfloor jx \rfloor$ 
for all $j$, and we have 
\begin{align*}
\Delta_s^-(x) 
%&= \left(\frac{1-s}{s}\right)^2 
%   \sum_{j=1}^\infty s^j \lceil jx \rceil
&= \left(\frac{1-s}{s}\right)^{\!2} 
   \sum_{j=1}^\infty s^j \big(1 + \lfloor jx \rfloor\big) \\
&= \left(\frac{1-s}{s}\right)^{\!2} 
   \sum_{j=1}^\infty s^j 
   + \left(\frac{1-s}{s}\right)^{\!2} 
   \sum_{j=1}^\infty s^j \lfloor jx \rfloor \\
&= \left(\frac{1-s}{s}\right)^{\!2} 
   \frac{s}{1 - s} 
   + \left(\frac{1-s}{s}\right)^{\!2} 
   \sum_{j=1}^\infty s^j \lfloor jx \rfloor 
 = \Delta_s^+(x),
\end{align*}
as desired.

\item First we observe that, if $j$ and $\ell$ are integers, 
the quantity $j - \lfloor \ell/x \rfloor$ is positive when 
$j > \ell/x$;  by our assumptions on $x$, this inequality is 
equivalent to $\ell \le \lfloor jx \rfloor$. 
Therefore, given a fixed integer $j > 0$, the equation 
$j = i + \lfloor \ell/x \rfloor$ has $1 + \lfloor jx \rfloor 
= \lceil jx \rceil$ solutions $(i,\ell)$ with $i \ge 1$, 
$\ell \ge 0$. Combining this observation with 
Equation~\eqref{Eq:geometric}, we find 
\[
\frac{s}{1 - s} \sum_{\ell=0}^\infty s^{\lfloor \ell/x \rfloor} 
= \sum_{i=1}^\infty s^i
  \sum_{\ell=0}^\infty s^{\lfloor \ell/x \rfloor} 
= \sum_{i=1}^\infty \sum_{\ell=0}^\infty 
  s^{i+\lfloor \ell/x \rfloor} 
= \sum_{j=1}^\infty s^j \lceil jx \rceil.
\]
Applying this identity to the definition of $\Delta_s^-$, 
we obtain 
\[
\Delta_s^-(x) 
= \left(\frac{1-s}{s}\right)^{\!2}
  \sum_{j=1}^\infty s^j \lceil jx \rceil 
= \frac{1-s}{s}
   \sum_{\ell=0}^\infty s^{\lfloor \ell/x \rfloor}.
\]
By part (v), this last expression also equals $\Delta_s^+(x)$.

\item Each $j \ge 0$ can be written uniquely in the form 
$\ell n + r$, with $0 \le r \le n-1$. Thus 
\begin{align*}
\sum_{j=1}^\infty s^j \left\lceil \frac{jk}{n} \right\rceil
 = \sum_{j=0}^\infty s^j \left\lceil \frac{jk}{n} \right\rceil
&= \sum_{r=0}^{n-1} \sum_{\ell=0}^\infty s^{\ell n + r} 
   \left\lceil (\ell n + r)\frac{k}{n} \right\rceil \\
&= \sum_{r=0}^{n-1} \sum_{\ell=0}^\infty s^{\ell n + r} 
   \left( 
   \ell k + \left\lceil \frac{rk}{n} \right\rceil 
   \right) \\
&= \sum_{r=0}^{n-1} s^r 
   \left(
   k \sum_{\ell=0}^\infty \ell s^{\ell n} 
   + \left\lceil \frac{rk}{n} \right\rceil 
     \sum_{\ell=0}^\infty s^{\ell n}
   \right) \\
&= \frac{1 - s^n}{1 - s} \cdot \frac{ks^n}{(1 - s^n)^2}
   + \frac{1}{1 - s^n} \sum_{r=0}^{n-1} s^r 
     \left\lceil \frac{rk}{n} \right\rceil
\end{align*}
and so 
\begin{align*}
\Delta_s^-(k/n) 
&= \left(\frac{1-s}{s}\right)^{\!2}
   \left(
   \frac{ks^n}{(1-s)(1-s^n)} 
   + \frac{1}{1 - s^n} \sum_{r=1}^{n-1} s^r 
     \left\lceil \frac{rk}{n} \right\rceil
   \right) \\
&= \frac{ks^{n-2}(1 - s)}{1 - s^n}
   + \frac{(1 - s)^2}{1 - s^n} 
     \sum_{r=1}^{n-1}s^{r-2}\left\lceil \frac{rk}{n} \right\rceil
\end{align*}
as claimed. 
%(This proof follows a method outlined by Kuipers--Neiderreiter.)
The formula for $\Delta_s^+(k/n)$ is obtained analogously.

Next we observe that, because $k/n$ is in reduced form, $rk/n$ 
is not an integer when $1 \le r \le n-1$, and for such values 
of $r$ we have $\lceil rk/n \rceil=\lfloor rk/n \rfloor +1$. 
Thus we find 
\begin{align*}
\Delta_s^+(k/n)-\Delta_s^-(k/n)&=\frac{ks^{n-2}(1-s)}{1-s^n}
   + \frac{(1-s)^2}{1-s^n}
     \sum_{r=1}^{n-1} s^{r-2} \left\lfloor \frac{rk}{n} \right\rfloor 
   + \frac{1 - s}{s}\\
   &\qquad-\frac{ks^{n-2}(1 - s)}{1 - s^n}
   - \frac{(1 - s)^2}{1 - s^n} 
     \sum_{r=1}^{n-1}s^{r-2}\left\lceil \frac{rk}{n} \right\rceil\\
   &=\frac{(1-s)^2}{1-s^n}
     \sum_{r=1}^{n-1} s^{r-2} \left\lfloor \frac{rk}{n} \right\rfloor 
   + \frac{1 - s}{s}\\
   &\qquad-\frac{(1 - s)^2}{1 - s^n} 
     \sum_{r=1}^{n-1}s^{r-2}\bigg(\left\lfloor \frac{rk}{n} \right\rfloor +1\bigg)\\
   &=\frac{1-s}{s}-\frac{(1-s)^2}{1-s^n}\sum_{r=1}^{n-1}s^{r-2}
    =\frac{1-s}{s}-\frac{(1-s)^2}{1-s^n}\cdot\frac{1-s^{n-1}}{s(1-s)}\\
   &=\frac{(1-s)(1-s^n)-(1-s)(1-s^{n-1})}{s(1-s^n)}
    =\frac{s^{n-2}(1-s)^2}{1-s^n}.
\end{align*}
\item Observe that the function 
$\frac{1-s}{s} \sum_{\ell=0}^\infty s^{\lfloor \ell/x \rfloor}$ 
is left continuous, and so the formula for $\Delta_s^-(k/n)$ 
follows from the irrational case (part (vi)) and the fact that 
$\Delta_s^-$ is also left continuous (part (iii)). The formula 
for $\Delta_s^+(k/n)$ then follows from part (vii).

\item 
Notice that
\begin{align*}
\sum_{\ell=1}^{k}(1/s)^{\left\lfloor (\ell-1)n/k\right\rfloor}
&=1+\sum_{\ell=1}^{k-1}(1/s)^{\left\lfloor \ell n/k\right\rfloor}
 =1+\sum_{\ell=1}^{k-1}(1/s)^{\left\lfloor (k-\ell)n/k\right\rfloor}\\
&=1+\sum_{\ell=1}^{k-1}(1/s)^{n-\left\lceil \ell n/k\right\rceil}
 =1+\sum_{\ell=1}^{k-1}(1/s)^{n-\left\lfloor \ell n/k\right\rfloor -1}\\
&=1+\frac{1}{s^{n-1}}\sum_{\ell=1}^{k-1}s^{\left\lfloor \ell n/k\right\rfloor}.
\end{align*}
Our proposed $\Delta_s^-(k/n)$ then becomes
\begin{align*}
\frac{(s-s^n)(1-s)}{s^2(1-s^n)}+\frac{s^n(1-s)}{s^2(1-s^n)}\left(1+\frac{1}{s^{n-1}}\sum_{\ell=1}^{k-1}s^{\left\lfloor \ell n/k\right\rfloor}\right)&=\frac{1-s}{s(1-s^n)}+\frac{1-s}{s(1-s^n)}\sum_{\ell=1}^{k-1}s^{\left\lfloor \ell n/k\right\rfloor}\\
&=\frac{1-s}{s(1-s^n)}\sum_{\ell=0}^{k-1}s^{\left\lfloor \ell n/k\right\rfloor}\\
&=\frac{1-s}{s}\sum_{j=0}^{\infty}s^{jn}\sum_{\ell=0}^{k-1}s^{\left\lfloor \ell n/k\right\rfloor}\\
&=\frac{1-s}{s}\sum_{\ell=0}^{\infty}s^{\left\lfloor\ell n/k \right\rfloor},
\end{align*}
which, as we saw in part (viii), is $\Delta_s^-(k/n)$. 
Again, from part (vii), we know that 
$\Delta_s^+(k/n)=\frac{s^{n-2}(1-s)^2}{1-s^n}+\Delta_s^-(k/n)$. 
Then 
\begin{align*}
\Delta_s^+(k/n)
&= \frac{s^{n-2}(1-s)^2}{1-s^n}
   + \frac{(s-s^n)(1-s)}{s^2(1-s^n)}
   + \frac{s^n(1-s)}{s^2(1-s^n)}
     \sum_{\ell=1}^{k}
     (1/s)^{\left\lfloor (\ell-1)n/k\right\rfloor}\\
&= \frac{\big(s^n(1-s)+s-s^n\big)(1-s)}{s^2(1-s^n)}
   + \frac{s^n(1-s)}{s^2(1-s^n)}
     \sum_{\ell=1}^{k}
     (1/s)^{\left\lfloor (\ell-1)n/k\right\rfloor}\\
&= \frac{(s-s^{n+1})(1-s)}{s^2(1-s^n)}
   + \frac{s^n(1-s)}{s^2(1-s^n)}
     \sum_{\ell=1}^{k}
     (1/s)^{\left\lfloor (\ell-1)n/k\right\rfloor}\\
&= \frac{1-s}{s}
   + \frac{s^n(1-s)}{s^2(1-s^n)}
     \sum_{\ell=1}^{k}
     (1/s)^{\left\lfloor (\ell-1)n/k\right\rfloor}
\end{align*}
as proposed.

\item By part (i), $\Delta_s^-$ and $\Delta_s^+$ are monotone 
functions, and so they have one-sided limits at every point. 
By part (v), they are equal on the set of irrationals, which 
is dense in $\R$, and so at every point they have the same 
one-sided limits as each other. Thus, at an irrational number 
their one-sided limits match by parts (iii) and (iv), and at 
a rational number their one-sided limits are different 
by part (vii).

\item First we show that the closure of the image of 
$\Delta_s^\pm$ has measure zero. By the translation 
property in part (ii), it suffices to show that the 
closure of the image of $\Delta_s^\pm$ in $[0,1/s)$ 
has measure zero. 

Observe that the complement of the image of either 
$\Delta_s^-$ or $\Delta_s^+$ contains the open interval 
$(\Delta_s^-(k/n),\Delta_s^+(k/n))$ for any rational number 
$k/n$. If $k/n$ is reduced, then by part (vii) the length 
of this open interval depends only on $n$, not on $k$.

Let $\varphi$ be the Euler totient function. Then, for fixed 
$n \ge 1$, there are $\varphi(n)$ intervals in $[0,1/s)$ of 
the form $(\Delta_s^-(k/n),\Delta_s^+(k/n))$, each having 
length $s^{n-2}(1-s)^2/(1-s^n)$. Summing over all $n$, we 
find that the total length of these open intervals is 
\[
\sum_{n=1}^\infty \varphi(n) \frac{s^{n-2}(1-s)^2}{1-s^n} 
= \frac{(1-s)^2}{s^2} 
  \sum_{n=1}^\infty \frac{\varphi(n) s^n}{1 - s^n}
= \frac{(1-s)^2}{s^2} \cdot \frac{s}{(1 - s)^2} 
= \frac1s,
\]
where we have used Equation~\eqref{Eq:lambert} to obtain the 
second equality. Therefore the complement of the image of 
$\Delta_s^\pm$ contains an open set of full measure in 
$[0,1/s)$, which means that the closure of the image of 
$\Delta_s^\pm$ has measure zero.

To show that the closure of the image of $\Delta_s^\pm$ 
is a Cantor set, by Brouwer's characterization it suffices 
to show that it is perfect and totally disconnected. (We 
already know that it is compact and metrizable, since 
it is a closed subset of $\mathbb{RP}^1$.) The fact that 
it is perfect follows from either the left-continuity of 
$\Delta_s^-$ or the right-continuity of $\Delta_s^+$. 
The fact that it is totally disconnected follows from 
its measure being zero.

\item We first prove the result for $x \in \Q$, using 
the formulas from part (vii). Note that 
\[
\frac{1-s}{1-s^n} = \frac{1}{1+s+\cdots+s^{n-1}},
\]
which tends to $1/n$ as $s \to 1$. The first term in 
the expression for either $\Delta_s^-(k/n)$ or 
$\Delta_s^+(k/n)$ given in part (vii) thus approaches 
$k/n$ as $s \to 1$, while all other terms approach $0$, 
due to an additional factor of $1 - s$.

The result for $x \notin \Q$ now follows from the fact 
that $\Delta_s^-$ and $\Delta_s^+$ are monotone. That is, 
given any $x \notin \Q$ and any $\varepsilon > 0$, let 
$r_1$ and $r_2$ be rational numbers such that 
\[
x - \frac{\varepsilon}{2} < r_1 < 
x < r_2 < x + \frac{\varepsilon}{2},
\]
and choose $s$ such that 
$|\Delta_s^\pm(r_1) - r_1| < \varepsilon/2$ and 
$|\Delta_s^\pm(r_2) - r_2| < \varepsilon/2$. 
By part (i), we have 
\[
\Delta_s^\pm(r_1) < \Delta_s^\pm(x) < \Delta_s^\pm(r_2)
\]
which implies 
\[
r_1 - \frac{\varepsilon}{2} < \Delta_s^\pm(x) < 
r_2 + \frac{\varepsilon}{2}
\]
and consequently $x - \varepsilon < \Delta_s^\pm(x) < 
x + \varepsilon$, or $|\Delta_s^\pm(x) - x| < \varepsilon$.
\qedhere
\end{enumerate}
\end{proof}

When we examine how the gaps in the image of $\Delta_s^\pm$ 
vary with $s$, we see an ``Arnold tongues''-type phenomenon, 
illustrated in Figure~\ref{F:tongues}. The curves that bound 
each tongue in this figure are algebraic; 
Theorem~\ref{T:delta}(vii) provides explicit formulas for 
them. (The curves corresponding to irrational values of $\xi$ 
are transcendental, however.) Theorem~\ref{T:delta}(xi) says 
that the intersection of this figure with a vertical segment 
always has measure $0$.

\begin{figure}[h]
\includegraphics[scale=.45]{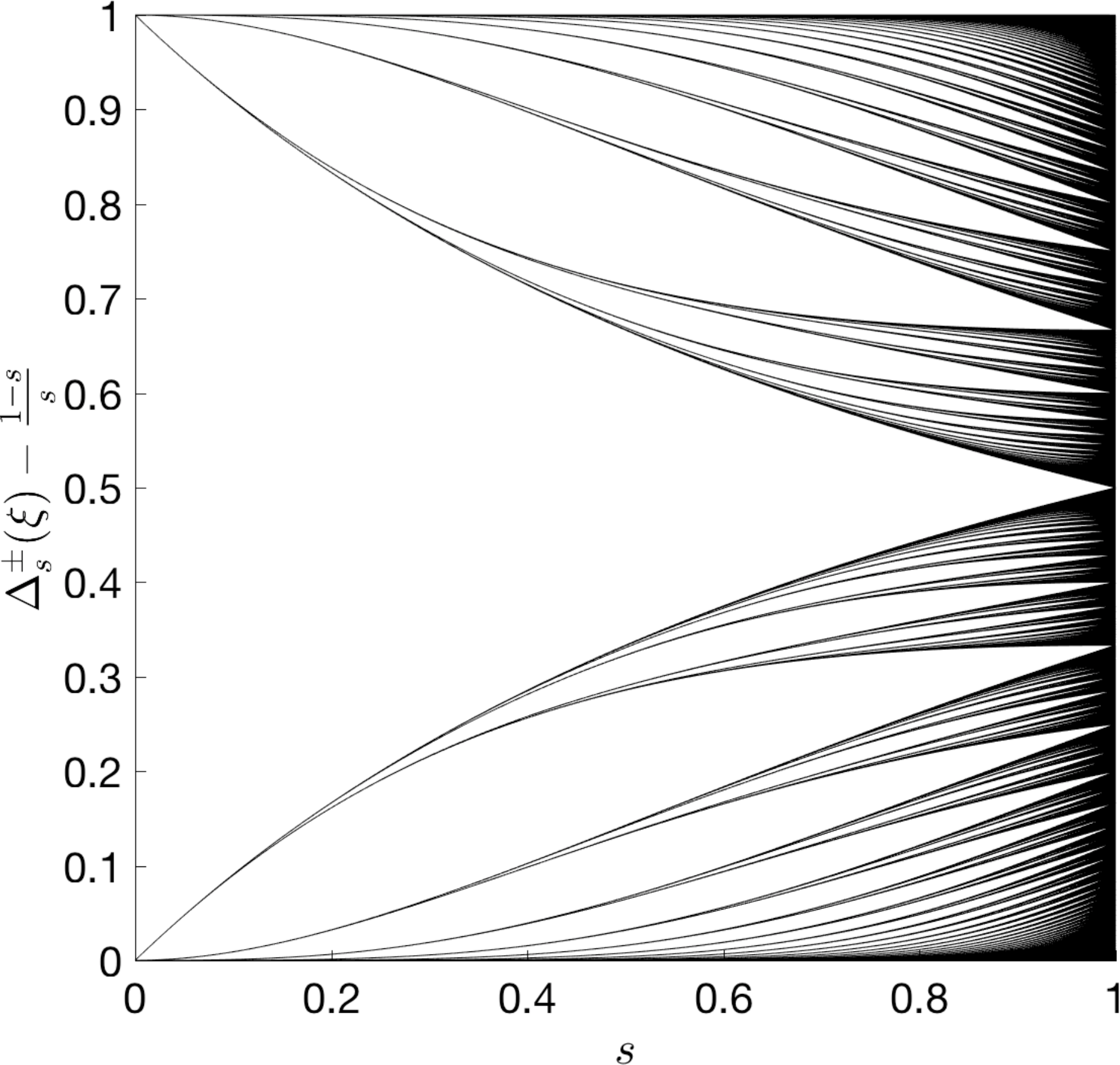}
\caption{``Tongues of angels'': Each tongue 
corresponds to a rational number in $(0,1)$. 
The boundary curves are drawn using the formulas for 
$\Delta_s^\pm(k/n)$.}\label{F:tongues}
\end{figure}

Next, given $s \in (0,1)$ and $\xi \in (0,\infty)$, we 
define two functions $\Upsilon_{s,\xi}^\pm : \R \to \R$ by 
\[
\Upsilon_{s,\xi}^-(x) 
= \frac{1-s}{s}
  \sum_{j=1}^\infty s^j \lceil x - \{j\xi\} \rceil, \qquad
\Upsilon_{s,\xi}^+(x) 
= 1 + \frac{1-s}{s}
  \sum_{j=1}^\infty s^j \lfloor x - \{j\xi\} \rfloor.
\]
(Recall that $\{x\}$ is the fractional part function.) 
A function similar to $\Upsilon_{s,\xi}^+$ is introduced 
in \cite[\S{II.2.1}]{Coutinho} and appears also in 
\cite{JansonOberg}. See Figure~\ref{F:upsilon} for examples. 
Theorem~\ref{T:upsilon} collects several properties of 
$\Upsilon_{s,\xi}^\pm$.

\begin{figure}[h]
\includegraphics[scale=.25]{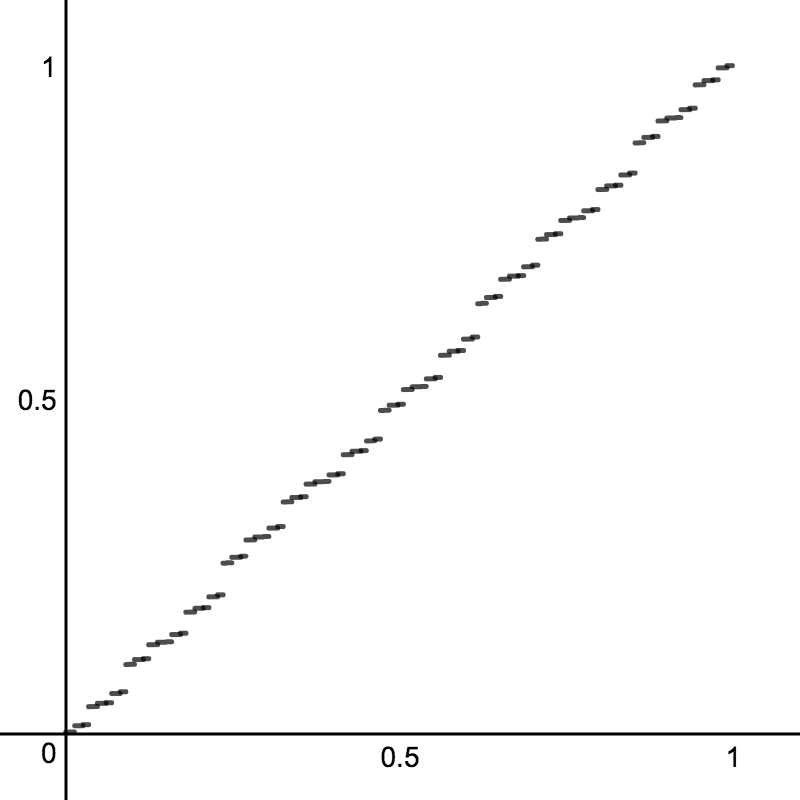}
\hspace{0.75in}
\includegraphics[scale=.25]{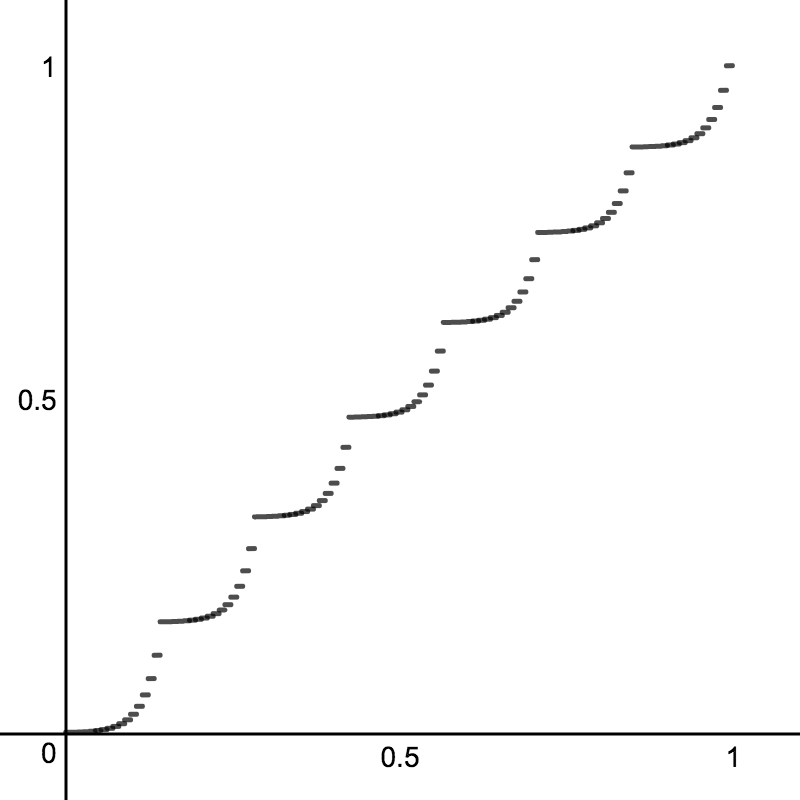}
\caption{The graphs of two functions $\Upsilon_{s,\xi}^\pm(x)$ 
with $s = 0.95$, drawn only for $0 \le x \le 1$. 
{\sc Left:} $\xi = (1+\sqrt5)/2$.
{\sc Right:} $\xi = \pi$.}\label{F:upsilon}
\end{figure}

\begin{thm}\label{T:upsilon}
The functions $\Upsilon_{s,\xi}^\pm$ have the following 
properties:
\begin{enumerate}
[label*={(\roman*)}]
\item $\Upsilon_{s,\xi}^\pm = \Upsilon_{s,\{\xi\}}^\pm$.
\item For all $x \in \R$, $\Upsilon_{s,\xi}^\pm(x+1) = 
\Upsilon_{s,\xi}^\pm(x)+1$.
\item $\Upsilon_{s,\xi}^-$ is left continuous.
$\Upsilon_{s,\xi}^+$ is right continuous.
\item $\Upsilon_{s,\xi}^\pm$ is monotone non-decreasing.
If $\xi \notin \Q$, then $\Upsilon_{s,\xi}^\pm$ is 
strictly increasing.
\item Suppose $\xi \notin \Q$. If $x = a + b\xi$ for 
some $a \in \Z$ and $b \in \Z^+$, then 
$\Upsilon_{s,\xi}^+(x) - \Upsilon_{s,\xi}^-(x) = 
(1-s)s^{b-1}$. For all other values of $x$, 
$\Upsilon_{s,\xi}^+(x) = \Upsilon_{s,\xi}^-(x)$.
\item If $k$ and $n$ are positive integers such that 
$\gcd(k,n) = 1$, then for all $x \in (0,1]$
\[
\Upsilon_{s,k/n}^-(x) 
= \frac{1 - s}{s}
  \left(
  -1 + \frac{1}{1 - s^n}
  \sum_{r=0}^{n-1} s^r 
  \left\lceil x - \left\{\frac{rk}{n}\right\}\right\rceil
  \right),
\]
and for all $x \in [0,1)$
\[
\Upsilon_{s,k/n}^+(x) 
= 1 + \frac{1 - s}{s}\cdot\frac{1}{1 - s^n}
  \sum_{r=1}^{n-1} s^r 
  \left\lfloor x - \left\{\frac{rk}{n}\right\}\right\rfloor.
\]
\item If $\xi\notin\Q$, then the closure of the image of 
$\Upsilon_{s,\xi}^\pm$ is a Cantor set having Lebesgue 
measure $0$.
\item If $\xi\notin\Q$, then for all $x \in \R$, 
$\lim_{\upsilon\to\xi}\Upsilon_{s,\upsilon}^\pm(x) 
= \Upsilon_{s,\xi}^\pm(x)$.
\item If $k$ and $n$ are positive integers such that 
$\gcd(k,n) = 1$, then for all $x \in \R$, 
\[
\lim_{s\to1^-}\Upsilon_{s,k/n}^-(x) = 
\frac1n\lceil nx \rceil 
\qquad\text{and}\qquad
\lim_{s\to1^-}\Upsilon_{s,k/n}^+(x) = 
\frac1n\lfloor nx+1 \rfloor.
\]
\item If $\xi\notin\Q$, then for all $x \in \R$, 
$\lim_{s\to1^-}\Upsilon_{s,\xi}^\pm(x) = x$. 
\end{enumerate}
\end{thm}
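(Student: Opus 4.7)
The plan is to mirror the structure of the proof of Theorem~\ref{T:delta}, treating the ten parts in an order where each builds on earlier work. Parts (i)--(iv) are essentially direct: (i) is the observation that $\{j(\xi+k)\} = \{j\xi\}$ for integer $k$; (ii) expands $\lceil (x+1) - \{j\xi\}\rceil = \lceil x - \{j\xi\}\rceil + 1$ and uses $\frac{1-s}{s}\sum_{j\ge 1}s^j = 1$; (iii) combines the left-continuity of $\lceil\cdot\rceil$ (respectively right-continuity of $\lfloor\cdot\rfloor$) with a Weierstrass $M$-test giving uniform convergence on compact sets in $x$, following Theorem~\ref{T:delta}(iii) verbatim; and (iv) uses that $\lceil\cdot\rceil$ and $\lfloor\cdot\rfloor$ are non-decreasing, with strict monotonicity for $\xi \notin \Q$ obtained from equidistribution of $\{j\xi\}$ modulo $1$, which places infinitely many $j$ in any nonempty arc of $[0,1)$ where the ceilings (or floors) at $x$ and $y$ differ.

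Parts (v) and (vi) are the core computations. For (v) the key observation is that $\lceil x-\{j\xi\}\rceil$ and $\lfloor x-\{j\xi\}\rfloor$ differ precisely when $x-\{j\xi\} \in \Z$, i.e., $x \equiv j\xi \pmod{\Z}$; for $\xi \notin \Q$ this forces $x = a+b\xi$ with $j = b \ge 1$, so at most one index contributes. A direct manipulation using $\frac{1-s}{s}\sum_{j\ne b}s^j = 1-\frac{1-s}{s}s^b$ gives $\Upsilon_{s,\xi}^+(x) - \Upsilon_{s,\xi}^-(x) = (1-s)s^{b-1}$ at these exceptional points, while the identity $\lceil z\rceil = \lfloor z\rfloor + 1$ for $z \notin \Z$ collapses the two series to one another elsewhere. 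Part (vi) then applies the same ``regrouping powers'' trick used in Theorem~\ref{T:delta}(vii): writing $j = \ell n + r$ with $0 \le r \le n-1$, one uses $\{jk/n\} = \{rk/n\}$, sums the inner geometric series in $\ell$, and tidies the $r=0$ term via $\lceil x\rceil = 1$ for $x \in (0,1]$ and $\lfloor x\rfloor = 0$ for $x \in [0,1)$.

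Part (vii) mirrors Theorem~\ref{T:delta}(xi). By (v) the discontinuity set on one fundamental period is the dense countable set $\{\{b\xi\} : b \ge 1\}$, and each jump has size $(1-s)s^{b-1}$; since $\sum_{b\ge 1}(1-s)s^{b-1} = 1$ equals the total rise of $\Upsilon_{s,\xi}^\pm$ across that period by (ii), the closure of the image has Lebesgue measure zero. This closure is compact, totally disconnected (of measure zero inside an interval), and perfect (the endpoints of the image-complement gaps are indexed by the dense discontinuity set, so none is isolated), so Brouwer's characterization yields a Cantor set.

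Parts (viii)--(x) handle limits. For (ix) I substitute the formula from (vi), observe $\frac{(1-s)s^r}{s(1-s^n)} \to 1/n$ as $s \to 1^-$, and use the bijection $r \mapsto \{rk/n\}$ from $\{0,\dots,n-1\}$ onto $\{i/n : 0 \le i \le n-1\}$ (which needs $\gcd(k,n)=1$) to rewrite the sum as $\sum_{i=0}^{n-1}\lceil x - i/n\rceil$; an elementary count identifies this with $\lceil nx\rceil$, and the parallel computation with floors gives $\lfloor nx+1\rfloor$. For (x), with $\xi$ irrational, I apply the Abelian theorem: the weights $\frac{1-s}{s}s^j$ sum to $1$ and concentrate on large $j$ as $s\to 1^-$, so by unique ergodicity of rotation by $\xi$ the Ces\`aro averages of $\lceil x - \{j\xi\}\rceil$ converge to $\int_0^1\lceil x-t\rceil\,dt = x$, and the Abelian theorem transfers this to the claimed limit. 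Part (viii) is where I expect the main obstacle: a dominated-convergence argument handles $x$ outside the countable set $a+b\xi+\Z$, but at the exceptional points $x = a+b\xi$ the $j=b$ term of the series jumps as $\upsilon$ crosses $\xi$, so the limit must be interpreted one-sidedly---approaching from the right for $\Upsilon^-$ and from the left for $\Upsilon^+$, consistent with the one-sided continuity of (iii).
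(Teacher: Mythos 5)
Your proposal is correct, and for parts (i)--(vii) and (ix) it follows essentially the same route as the paper: periodicity and monotonicity from the corresponding properties of $\lceil\cdot\rceil$ and $\lfloor\cdot\rfloor$, one-sided continuity via the Weierstrass $M$-test as in Theorem~\ref{T:delta}(iii), the jump computation in (v) from the single index $j=b$ at which $x-\{j\xi\}\in\Z$, the regrouping $j=\ell n+r$ in (vi), the gap-sum $\sum_{b\ge1}(1-s)s^{b-1}=1$ in (vii), and the permutation $r\mapsto\{rk/n\}$ in (ix). You diverge in two places. For (x), the paper deduces the limit from parts (viii) and (ix) by approximating $\xi$ by rationals $k/n$ with $n$ large and then letting $s\to1^-$; you instead evaluate the limit directly via Weyl equidistribution of $\{j\xi\}$ and an Abelian (Frobenius) theorem, using $\int_0^1\lceil x-t\rceil\,dt=x$. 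Both work; yours is more self-contained and avoids routing through (viii). For (viii), your caveat is not merely a stylistic difference but a genuine correction: the paper's one-line proof asserts that each term is continuous in $\upsilon$ at $\xi$, which fails exactly when $x-\{b\xi\}\in\Z$, i.e.\ for $x\in\Z+\Z^+\xi$; there the $j=b$ term of $\Upsilon^-_{s,\upsilon}(x)$ (resp.\ $\Upsilon^+_{s,\upsilon}(x)$) jumps by $+\frac{1-s}{s}s^b$ as $\upsilon$ decreases through $\xi$ (resp.\ by $-\frac{1-s}{s}s^b$ as $\upsilon$ increases through $\xi$), so the two-sided limit does not exist at those countably many $x$ and only the one-sided limits you identify (from the right for $\Upsilon^-$, from the left for $\Upsilon^+$) return the stated values, consistent with part (v). This refinement does not damage the paper's later use of (viii), since there one only needs convergence along a suitably chosen sequence of rationals, but your statement is the accurate one.
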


\begin{proof}\hfill

\begin{enumerate}
[label*={(\roman*)}]
\item Clear from the equality 
$\{j\xi\} 
= \big\{ j (\lfloor\xi\rfloor + \{\xi\}) \big\}
= \big\{ j\lfloor\xi\rfloor + j\{\xi\} \big\}
= \big\{ j \{\xi\} \big\}$.
\item Substitute $x+1$ into the definition of 
$\Upsilon_{s,\xi}^-$ and expand:
\begin{align*}
\Upsilon_{s,\xi}^-(x+1) 
&= \frac{1-s}{s}
   \sum_{j=1}^\infty s^j \lceil x + 1 - \{j\xi\} \rceil 
 = \frac{1-s}{s}
   \sum_{j=1}^\infty 
   s^j \big(\lceil x - \{j\xi\} \rceil + 1\big) \\
&= \frac{1-s}{s}
   \sum_{j=1}^\infty 
   s^j \lceil x - \{j\xi\} \rceil 
   + \frac{1 - s}{s} \sum_{j=1}^\infty s^j 
 = \Upsilon_{s,\xi}^-(x) + 1.
\end{align*}
The proof for $\Upsilon_{s,\xi}^+$ is essentially 
identical.

\item Same reasoning as in the proof of 
Theorem~\ref{T:delta}(iii).

\item If $x < y$, then 
$\lceil x - \{j\xi\}\rceil \le \lceil y - \{j\xi\} \rceil$ 
and $\lfloor x - \{j\xi\}\rfloor \le 
\lfloor y - \{j\xi\} \rfloor$ for all $j \ge 1$, which 
implies that $\Upsilon_{s,\xi}^\pm$ is non-decreasing, 
because the terms in the series that define 
$\Upsilon_{s,\xi}^\pm(y)$ are at least as great as 
those in the series for $\Upsilon_{s,\xi}^\pm(x)$.

Suppose $\xi\notin\Q$ and $0 \le x < y < 1$. The set of 
numbers of the form $\{j\xi\}$ is dense in $[0,1)$, and 
so there exists some $j$ such that $x < \{j\xi\} < y$; 
for such $j$ we have $\lceil x - \{j\xi\} \rceil = 0$ 
and $\lceil y - \{j\xi\} \rceil = 1$. This demonstrates 
that at least one term in the series defining 
$\Upsilon_{s,\xi}(y)$ is strictly greater than the 
corresponding term in the series for $\Upsilon_{s,\xi}(x)$, 
and so $\Upsilon_{s,\xi}(x) < \Upsilon_{s,\xi}(y)$. 
The general result that $x < y$ implies 
$\Upsilon_{s,\xi}(x) < \Upsilon_{s,\xi}(y)$ when 
$\xi\notin\Q$ follows from this special case and part (ii).

\item For points in $[0,1)$, this follows from the fact 
that a jump of size $(1-s)s^{b-1}$ occurs at $\{b\xi\}$. 
The general result then follows from part (ii).

\item 
Each $j \ge 0$ can be written uniquely in the form 
$\ell n + r$, with $0 \le r \le n-1$. Using the fact that 
$\lceil a \rceil = 1$ for $a \in (0,1]$, we find that 
\begin{align*}
\sum_{j=1}^\infty s^j 
\left\lceil x - \left\{\frac{jk}{n}\right\} \right\rceil 
&= -1 + 
   \sum_{j=0}^\infty s^j 
   \left\lceil 
     x - \left\{\frac{jk}{n}\right\} 
   \right\rceil \\
&= -1 + 
   \sum_{\ell=0}^\infty \sum_{r=0}^{n-1} s^{\ell n + r} 
   \left\lceil 
     x - \left\{\frac{(\ell n + r)k}{n}\right\} 
   \right\rceil \\
&= -1 + 
   \sum_{\ell=0}^\infty s^{\ell n} 
   \sum_{r=0}^{n-1} s^r 
   \left\lceil 
     x - \left\{\ell k + \frac{rk}{n}\right\} 
   \right\rceil \\
&= -1 + 
   \frac{1}{1 - s^n} 
   \sum_{r=0}^{n-1} s^r
   \left\lceil 
     x - \left\{\frac{rk}{n}\right\} 
   \right\rceil.
\end{align*}
The stated result for $\Upsilon_{s,k/n}^-$ follows from 
this equality. 

The proof for $\Upsilon_{s,k/n}^+$ is similar and uses 
the fact that $\lfloor a \rfloor = 0$ for $a \in [0,1)$.

\item First we show that the measure of the closure 
of the image of $\Upsilon_{s,\xi}^\pm$ is zero. As with 
$\Delta_s^\pm$, we consider the sizes of the gaps in 
this image. By the periodicity condition in part (ii), 
it is sufficient to consider the closure of the image 
of $\Upsilon_{s,\xi}$ over the interval $[0,1]$, which 
by part (iv) is contained in $[0,1]$. Note that a gap of 
size $(1 - s)s^{j-1}$ appears in the image due to 
the discontinuity of $\lceil x - \{j\xi\} \rceil$ 
in the $j$th term. Thus the complement of the closure 
of the image of $\Upsilon_{s,\xi}$ in $[0,1]$ has measure 
\[
\sum_{j=1}^\infty (1 - s)s^{j-1} = 1
\]
which implies that the closure of the image of 
$\Upsilon_{s,\xi}$ has measure $0$.

The proof that the closure of the image of 
$\Upsilon_{s,\xi}$ is a Cantor set follows the same 
reasoning as Theorem~\ref{T:delta}(xi).

\item This follows from the fact that each term defining 
$\Upsilon_{s,\upsilon}^\pm$ is continuous at $\xi$ 
as a function of $\upsilon$, together with uniform 
convergence of the series on compact intervals.

\item Let $x \in (0,1]$. By part (vi), 
\[
\Upsilon_{s,k/n}^-(x) 
= -\frac{1 - s}{s} + \frac{1 - s}{1 - s^n}
  \sum_{r=0}^{n-1} s^{r-1} 
  \left\lceil x - \left\{\frac{rk}{n}\right\}\right\rceil.
\]
As $s \to 1^-$, the first term approaches $0$ and the 
expression $(1 - s)/(1 - s^n)$ approaches $1/n$. Because 
$\gcd(k,n) = 1$, the set $\{rk\!\!\mod{n}\}_{r=0}^{n-1}$ is 
a permutation of $\{0,\dots,n-1\}$, and therefore 
\[
\lim_{s\to1^-}
\sum_{r=0}^{n-1} s^{r-1} 
\left\lceil x - \left\{\frac{rk}{n}\right\}\right\rceil 
= \sum_{r=0}^{n-1} \left\lceil x - \frac{r}{n} \right\rceil.
\]
Notice that every term of this final sum is equal to $0$ 
or $1$; the terms equaling $1$ are those for which 
$r/n < x$; there are $\lceil nx \rceil$ such terms. Thus 
$\sum_{r=0}^{n-1} \left\lceil x - \frac{r}{n} \right\rceil 
= \lceil nx \rceil$, which proves the statement for 
$x \in (0,1]$. 

The general result for $\Upsilon_{s,k/n}^-$ now follows from 
part (ii). 

The result for $\Upsilon_{s,k/n}^+$ follows immediately 
from the case $\Upsilon_{s,k/n}^-$ when $x \notin \frac1n\Z$, 
because for such $x$ we have the equality 
$\frac1n \lceil nx \rceil = \frac1n \lfloor nx+1 \rfloor$, 
and part (v) says that also $\Upsilon_{s,k/n}^+(x) 
= \Upsilon_{s,k/n}^-(x)$ for all $s$. When $x \in \frac1n\Z$, 
the result for $\Upsilon_{s,\xi}^+(x)$ follows from the right 
continuity of $\Upsilon_{s,\xi}^+$, as in part (iii).

\item The proof is similar to that of 
Theorem~\ref{T:delta}(xii). Note that, 
given any $x \in \R$, 
$\frac1n \lceil nx \rceil \to x$ and 
$\frac1n \lfloor nx+1 \rfloor \to x$
as $n \to \infty$. Thus, given any $\varepsilon > 0$, 
we can find $N$ large enough that $\Upsilon_{s,k/n}^+(x)$ 
is within $\varepsilon/2$ of $x$ whenever $n \ge N$ and 
$s$ is sufficiently close to $1$. By part (viii) we can 
ensure that $\Upsilon_{s,k/n}^+(x)$ is within 
$\varepsilon/2$ of $\Upsilon_{s,\xi}^+(x)$ whenever 
$k/n$ is sufficiently close to $\xi$, which in particular 
implies that $n$ must be large.
\qedhere
\end{enumerate}
\end{proof}

Parts (vi), (vii), and (viii) of Theorem~\ref{T:upsilon} 
are illustrated by Figure~\ref{F:cantor}. If $\xi\in\Q$, 
then the image of $\Upsilon_{s,\xi}^+$ is finite. If 
$\xi\notin\Q$, then the closure of the image of 
$\Upsilon_{s,\xi}^+$ is a Cantor set, and the image of 
$\Upsilon_{s,\upsilon}^+$ converges (in the Hausdorff 
metric) to this Cantor set as $\upsilon \to \xi$.

\begin{figure}[h]
\includegraphics[scale=.5]{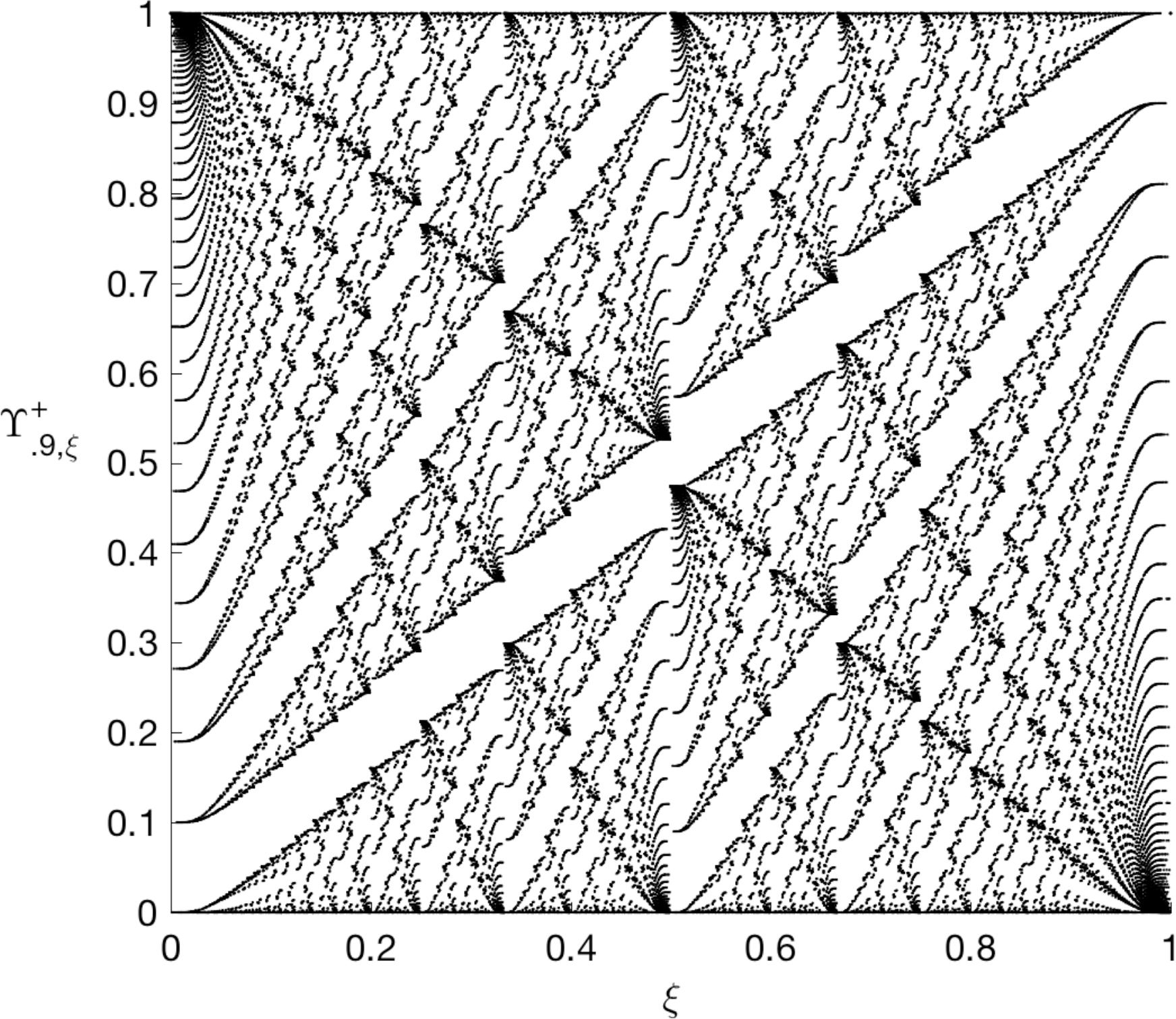}
\caption{The image of $\Upsilon_{s,\xi}^+$ for $0<\xi<1$.}
\label{F:cantor}
\end{figure}

Having seen that the Cantor sets appearing in 
Theorems \ref{T:delta}(xi) and \ref{T:upsilon}(vii) have 
measure $0$, we turn to their Hausdorff dimension.
Let $C_s$ be the closure of the image of $\Delta_s^\pm$, 
and let $C_{s,\xi}$ be the closure of the image of 
$\Upsilon_{s,\xi}^\pm$

\begin{thm}\label{T:hdim}
For all $s \in (0,1)$, $\xi > 0$, the Hausdorff 
dimension of either $C_s$ or $C_{s,\xi}$ is $0$.
\end{thm}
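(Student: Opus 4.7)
The plan is to show, for every $\alpha > 0$, that $\mathcal{H}^\alpha(C_s) = \mathcal{H}^\alpha(C_{s,\xi}) = 0$, by building explicit covers directly from the gap structure of each Cantor set, which the previous two theorems have already described in detail. For rational $\xi$ the set $C_{s,\xi}$ is finite by Theorem~\ref{T:upsilon}(vi), so we may restrict to $\xi \notin \Q$.

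For $C_{s,\xi}$, periodicity (Theorem~\ref{T:upsilon}(ii)) lets us restrict to $C_{s,\xi} \cap [0,1]$, and by Theorem~\ref{T:upsilon}(v) its complement in $[0,1]$ is the disjoint union over $j \ge 1$ of open gaps of length $(1-s)s^{j-1}$, which sum to $1$. Removing the $N$ largest gaps leaves a cover by at most $N+1$ closed intervals of total length $s^N$. Applying Jensen's inequality to the concave function $t \mapsto t^{\alpha}$ (for $0 < \alpha < 1$) yields
\[
\sum_{i=1}^{N+1} \ell_i^{\alpha} \;\le\; (N+1)^{1-\alpha}\, s^{N\alpha},
\]
which tends to $0$ as $N \to \infty$ for every $\alpha > 0$. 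Since each $\ell_i \le s^N$, the covers become arbitrarily fine, so $\mathcal{H}^\alpha(C_{s,\xi}) = 0$.

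The case of $C_s$ follows the same template with slightly more bookkeeping, since the gap sizes now depend on the denominator. Theorem~\ref{T:delta}(vii) gives gap size $s^{n-2}(1-s)^2/(1-s^n)$, and for each $n$ there are $\varphi(n) \le n$ reduced fractions $k/n \in [0,1)$. Removing all gaps with denominator $n \le N$ produces a cover by $O(N^2)$ intervals of total length $O(N s^N)$, using the tail bound $\sum_{n > N} n s^n = O(N s^N)$. Jensen then gives $\sum \ell_i^{\alpha} = O(N^{2-\alpha} s^{N\alpha}) \to 0$. The main technical point to watch is that the cover mesh tends to zero, which in each case is immediate from the bound on total remaining length; beyond that, the proof rests on the geometric decay of gap sizes swallowing the at-most-polynomial growth in their count, so I do not expect any serious obstacle.
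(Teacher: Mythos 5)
Your proposal is correct, and it reaches the conclusion by a more self-contained route than the paper. The paper's proof quotes the gap-sum criterion of Besicovitch--Taylor \cite{BT54}: for a compact measure-zero set $K$, $\dim_H K$ is at most the infimum of $\delta$ for which $S_\delta(K)=\sum_j|I_j|^\delta$ converges, and then verifies convergence for every $\delta>0$ --- a geometric series for $K_{s,\xi}$, and $\sum_n\varphi(n)\bigl(s^{n-2}(1-s)^2/(1-s^n)\bigr)^\delta$ dominated by $\sum_n n(s^\delta)^n$ for $K_s$. You instead build the covers by hand: delete the largest gaps, observe that what remains is a union of few closed intervals of small total length, and convert ``few intervals, small total length'' into ``small $\sum\ell_i^\alpha$'' via concavity of $t\mapsto t^\alpha$. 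The computational input is identical in both arguments (the explicit gap sizes from Theorems~\ref{T:delta}(vii) and \ref{T:upsilon}(v), the count $\varphi(n)\le n$, and geometric decay beating polynomial multiplicity); what your version buys is that it needs no external theorem, at the cost of the mild bookkeeping you already flag (mesh tending to zero, the tail bound $\sum_{n>N}ns^n=O(Ns^N)$). Two trivial repairs: the gap structure of $C_{s,\xi}$ is most directly read off from the proof of Theorem~\ref{T:upsilon}(vii) (part (v) gives the jump sizes that create the gaps), and for $C_s$ you should say explicitly, as you did for $C_{s,\xi}$, that invariance under $x\mapsto x+1/s$ plus countable stability of Hausdorff dimension reduces to the slab $[0,1/s]$. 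Neither affects the validity of the argument.
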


\begin{remark}
The Hausdorff dimension of $C_s$ is also shown to be $0$ in 
\cite[Theorem 4]{LN18}. The result for $C_{s,\xi}$ may be 
deduced from \cite[Theorem 7.3]{JansonOberg}.
\end{remark}

To prove Theorem~\ref{T:hdim}, we will use the notion of 
gap sums, as defined in \cite{BN96}. Here it will be helpful 
that we know precisely the sizes of the jumps that occur 
at discontinuities of the functions $\Delta_s^\pm$ and 
$\Upsilon_{s,\xi}^\pm$. 

Suppose $K \subset \R$ is compact, infinite, and has 
measure zero, and let $I_0 \subset \R$ be the convex 
hull of $K$ (that is, $I_0$ is the smallest closed 
interval in $\R$ that contains $K$). Then the complement 
of $K$ in $I_0$ is a collection of countably many open 
intervals $I_1, I_2, I_3, \dots$. Given $\delta > 0$, 
the \emph{degree $\delta$ gap sum} of $K$ is 
\[
S_\delta(K) = \sum_{j=1}^\infty |I_j|^\delta,
\]
where $|I_j|$ denotes the length of the $j$th interval 
in $I_0 \setminus K$. When $\delta = 1$, the gap sum 
equals the length of $I_0$, by our assumption that $K$ 
has measure zero. Hence the set of $\delta$ for which 
the series $S_\delta(K)$ converges is non-empty. 

In \cite{BT54} it is proved that the Hausdorff dimension 
of $K$ is at most the infimum of $\delta$ for which 
$S_\delta(K)$ converges. Therefore, to show that the 
Hausdorff dimension of $K$ is zero, it suffices to prove 
that $S_\delta(K)$ converges for all $\delta > 0$.

\begin{proof}[Proof of Theorem~\ref{T:hdim}]
Because $C_s$ is invariant under the translation 
$x \mapsto x + 1/s$, we will only consider the 
set $K_s = C_s \cap [0,1/s]$. We return to the 
calculation of the measure of the complement of $K_s$ 
in $[0,1/s]$ from the proof of Theorem~\ref{T:delta}(xi). 
There we saw that this complement contains $\varphi(n)$ 
intervals of length $s^{n-2}(1-s)^2/(1-s^n)$, so the 
degree $\delta$ gap sum for $K_s$ is 
\[
S_\delta(K_s) 
= \sum_{n=1}^\infty 
  \varphi(n) 
  \left(\frac{s^{n-2}(1 - s)^2}{1 - s^n}\right)^{\!\delta}
= \frac{(1 - s)^{2\delta}}{s^{2\delta}}
  \sum_{n=1}^\infty 
  \varphi(n) \left(\frac{s^n}{1 - s^n}\right)^{\!\delta}.
\]
The factor $(1 - s)^{2\delta}/s^{2\delta}$ does not 
affect the convergence of the series. Because $s^n \le s$ 
and $\varphi(n) \le n$ for all $n \ge 1$, we have 
\[
\varphi(n) \left(\frac{s^n}{1 - s^n}\right)^{\!\delta} 
\le \varphi(n) \frac{s^{n\delta}}{(1 - s)^\delta}
\le \frac{n (s^\delta)^n}{(1 - s)^\delta}.
\]
Again, the factor $1/(1 - s)^\delta$ does not affect 
convergence, so we just observe that $s^\delta < 1$, 
which implies that $\sum_{n=1}^\infty n(s^\delta)^n$ 
converges to $s^\delta/(1 - s^\delta)^2$, by 
Equation~\eqref{Eq:koebe}. Therefore, by the comparison 
test, $S_\delta(K_s)$ converges for all $\delta > 0$.

Because $C_{s,\xi}$ is invariant under the translation 
$x \mapsto x + 1$, we will only consider the set 
$K_{s,\xi} = C_{s,\xi} \cap [0,1]$. If $\xi\in\Q$, 
then $K_{s,\xi}$ is finite, and so its Hausdorff dimension 
is zero. If $\xi\notin\Q$, then the complement of 
$K_{s,\xi}$ in $[0,1]$ has one interval of length 
$(1-s)s^{j-1}$ for all $j \ge 1$, so the degree 
$\delta$ gap sum for $K_{s,\xi}$ is 
\[
S_\delta(K_{s,\xi}) 
= \sum_{j=1}^\infty (1 - s)^\delta s^{(j-1)\delta}.
\]
This is a geometric series with ratio $s^{\delta} < 1$, 
so it converges for all $\delta > 0$.
\end{proof}

The following lemma provides a crucial connection between 
the two kinds of functions we have defined in this section 
(cf.\ \cite[\S{II.2}]{Coutinho}).

\begin{lemma}\label{L:DU} 
For all $\xi > 0$ and for all $x \in [0,1)$
\[
\Upsilon_{s,\xi}^+(x+\xi) 
= s\big(\Upsilon_{s,\xi}^+(x) + \Delta_s^+(\xi)\big).
\]
\end{lemma}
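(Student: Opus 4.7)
The plan is to expand both sides directly from the definitions and reconcile them via a single floor-function identity that exploits $\{j\xi\} = j\xi - \lfloor j\xi \rfloor$.

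First I would compute the right-hand side in full: plugging in the formulas gives
\[
s\big(\Upsilon_{s,\xi}^+(x) + \Delta_s^+(\xi)\big)
= 1 + (1-s)\sum_{j=1}^\infty s^j \lfloor x - \{j\xi\} \rfloor
+ \frac{(1-s)^2}{s}\sum_{j=1}^\infty s^j \lfloor j\xi \rfloor.
\]
This reduction is pure bookkeeping: distribute the $s$, use $s\cdot \tfrac{1-s}{s}=1-s$, and collect the constants $s$ and $1-s$ into $1$.

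Next I would attack the left-hand side. The key observation is that since $\lfloor j\xi\rfloor \in \Z$,
\[
x + \xi - \{j\xi\} = x - (j-1)\xi + \lfloor j\xi \rfloor,
\]
and writing $(j-1)\xi = \{(j-1)\xi\} + \lfloor (j-1)\xi \rfloor$ yields the crucial identity
\[
\lfloor x + \xi - \{j\xi\} \rfloor
= \lfloor x - \{(j-1)\xi\} \rfloor - \lfloor (j-1)\xi \rfloor + \lfloor j\xi \rfloor.
\]
Substituting this into the definition of $\Upsilon_{s,\xi}^+(x+\xi)$ splits the series into three pieces.

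For the first piece I reindex $j \mapsto j-1$; the resulting extra factor of $s$ converts $\tfrac{1-s}{s}$ into $1-s$, and the $j=0$ term vanishes because $x \in [0,1)$ forces $\lfloor x\rfloor = 0$. The other two pieces combine after the same reindex into
\[
\left(\frac{1-s}{s} - (1-s)\right)\sum_{j=1}^\infty s^j \lfloor j\xi \rfloor
= \frac{(1-s)^2}{s}\sum_{j=1}^\infty s^j \lfloor j\xi \rfloor,
\]
recognizing the $\Delta_s^+(\xi)$ contribution. Matching with the right-hand side expression completes the proof.

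The only delicate step is the floor identity and the careful reindexing; in particular, the hypothesis $x \in [0,1)$ is used precisely to kill the boundary term $\lfloor x\rfloor$ arising from the shift. Once that observation is in place, everything else is an algebraic rearrangement that converges absolutely (by the geometric bounds used throughout Section~\ref{S:staircase}), so no analytical subtlety intervenes.
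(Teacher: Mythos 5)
Your proposal is correct and follows essentially the same route as the paper's proof: the same floor identity $\lfloor x+\xi-\{j\xi\}\rfloor = \lfloor x-\{(j-1)\xi\}\rfloor + \lfloor j\xi\rfloor - \lfloor (j-1)\xi\rfloor$, the same reindexing, and the same use of $x\in[0,1)$ to kill the boundary term $\lfloor x\rfloor$. The only cosmetic difference is that you expand both sides and match them, while the paper transforms the left-hand side into the right.
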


\begin{proof}
Using the fact that $a = \lfloor a \rfloor + \{a\}$ 
for $a \in \R$, we have for any $x$ and for all $j \ge 1$
\begin{align*}
\lfloor x + \xi - \{j\xi\} \rfloor
&= \big\lfloor 
   x + \xi + \lfloor j\xi \rfloor - j\xi 
   \big\rfloor \\
&= \big\lfloor 
   x + \lfloor j\xi \rfloor - (j - 1)\xi 
   \big\rfloor \\
&= \big\lfloor 
   x + \lfloor j\xi \rfloor - \lfloor(j - 1)\xi\rfloor 
     - \{(j-1)\xi\}
   \big\rfloor \\
&= \lfloor x - \{(j-1)\xi\} \rfloor 
   + \lfloor j\xi \rfloor - \lfloor(j - 1)\xi\rfloor. 
\end{align*}
When $x \in [0,1)$ we also have 
$\big\lfloor x + \lfloor\xi\rfloor \big\rfloor 
= \lfloor x \rfloor + \lfloor\xi\rfloor 
= \lfloor \xi \rfloor$, and therefore in this case 
\begin{align*}
\sum_{j=1}^\infty s^j \lfloor x + \xi - \{j\xi\} \rfloor 
&= s \big\lfloor x + \lfloor\xi\rfloor \big\rfloor 
   + \sum_{j=2}^\infty s^j 
     \big( 
     \lfloor x - \{(j - 1)\xi\} \rfloor 
     + \lfloor j\xi \rfloor - \lfloor(j - 1)\xi\rfloor 
     \big) \\
&= s \lfloor \xi \rfloor 
   + \sum_{j=1}^\infty s^{j+1} \lfloor x - \{j\xi\} \rfloor
   + \sum_{j=2}^\infty s^j \lfloor j\xi \rfloor 
   - \sum_{j=1}^\infty s^{j+1} \lfloor j\xi\rfloor \\
&= s \sum_{j=1}^\infty s^j \lfloor x - \{j\xi\} \rfloor
   + (1 - s) \sum_{j=1}^\infty s^j \lfloor j\xi \rfloor. 
\end{align*}
Multiplying the first and last expressions by 
$(1 - s)/s$ and adding $1$ to both sides produces 
the equality 
\begin{align*}
&  1 + \frac{1 - s}{s} 
   \sum_{j=1}^\infty s^j \lfloor x + \xi - \{j\xi\} \rfloor \\
&= s + 
   s \left(\frac{1 - s}{s}\right) 
   \sum_{j=1}^\infty s^j \lfloor x - \{j\xi\} \rfloor 
   + (1 - s)
   + s \left(\frac{1 - s}{s}\right)^{\!2}
     \sum_{j=1}^\infty s^j \lfloor j\xi \rfloor \\
&= s
   \left( 
   1 + 
   \left(\frac{1 - s}{s}\right) 
   \sum_{j=1}^\infty s^j \lfloor x - \{j\xi\} \rfloor
   + \frac{1 - s}{s}
   + \left(\frac{1 - s}{s}\right)^{\!2}
     \sum_{j=1}^\infty s^j \lfloor j\xi \rfloor
   \right),
\end{align*}
or $\Upsilon_{s,\xi}^+(x + \xi) = 
s \big(\Upsilon_{s,\xi}^+(x) + \Delta_s^+(\xi)\big)$, 
as desired.
\end{proof}

The analogue of Lemma~\ref{L:DU} for 
$\Upsilon_{s,\xi}^-$ and $\Delta_s^-(\xi)$ 
follows in the case that $\xi \notin \Q$ 
from the facts that, in this case, 
$\Delta_s^-(\xi) = \Delta_s^+(\xi)$ and 
$\Upsilon_{s,\xi}^-(x) = \Upsilon_{s,\xi}^+(x)$ 
for a dense set of $x$ in $[0,1)$. In order to get 
an analogous result when $\xi$ is rational, however, 
we need to introduce a slight variant of 
$\Upsilon_{s,k/n}^-$: given positive integers $k$ and $n$ 
such that $\gcd(k,n) = 1$, define for $x \in (0,1]$ 
\[
\underline{\Upsilon}_{s,k/n}^-(x) 
= \frac{1 - s}{s(1 - s^n)}
  \sum_{r=1}^{n-1} s^r 
  \left\lceil x - \left\{\frac{rk}{n}\right\}\right\rceil, 
\]
The reader will observe that this function is closely 
analogous to the formula for $\Upsilon_{s,k/n}^+$ given 
in Theorem~\ref{T:upsilon}(vi), and that 
$\Upsilon_{s,k/n}^-(x) - \underline{\Upsilon}_{s,k/n}^-(x) 
= (s^{n-1} - s^n)/(1 - s^n)$. We leave it to the reader 
to prove that 
$\underline{\Upsilon}_{s,k/n}^-(x+k/n) = 
s\big(\underline{\Upsilon}_{s,k/n}^-(x) 
+ \Delta_s^-(k/n)\big)$ for all $x \in (0,1]$.

Lemma~\ref{L:DU} has the following dynamical consequence.

\begin{thm}\label{T:cantor}
Given $s \in (0,1)$ and $\xi > 0$, set $m = \Delta_s^+(\xi)$. 
As before, let $C_{s,\xi}$ be the closure of the image of 
$\Upsilon_{s,\xi}^+$, and set $K_{s,\xi} = 
C_{s,\xi} \cap [0,1]$. Define $f_{s,\xi} : [0,1) \to [0,1)$ 
by $f_{s,\xi}(y) = \{s(y+m)\}$. Then 
\[
K_{s,\xi} 
= \bigcap_{N=0}^\infty \overline{f_{s,\xi}^N\big([0,1)\big)}.
\]
Moreover, $f_{s,\xi} : K_{s,\xi} \to K_{s,\xi}$ is minimal 
if $\xi \notin \Q$.
\end{thm}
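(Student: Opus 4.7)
The plan is to exhibit a semi-conjugacy between $f_{s,\xi}$ and the rigid rotation $T\colon[0,1)\to[0,1)$, $T(x)=\{x+\xi\}$, and to read both assertions off it.

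First I would combine Lemma~\ref{L:DU} with the translation identity of Theorem~\ref{T:upsilon}(ii) to verify that, for every $x\in[0,1)$,
\[
f_{s,\xi}\bigl(\Upsilon_{s,\xi}^+(x)\bigr)
= \bigl\{s\bigl(\Upsilon_{s,\xi}^+(x)+m\bigr)\bigr\}
= \bigl\{\Upsilon_{s,\xi}^+(x+\xi)\bigr\}
= \Upsilon_{s,\xi}^+\bigl(T(x)\bigr),
\]
the last equality following from a short verification that $\lfloor\Upsilon_{s,\xi}^+(x+\xi)\rfloor = \lfloor x+\xi\rfloor$, which is immediate from Theorem~\ref{T:upsilon}(ii) together with the brief calculation that $\Upsilon_{s,\xi}^+$ sends $[0,1)$ into a translate of $[0,1)$ by an integer constant. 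Iterating gives $f_{s,\xi}^N\circ\Upsilon_{s,\xi}^+ = \Upsilon_{s,\xi}^+\circ T^N$, and the bijectivity of $T$ on $[0,1)$ immediately yields $\mathrm{image}(\Upsilon_{s,\xi}^+)\subseteq f_{s,\xi}^N([0,1))$ for every $N$. Taking closures produces the inclusion $K_{s,\xi}\subseteq\bigcap_{N\ge 0}\overline{f_{s,\xi}^N([0,1))}$.

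For the reverse inclusion I would analyze $[0,1)\setminus K_{s,\xi}$. When $\xi\notin\mathbb{Q}$, Theorem~\ref{T:upsilon}(v) identifies this set as the disjoint union of open gaps $G_b$ $(b\ge 1)$, where $G_b$ lies between $\Upsilon_{s,\xi}^-(\{b\xi\})$ and $\Upsilon_{s,\xi}^+(\{b\xi\})$ and has length $(1-s)s^{b-1}$. A direct computation on the circle $\mathbb{R}/\mathbb{Z}$ shows that $[0,1)\setminus f_{s,\xi}([0,1))$ is a single arc of length exactly $1-s=|G_1|$, and the semi-conjugacy matches its position to $G_1$. More generally $f_{s,\xi}$ is affine of slope $s$ on $G_b$ and sends it bijectively onto $G_{b+1}$. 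Induction then gives
\[
f_{s,\xi}^N([0,1)) = [0,1)\setminus(G_1\cup\cdots\cup G_N),
\qquad
\bigcap_{N\ge 0} f_{s,\xi}^N([0,1)) = [0,1)\setminus\bigcup_{b\ge 1}G_b = K_{s,\xi}.
\]
The boundary points of the $G_b$ already belong to $K_{s,\xi}$, so the closures do not enlarge the intersection. For rational $\xi=k/n$ the same argument runs with the finite gap catalogue from Theorem~\ref{T:upsilon}(vi), and $K_{s,k/n}$ emerges as the unique attracting periodic orbit of $f_{s,k/n}$.

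For minimality when $\xi\notin\mathbb{Q}$, the rotation $T$ is minimal on $[0,1)$, so the semi-conjugacy transports each dense $T$-orbit to a dense $f_{s,\xi}$-orbit on $\mathrm{image}(\Upsilon_{s,\xi}^+)$, which by Theorem~\ref{T:upsilon}(vii) is dense in $K_{s,\xi}$. The only points of $K_{s,\xi}$ that are not values of $\Upsilon_{s,\xi}^+$ are the left-endpoints $\Upsilon_{s,\xi}^-(\{b\xi\})$ of the gaps; each is a left-limit of image points, and since $f_{s,\xi}$ is affine on either side of its unique discontinuity, the orbit of such a boundary point inherits the density of nearby image-orbits. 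I expect the principal obstacle to be the gap bookkeeping $f_{s,\xi}(G_b)=G_{b+1}$, in particular confirming that the single discontinuity of $f_{s,\xi}$ never lies in the interior of any $G_b$ and thus never splits a gap; once this is pinned down, the remaining assertions are essentially formal consequences of the semi-conjugacy.
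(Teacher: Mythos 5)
Your forward inclusion and your minimality argument are the same as the paper's: both rest on the semiconjugacy $f_{s,\xi}\circ\Upsilon_{s,\xi}^+=\Upsilon_{s,\xi}^+\circ T$ obtained from Lemma~\ref{L:DU} together with Theorem~\ref{T:upsilon}(ii), and on pulling back along $T^{-N}$ to see that $\Upsilon_{s,\xi}^+([0,1))\subseteq f_{s,\xi}^N([0,1))$. For the reverse inclusion your route differs from the paper's: the paper takes a point $y_0$ of the intersection, writes it explicitly as the series $\sum_{f^N(0)\le y_0}(1-s)s^{N-1}$, and manufactures a sequence $x_\ell$ with $\Upsilon_{s,\xi}^+(x_\ell)\to y_0$, whereas you run an induction $f^N([0,1))=[0,1)\setminus(G_1\cup\cdots\cup G_N)$ on the gap catalogue. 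For $\xi\notin\Q$ your version is clean and arguably more transparent, and the obstacle you flag is easy to dispatch: $\Upsilon_{s,\xi}^+(0)=0$ (the defining series telescopes to $1+\frac{1-s}{s}\cdot\frac{-s}{1-s}=0$), so the circle-discontinuity of $f_{s,\xi}$ sits at a point of $K_{s,\xi}$ and can never lie in the interior of a gap; moreover the complement of $f_{s,\xi}([0,1))$ is a connected open arc of length $1-s$ disjoint from the image of $\Upsilon_{s,\xi}^+$, hence from its closure, hence contained in a single gap, and only $G_1$ is long enough to contain it.

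The genuine gap is your last sentence about rational $\xi$. When $\xi=k/n$ the image of $\Upsilon_{s,k/n}^+$ is a finite set of $n$ points, so the ``finite gap catalogue'' consists of $n$ complementary intervals of total length $1$; your induction would terminate after $n$ steps and give $f^n([0,1))=K_{s,k/n}$, a finite set, which is impossible since $f^N([0,1))$ has Lebesgue measure $s^N>0$ for every $N$. What actually happens is that $f^n(0)=\Upsilon^+(\{nk/n\})=0$ is the discontinuity point, so the gap created at stage $n+1$ is \emph{not} the image of the stage-$n$ gap under the continuous branch; the sets $f^N([0,1))$ are instead finite unions of arcs shrinking geometrically onto the periodic orbit, and one must argue that their nested intersection is exactly that orbit. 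This needs a separate (short) argument rather than the irrational-case bookkeeping. Finally, for minimality, ``inherits the density of nearby image-orbits'' is too loose for the left gap-endpoints $\Upsilon_{s,\xi}^-(\{b\xi\})$: the clean fix is the analogous semiconjugacy for $\Upsilon_{s,\xi}^-$ (valid for $\xi\notin\Q$, as remarked after Lemma~\ref{L:DU}), which shows those orbits are also the $\Upsilon^-$-image of a dense rotation orbit.
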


\begin{proof}
By Theorem~\ref{T:upsilon}(ii), 
$\big\{\Upsilon_{s,\xi}^+(x)\big\} 
= \Upsilon_{s,\xi}^+(\{x\})$ 
for all $x$. 
Lemma~\ref{L:DU} then implies that 
$f_{s,\xi}\big(\Upsilon_{s,\xi}^+(x)\big)
= \Upsilon_{s,\xi}^+\big(\{x+\xi\}\big)$
for all $x \in [0,1)$. By induction, 
\begin{equation}\label{Eq:fN}
f_{s,\xi}^N\big(\Upsilon_{s,\xi}^+(x)\big)
= \Upsilon_{s,\xi}^+\big(\{x+N\xi\}\big)
\end{equation}
for all $x \in [0,1)$ and $N \in \Z^+$.

Assume $y_0 \in K_{s,\xi}$. Then there exists a sequence 
$x_1,x_2,\dots$ of points in $[0,1)$ such that 
$\Upsilon_{s,\xi}^+(x_j)$ converges to $y_0$. If we let 
$x^*_N = \{x_N - N\xi\}$, then 
$f_{s,\xi}^N(\Upsilon_{s,\xi}^+(x_N^*)) = 
\Upsilon_{s,\xi}^+(x_N)$ by \eqref{Eq:fN}. Because 
the sets $f_{s,\xi}^N([0,1))$ are nested, we know that 
$y_0 \in \overline{f_{s,\xi}^N\big([0,1)\big)}$ for all $N$.

For the other inclusion, assume $y_0 \in 
\bigcap_{N=1}^\infty \overline{f_{s,\xi}^N\big([0,1)\big)}$. 
We observe that $f_{s,\xi}^N(0)$ is the upper endpoint of 
an open interval of size $(1-s)s^{N-1}$ in the complement of 
the image of $f_{s,\xi}^N$, and by assumption $y_0$ is not 
contained in any such open interval. Moreover, these intervals 
are all disjoint, and their total measure is $1$, so we can 
write 
\begin{align*}
y_0 
&= \sum_{f^N\!(0)\le y_0} (1 - s) s^{N-1} \\
&= \sum_{N=1}^\infty 
   (1 - s) s^{N-1} 
   \big( 1 + \lfloor y_0 - f^N(0) \rfloor \big) \\
&= \sum_{N=1}^\infty (1 - s) s^{N-1} 
   + (1 - s) 
     \sum_{N=1}^\infty s^{N-1} \lfloor y_0 - f^N(0) \rfloor \\
&= 1 
   + \frac{1-s}{s} 
     \sum_{N=1}^\infty s^N \lfloor y_0 - f^N(0) \rfloor.
\end{align*}
Now we need to find a sequence $x_1,x_2,\dots$ of points 
in $[0,1)$ such that $\Upsilon_{s,\xi}^+(x_\ell)$ converges 
to this value. Equivalently, we need the points $x_\ell$ to 
satisfy the limit
\[
\lim_{\ell\to\infty}
\sum_{N=1}^\infty s^N \lfloor x_\ell - \{N\xi\} \rfloor 
= \sum_{N=1}^\infty s^N \lfloor y_0 - f^N(0) \rfloor. 
\]
Fortunately, this equation tells us how to find such a 
sequence. Because $s$ is fixed and the coefficient of 
each $s^N$ in both series is either $0$ or $-1$, it is 
enough to choose $x_\ell$ so that the first $\ell$ 
coefficients of the series match. Given $\ell \ge 1$, 
let $x_\ell$ be the largest value of $\{N\xi\}$ such 
that  $0 \le N \le \ell$ and $f^N(0) \le y_0$. Then the 
two series above match in their first $\ell$ terms. 
Consequently, $\Upsilon_{s,\xi}^+(x_\ell)$ converges to 
$y_0$ as $\ell\to\infty$.

When $\xi \notin \Q$, the orbit of any point $x \in [0,1)$ 
is dense under the circle rotation $x \mapsto \{x+\xi\}$. 
Because $K_{s,\xi}$ is a perfect set, and the image of $[0,1)$ 
by $\Upsilon_{s,\xi}^+$ is dense in $K_{s,\xi}$, so is the 
image of the orbit of $x$. Thus the orbit of 
$\Upsilon_{s,\xi}^+(x)$ under $f_{s,\xi}$ is dense in 
$K_{s,\xi}$, which means that the restriction of 
$f_{s,\xi}$ to $K_{s,\xi}$ is minimal.
\end{proof}

When $\xi\notin\Q$, Theorem~\ref{T:cantor} can be interpreted 
in terms of AIETs as follows. Because $\Upsilon_{s,\xi}^+$ is 
strictly increasing, it can be inverted, and its inverse can 
be extended to a unique continuous non-decreasing function 
$g_{s,\xi}$ defined on all of $\R$. 
% It also satisfies the functional equation 
% $g_{s,\xi}(y+1) = g_{s,\xi}(y) + 1$. 
The following diagram commutes:
\[
\begin{CD}
[0,1)    @>{f_{s,\xi}}>> [0,1) \\
@V{g_{s,\xi}}VV @VV{g_{s,\xi}}V \\
[0,1)    @>>{x \mapsto \{x + \xi\}}> [0,1)
\end{CD}
\]
That is, $g_{s,\xi}$ is a semiconjugacy from the AIET 
$y \mapsto \{s(y + \Delta_s^+(\xi))\}$ to the circle 
rotation with parameter $\xi$. This function $g_{s,\xi}$ 
is a ``devil's staircase'' in the usual sense: it is a 
continuous, non-constant function that is almost everywhere 
locally constant. 

Similarly, $\Delta_s^-$ and $\Delta_s^+$ can be inverted, 
and their inverses can be extended to a continuous 
non-decreasing function which is also a devil's staircase.

% Theorems \ref{T:delta}, \ref{T:upsilon}, 
% \ref{T:hdim}, and \ref{T:cantor} 
The theorems of this section have dynamical implications for 
the linear trajectories on $X_s$, which we will explore after 
establishing a correspondence between such linear trajectories 
and those on the square torus $T^2 = \R^2/\Z^2$.

\section{Cutting sequences on $X_s^+$}\label{S:symbolic}

Let $s \in (0,1)$. In this section we focus on forward 
trajectories on $X_s^+$. Recall that ``forward'' means 
the trajectories locally move from left to right, and 
that $X_s^+$ is ``stable'' for such trajectories. In 
particular, a forward trajectory that starts on edge 
$A$ or $E$ (see Figure~\ref{F:Xs}) remains on $X_s^+$ 
for all (positive) time.

Our main goal for this section is to prove the following: 

\begin{thm}\label{T:cutting_sequences}
Let $\xi \ge 0$. 
\begin{enumerate}
[label*={(\roman*)}]
\item If $\xi \notin \Q$ and  $m = \Delta_s^\pm(\xi)$, 
then a forward trajectory with slope $m$ starting 
anywhere on edge $E$ has the same cutting sequence as 
a trajectory with slope $\xi$ on $T^2$.
\item If $\xi \in \Q$ and $m = \Delta_s^+(\xi)$, then 
the trajectory with slope $m$ starting at the top of edge 
$A$ is a saddle connection and has the same cutting sequence 
as a trajectory with slope $\xi$ on $T^2$.
\item If $\xi \in \Q$ and $m = \Delta_s^-(\xi)$, then 
the trajectory with slope $m$ starting at the bottom of edge 
$A$ is a saddle connection and has the same cutting sequence 
as a trajectory with slope $\xi$ on $T^2$.
\item If $\xi = k/n \in \Q$ with $\gcd(k,n)=1$, and 
$m \in (\Delta_s^-(\xi),\Delta_s^+(\xi))$, then a 
trajectory with slope $m$ starting at the point on edge 
$A$ at height 
\[
y_0 
= \frac{s^2}{1-s}m 
  - \frac{s^n}{1-s^n} 
    \sum_{\ell=1}^{k} 
    \left(\frac1s\right)^{\lfloor (\ell-1) n/k \rfloor}
\]
is closed and has the same cutting sequence as a 
trajectory with slope $\xi$ on $T^2$.
\end{enumerate}
\end{thm}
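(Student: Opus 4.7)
The plan is to reduce all four parts to one master identity. By Lemma~\ref{L:DU}, if $m = \Delta_s^+(\xi)$ and $y = \Upsilon_{s,\xi}^+(z)$, then $s(y+m) = \Upsilon_{s,\xi}^+(z+\xi)$, and combining this with Theorem~\ref{T:upsilon}(ii) together with the fact that $\Upsilon_{s,\xi}^+([0,1))\subseteq[0,1]$ yields
\[
\lfloor s(y+m)\rfloor \;=\; \lfloor z+\xi\rfloor.
\]
The left side counts the $B$-crossings between consecutive returns to $A$ under the AIET $y\mapsto\{s(y+m)\}$ from~\S\ref{SS:AIET}; the right side counts horizontal-edge crossings per rotation step $z\mapsto\{z+\xi\}$ on $T^2$. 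So matching cutting sequences reduces to matching initial conditions via $\Upsilon_{s,\xi}^\pm$ (or $\underline{\Upsilon}_{s,k/n}^-$ in the rational case).

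For part~(i), where $\xi\notin\Q$, I would use the continuous non-decreasing extension $g_{s,\xi}$ of $(\Upsilon_{s,\xi}^+)^{-1}$ produced by Theorem~\ref{T:cantor}. Let $y_1^{\ast}\in[0,1)$ be the height at which the forward trajectory first meets $A$ after starting on $E$, and set $z_1^{\ast} = g_{s,\xi}(y_1^{\ast})$; the trajectory on $T^2$ with slope $\xi$ starting at $z_1^{\ast}$ is the target. The master identity holds verbatim when $y_1^{\ast}$ lies in the Cantor attractor $K_{s,\xi}$, so the main obstacle is the case $y_1^{\ast}\notin K_{s,\xi}$. To handle it I would show that the step function $y\mapsto\lfloor s(y+m)\rfloor$ is constant on every gap of $K_{s,\xi}$, which reduces to checking that its jump points $y=j/s-m$ all lie in $K_{s,\xi}$: this follows by matching each such $j/s-m$ to an endpoint of one of the jumps of $\Upsilon_{s,\xi}^+$ at a point $\{b\xi\}$ via Theorem~\ref{T:upsilon}(v).

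Parts~(ii) and~(iii) are the boundary cases of an Arnold tongue at rational $\xi = k/n$. For~(ii), starting just below the top of $A$ with $m = \Delta_s^+(k/n)$, I would iterate the AIET $n$ times and use the closed form for $\Delta_s^+(k/n)$ in Theorem~\ref{T:delta}(viii) to verify that $f_{s,m}^n(1^-)=0$, giving a saddle connection from the top of $A$ to the bottom. Part~(iii) is symmetric using $\Delta_s^-(k/n)$ starting at $0^+$. The intermediate cutting sequence is then the finite Sturmian word of slope $k/n$ realized by a corner-to-corner trajectory on $T^2$, matched via the master identity.

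Part~(iv) asks for a genuine closed orbit of $f_{s,m}$. The expected cutting sequence is the Christoffel word of slope $k/n$, whose $1$'s occur at positions $i=\lfloor(\ell-1)n/k\rfloor$ for $\ell=1,\dots,k$, per the intermediate-fraction discussion of~\S\ref{Continued fractions} and Lemma~\ref{convergent_formulas}. Assuming this sequence of floors, direct iteration of the AIET yields
\[
y_n \;=\; s^n y_0 \,+\, \frac{ms(1-s^n)}{1-s} \,-\, \sum_{i=0}^{n-1} s^{n-1-i}\, b_i,
\]
and imposing $y_n = y_0$ determines $y_0$ uniquely; evaluating the telescoping sum at the Christoffel positions produces the closed form in the statement. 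The remaining check is that this $y_0$ avoids $\{0,1\}$ precisely when $m\in(\Delta_s^-(k/n),\Delta_s^+(k/n))$, which will confirm that we are in the Arnold-tongue regime where the orbit closes up rather than degenerating to a saddle connection.
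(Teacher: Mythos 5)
Your strategy is genuinely different from the paper's. The paper develops each trajectory into a ``stacking diagram'' and proves, by direct series manipulations and continued-fraction estimates (\S\ref{Acrossing}--\S\ref{Bcrossing}, Lemmas~\ref{L:floorineq} and~\ref{(j+1)BtwnIntFracs}), that the developed lines remain inside the diagram of $w(\xi)$; you instead propose to read the cutting sequence off the semiconjugacy of Lemma~\ref{L:DU} and Theorem~\ref{T:cantor}. Your master identity is correct: for $\xi\notin\Q$, $y=\Upsilon_{s,\xi}^+(z)$ and $m=\Delta_s^+(\xi)$ give $s(y+m)=\Upsilon_{s,\xi}^+(\{z+\xi\})+\lfloor z+\xi\rfloor$ with $\Upsilon_{s,\xi}^+(\{z+\xi\})\in[0,1)$, so the number of $B$-crossings per return to $A$ matches the number of horizontal crossings per rotation step. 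Completed, this would be a shorter route to part~(i) than the paper's, at the cost of losing the explicit geometric picture (the parallel strip in the unfolding) that the paper later uses.

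There are, however, genuine gaps. In part~(i), for a first-return height $y_1^{\ast}$ lying in a gap of $K_{s,\xi}$ you need the \emph{entire} forward itinerary, not just the first floor, to agree with that of the gap's endpoints; this requires that $f_{s,\xi}$ carry gap closures into gap closures, which hinges on the branch point $1/s-m$ of $f_{s,\xi}$ lying in $K_{s,\xi}$. That fact is true, but not for the reason you give: Theorem~\ref{T:upsilon}(v) locates the jumps of $\Upsilon_{s,\xi}^+$ at points $a+b\xi$ with $b\in\Z^+$, whereas $1/s-m$ is the \emph{value} $\Upsilon_{s,\xi}^+(1-\xi)$ (put $x=1-\xi$ in Lemma~\ref{L:DU} and use $\Upsilon_{s,\xi}^+(1)=1$). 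You also never match the letters crossed between $E$ and the first visit to $A$. More seriously, in part~(iv) the phrase ``assuming this sequence of floors'' assumes the conclusion: solving $y_n=y_0$ under a guessed itinerary produces a candidate $y_0$, but one must then verify that each intermediate height $y_1,\dots,y_{n-1}$ actually realizes the assumed floor, and checking only that $y_0\notin\{0,1\}$ does not do that. The same circularity affects the computation $f^n(1^-)=0$ in parts~(ii)--(iii), since evaluating $f^n$ presupposes the floors along the way. This consistency verification is precisely where the paper invests its effort: it establishes (ii) and (iii) by comparison with a nearby irrational slope $\xi'>k/n$ whose stacking diagram agrees for the first $k+n-1$ steps, and only then traps the closed orbit of (iv) between the two saddle connections.
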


The restriction that $\xi \ge 0$ is not serious, because 
the affine automorphism $\psi_s$ of $X_s$ (from 
\S\ref{SS:mainexample}) transforms trajectories with 
negative slope into trajectories with positive slope and 
the same cutting sequences.
In addition, by applying a power of the Dehn twist $\phi_s$ 
(see again \S\ref{SS:mainexample}) that makes $0 \le m < 1/s$, 
we may assume that $0 \le \xi < 1$; this has the effect of 
allowing trajectories to cross the $A$ edge at most once in 
between crossings of the $B$ edge. 

To facilitate the proof of Theorem~\ref{T:cutting_sequences}, 
we introduce the notion of a stacking diagram, which is 
analogous to the unfolding diagram of a polygon reflected 
across its sides in sequence, as used in the study of 
polygonal billiards.

\subsection{Stacking diagrams}

Given a word $w = w_1 w_2 \dots$ in $A$s and $B$s, let 
$|w|$ denote its length (which may be finite or infinite). 
Also let $|w|_A$ be the number of times $A$ appears in $w$ 
and $|w|_B$ be the number of times $B$ appears in $w$, so 
that $|w| = |w|_A + |w|_B$. If $1 \le i \le |w|_B$, then 
let $\lambda_w(i)$ be the number of $A$s that appear before 
the $i$th $B$ in $w$. If $j \le |w|$, then set 
$w^j = w_1 w_2 \dots w_j$.

The \emph{stacking diagram} of $w$ is a union of $|w|+1$ 
rectangles (which we call ``boxes'') $R_j$ in $\R^2$, 
constructed in the following manner. 
\begin{itemize}
\item The $0$th box $R_0$ has vertices $(1-s,0)$, $(1,0)$, 
$(1,1)$, and $(1-s,1)$. 
\item If $w_j = A$, then the dimensions of $R_j$ are 
increased from those of $R_{j-1}$ by a factor of $1/s$, 
and $R_j$ is placed to the right of $R_{j-1}$, with their 
bottom edges aligned.
\item If $w_j = B$, then $R_j$ is placed on top of 
$R_{j-1}$, and the dimensions are unchanged.
\end{itemize}
Therefore, the horizontal sides of $R_j$ have length 
$s^{1-|w^j|_A}$, and the vertical sides of $R_j$ have 
length $s^{-|w^j|_A}$. The lower-left corner of $R_j$ 
is shifted from the lower-left corner of $R_{j-1}$ by
\begin{align*}
&\left(\frac{1}{s^{|w^j|_A-1}},\ 0\right)
&&\text{if $w_j = A$,} \\
&\left(0,\ \frac{1}{s^{|w^j|_A}}\right)
&&\text{if $w_j = B$.}
\end{align*}
Based on these data, we calculate that the upper right 
corner of $R_j$ has coordinates
\begin{equation}\label{Eq:Rjcoord}
(x_j,y_j) = 
\left(
  1 + \sum_{i=1}^{|w^j|_A} \frac{1}{s^{i-1}},\ 
  \frac{1}{s^{|w^j|_A}} 
    + \sum_{i=1}^{|w^j|_B} \frac{1}{s^{\lambda_w(i)}}
\right).
\end{equation}
Each box $R_j$ will be considered as a coordinate on 
$R_s^+$ (see Figure~\ref{F:Xs}).

\begin{figure}
\includegraphics{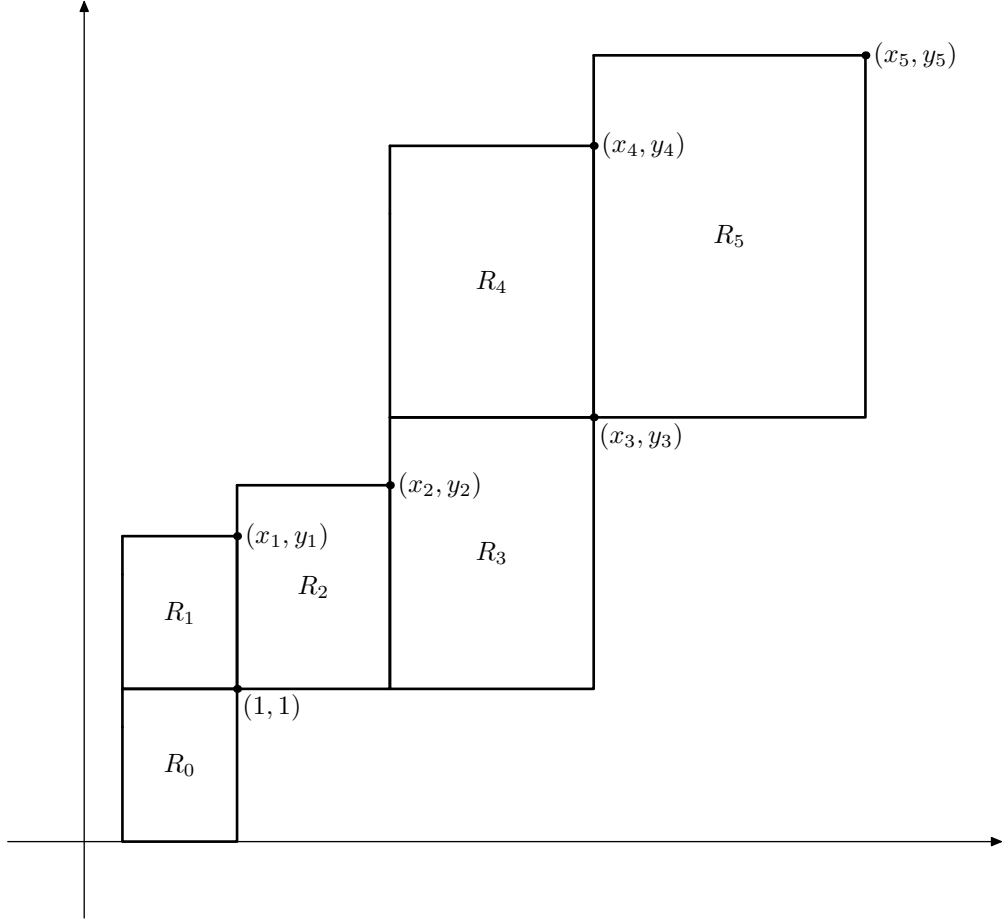}
\caption{Stacking diagram for the word $BAABA$.}
\label{F:stacking}
\end{figure}

\subsection{Canonical words}

Given a positive real number $\xi > 0$, we construct 
a canonical word $w = w(\xi)$, based on the cutting 
sequence of a trajectory with slope $\xi$ on the square 
torus $T^2 = X_1^+$. If $\xi\notin\Q$, then $w(\xi)$ is 
infinite, whereas if $\xi = k/n$ with $\gcd(k,n) = 1$, 
then $|w(\xi)| = k + n$. The case of $\xi \notin \Q$ 
produces what are known as \emph{Sturmian sequences}.

A trajectory that starts at $(0,0)$ with slope $\xi$ 
has equation $y = \xi x$. The $j$th square into which it 
enters (provided it has not passed through any additional 
corners besides $(0,0)$) is the square containing the 
intersection of the lines $y = \xi x$ and $x + y = j$. 
Because the $x$-coordinate of this intersection is 
$j/(\xi+1)$, after reaching the $j$th square the trajectory 
has crossed $\lfloor j/(\xi+1) \rfloor$ vertical edges. 
Therefore we set 
\begin{equation}\label{Eq:ABlength}
|w^j|_A = \left\lfloor \frac{j}{\xi + 1} \right\rfloor, 
\qquad
|w^j|_B = \left\lceil \frac{j\xi}{\xi + 1} \right\rceil,
\end{equation}
so that $|w^j|_A + |w^j|_B = j$. In particular, 
$|w^1|_A = \lfloor 1/(\xi + 1) \rfloor = 0$ and 
$|w^1|_B = \lceil \xi/(\xi + 1) \rceil = 1$, so $w_1 = B$. 
(This convention makes the tacit assumption, which will be 
useful later, that the trajectory ``entered'' the first 
square from the bottom---i.e., horizontal---edge.) When 
$\xi \notin \Q$, the sequences $|w^j|_A$ and $|w^j|_B$ are 
\emph{complementary Beatty sequences}.

To find the remaining letters of $w$, we check whether the 
number of $A$s or the number of $B$s increases when moving 
from $j-1$ to $j$: for $j > 1$,
\stepcounter{equation}
\begin{equation}\label{Eq:Acond}\tag{\arabic{equation}a}
w_j = 
\begin{cases}
A & \text{if}\quad 
    \lfloor j/(\xi+1) \rfloor
    - \lfloor (j-1)/(\xi+1) \rfloor 
    = 1, \\
B & \text{if}\quad 
    \lfloor j/(\xi+1) \rfloor
    - \lfloor (j-1)/(\xi+1) \rfloor 
    = 0.
\end{cases}
\end{equation}
Equivalently, 
\begin{equation}\label{Eq:Bcond}\tag{\arabic{equation}b}
w_j = 
\begin{cases}
A & \text{if}\quad 
    \lceil j\xi/(\xi+1) \rceil 
    - \lceil (j-1)\xi/(\xi+1) \rceil 
    = 0, \\
B & \text{if}\quad 
    \lceil j\xi/(\xi+1) \rceil 
    - \lceil (j-1)\xi/(\xi+1) \rceil 
    = 1.
\end{cases}
\end{equation}
\setcounter{equation}{13}
The number of $A$s that appear before the $i$th $B$ in 
$w(\xi)$ is given by the function 
\begin{equation}\label{Eq:lambdacount}
\lambda_w(i) 
= \left\lfloor \frac{i - 1}{\xi} \right\rfloor
\end{equation}
because this is the number of vertical edges crossed before 
the line $y = \xi x$ intersects $y = i - 1$. (We normalize 
so that $\lambda_w(1) = 0$ to match the convention that
$w_1 = B$.)

\subsection{Comparing coordinates: irrational case}
\label{irrational case}

Suppose $0 < \xi < 1$ and $\xi \notin \Q$. Set 
$m = \Delta_s^{\pm}(\xi)$. To prove 
Theorem~\ref{T:cutting_sequences}(i), we consider two 
particular trajectories: 
\begin{itemize}
\item $\tau_\xi^-$, which has slope $m$ and starts at the 
top of edge $E$;
\item $\tau_\xi^+$, which has slope $m$ and starts at the 
bottom of edge $E$.
\end{itemize}
We will develop both of these trajectories in the plane and 
show that the corresponding lines in $\R^2$ remain within 
the stacking diagram of $w = w(\xi)$. (See 
Figure~\ref{F:tauplusminus}. The reason for labeling the 
trajectories so that $\tau_\xi^-$ is ``above'' $\tau_\xi^+$ 
will become clear when we consider the rational case.) This 
will imply that a forward trajectory starting anywhere on 
$E$ with the same slope has the same cutting sequence $w$.

\begin{figure}
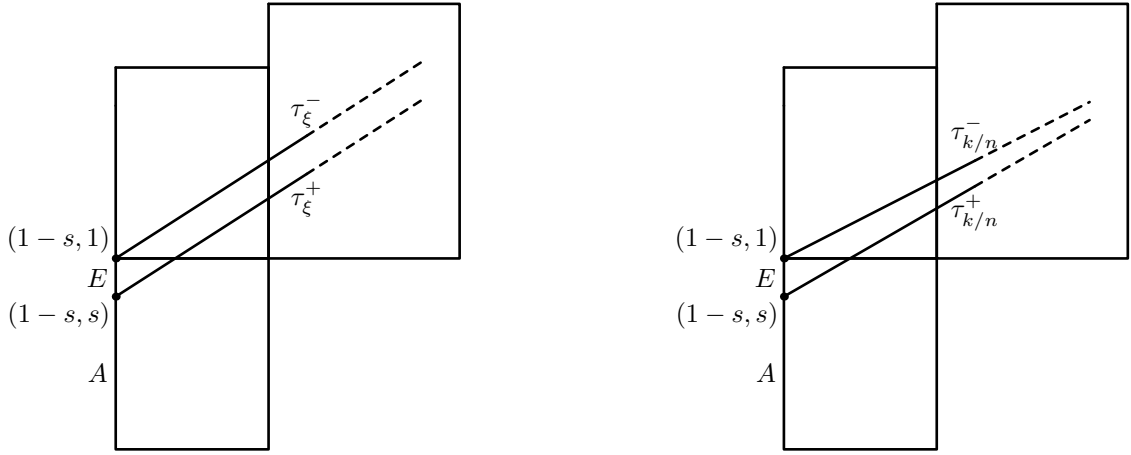

\includegraphics{surfacefig-4.mps}
\hspace{1in}
\includegraphics{surfacefig-5.mps}
\caption{The trajectories $\tau_\xi^-$ and $\tau_\xi^+$, 
drawn on a partial stacking diagram.
{\sc Left:} $\xi \notin \Q$, as in \S\ref{irrational case}. 
{\sc Right:} $\xi = k/n \in \Q$, as in \S\ref{rational case}}
\label{F:tauplusminus}
\end{figure}

Let $(x_j,y_j)$ be the coordinates of the upper right corner 
of the $j$th box in the stacking diagram of $w$. Combining 
Equations \eqref{Eq:Rjcoord}, \eqref{Eq:ABlength}, and 
\eqref{Eq:lambdacount}, we have 
\begin{align*}
x_j 
&= 1 + 
  \sum_{i=1}^{\lfloor j/(\xi+1) \rfloor} 
  \left(\frac{1}{s}\right)^{i-1}
 = 1 - s + \sum_{i=0}^{\lfloor j/(\xi+1) \rfloor} 
  \left(\frac{1}{s}\right)^{i-1}
 = 1 - s + s \sum_{i=0}^{\lfloor j/(\xi+1) \rfloor} 
  \left(\frac{1}{s}\right)^i \\
&= 1 - s 
  + s \cdot 
    \frac{(1/s)^{\lfloor j/(\xi+1) \rfloor + 1} - 1}{1/s - 1}
 = 1 - s 
  + \frac{s}{1 - s}
    \left(
    \left(\frac{1}{s}\right)^{\lfloor j/(\xi+1) \rfloor} 
    - s
    \right)
\end{align*}
and
\[
y_j 
= \left(\frac1s\right)^{\lfloor j/(\xi+1) \rfloor}
   + \sum_{i=1}^{\lceil j\xi/(\xi+1) \rceil}
     \left(\frac1s\right)^{\lfloor(i-1)/\xi\rfloor} 
= \left(\frac1s\right)^{\lfloor j/(\xi+1) \rfloor} 
   + \sum_{i=0}^{\lfloor j\xi/(\xi+1) \rfloor}
     \left(\frac1s\right)^{\lfloor i/\xi\rfloor}
\]
where we have used the fact that 
$\lceil j\xi/(\xi+1) \rceil - 1 
= \lfloor j\xi/(\xi+1) \rfloor$ 
because $\xi$ is irrational.

The development of $\tau_\xi^-$ into $\R^2$ with starting 
point $(1-s,1)$ is the line $y = m(x - 1 + s) + 1$. 
At $x_j$, the $y$-coordinate of this line (using the form 
of $\Delta_s^\pm(\xi)$ from Theorem~\ref{T:delta}(vi)) is 
\begin{align*}
y_j^-
&=\left(\frac{1-s}{s} 
  \sum_{\ell=0}^\infty s^{\lfloor \ell/\xi \rfloor}
  \right)\cdot
  \frac{s}{1 - s}
  \left(
  \left(\frac{1}{s}\right)^{\lfloor j/(\xi+1) \rfloor} 
   - s
  \right) + 1 \\
&= \left(
   \left(\frac{1}{s}\right)^{\lfloor j/(\xi+1) \rfloor} - s
   \right) \sum_{\ell=0}^\infty s^{\lfloor \ell/\xi \rfloor} 
   + 1
\end{align*}
Similarly, the development of $\tau_\xi^+$ into $\R^2$ 
starting at $(1-s,s)$ is the line $y = m(x - 1 + s) + s$. 
At $x_j$, the $y$-coordinate of this line is 
\begin{align*}
y_j^+ 
= y_j^- - 1 + s
= \left(
   \left(\frac{1}{s}\right)^{\lfloor j/(\xi+1) \rfloor} - s
   \right) \sum_{\ell=0}^\infty s^{\lfloor \ell/\xi \rfloor} 
   + s
\end{align*}

We wish to compare the value of $y_j$ with that of either 
$y_j^-$ or $y_j^+$, depending on whether $w_{j+1}$ is $A$ 
or $B$. To wit: 
\begin{itemize}
\item If $w_{j+1} = A$, then we should have $y_j^- < y_j$, 
meaning that the line $\tau_\xi^-$ crosses below the vertex 
$(x_j,y_j)$ in the stacking diagram for $w$. We prove this 
in \S\ref{Acrossing}.
\item if $w_{j+1} = B$, then we should have $y_j^+ > y_j$, 
meaning that the line $\tau_\xi^+$ crosses above the vertex 
$(x_j,y_j)$ in the stacking diagram for $w$. We prove this 
in \S\ref{Bcrossing}.
\end{itemize}
Let us illustrate with the cases $j = 0$ and $j = 1$. 
We already know that $w_1 = B$ and $(x_0,y_0) = (1,1)$; 
because $\xi > 0$ we have $m > (1-s)/s$, and so 
\[
y_0^+ = m(x_0 - 1 + s) + s = m(1 - 1 + s) + s > (1 - s) + s 
= y_0,
\]
as desired. At the next step, because we assume $\xi < 1$, 
we know that $1/2 < 1/(\xi+1) < 1$, and so $w_2 = A$ by 
Equation \eqref{Eq:Acond}. At the same time, 
$(x_1,y_1) = (1,2)$ and $m < 1/s$, so 
\[
y_1^- 
= m(x_1 - 1 + s) + 1 
= m(1 - 1 + s) + 1
< 1 + 1 = y_1;
\]
again the desired condition is met.

In \S\ref{Acrossing} and \S\ref{Bcrossing}, we will assume 
$j \ge 1$.

\subsubsection{Crossing an $A$ edge}\label{Acrossing}

Suppose $w_{j+1} = A$. Then the formulas in \eqref{Eq:Acond} 
and \eqref{Eq:Bcond}, along with the assumptions that 
$\xi \notin \Q$ and $j \ge 1$, respectively imply the 
following equalities:
\stepcounter{equation}
\begin{gather*}
\label{Eq:Aedgeconda}\tag{\arabic{equation}a}
\lfloor (j+1)/(\xi+1) \rfloor 
= \lfloor j/(\xi+1) \rfloor + 1, \\ 
\label{Eq:Aedgecondb}\tag{\arabic{equation}b}
\lfloor (j+1)\xi/(\xi+1) \rfloor 
= \lfloor j\xi/(\xi+1) \rfloor.
\end{gather*}
We will need both of these. 

We aim to show that $y_j > y_j^-$, 
or equivalently $y_j - y_j^- > 0$. Substituting the formulas 
that we found above for $y_j$ and $y_j^-$, we have 
\begin{align*}
y_j - y_j^- 
&= \left(\frac1s\right)^{\lfloor j/(\xi+1) \rfloor} 
   + \sum_{i=0}^{\lfloor j\xi/(\xi+1) \rfloor}
     \left(\frac1s\right)^{\lfloor i/\xi\rfloor}
   - \left(
   \left(\frac{1}{s}\right)^{\lfloor j/(\xi+1) \rfloor} - s
   \right) \sum_{\ell=0}^\infty s^{\lfloor \ell/\xi \rfloor} 
   - 1 \\
&= \left(\frac1s\right)^{\lfloor j/(\xi+1) \rfloor} 
   + \sum_{i=1}^{\lfloor j\xi/(\xi+1) \rfloor}
     \left(\frac1s\right)^{\lfloor i/\xi\rfloor}
   + \sum_{\ell=0}^\infty s^{\lfloor \ell/\xi \rfloor+1} 
   - \sum_{\ell=0}^\infty 
     s^{\lfloor\ell/\xi\rfloor - \lfloor j/(\xi+1) \rfloor} \\
&= \sum_{i=-\lfloor j\xi/(\xi+1) \rfloor}^{-1}
     s^{\lceil i/\xi\rceil}
   + \sum_{\ell=0}^\infty s^{\lfloor \ell/\xi \rfloor+1} 
   - \sum_{\ell=1}^\infty 
     s^{\lfloor\ell/\xi\rfloor - \lfloor j/(\xi+1) \rfloor} \\
&= \sum_{\ell=-\lfloor j\xi/(\xi+1) \rfloor}^\infty 
     s^{\lfloor \ell/\xi \rfloor+1} 
   - \sum_{\ell=1}^\infty 
     s^{\lfloor\ell/\xi\rfloor - \lfloor j/(\xi+1) \rfloor}, 
\end{align*}
where we have used the facts that $\lceil-a\rceil = 
-\lfloor a\rfloor$ for all $a \in \R$ and 
$\lceil i/\xi \rceil = \lfloor i/\xi \rfloor + 1$ for all 
nonzero integers $i$ since $\xi \notin \Q$.

Start with the inequality
\[
\frac{(j+1)\xi}{\xi+1} < 
\left\lfloor \frac{(j+1)\xi}{\xi+1} \right\rfloor + 1,
\]
which is true by the definition of the floor function. 
Using \eqref{Eq:Aedgecondb}, this is equivalent to 
\[
\frac{(j+1)\xi}{\xi+1} < 
\left\lfloor \frac{j\xi}{\xi+1} \right\rfloor + 1.
\]
Next we subtract both sides from $\ell$ and divide by 
$\xi$ to get 
\[
\frac{1}{\xi}
\left(
\ell - \left\lfloor \frac{j\xi}{\xi+1} \right\rfloor - 1
\right) < 
\frac{\ell}{\xi} - \frac{j+1}{\xi+1} 
\qquad\text{for all $\ell \ge 1$.}
\]
Applying the floor function to each side, we obtain 
\[
\left\lfloor \frac{1}{\xi}
   \left(\ell 
   - \left\lfloor \frac{j\xi}{\xi+1} \right\rfloor - 1\right)
   \right\rfloor \le 
\left\lfloor \frac{\ell}{\xi} - \frac{j+1}{\xi+1} \right\rfloor
\qquad\text{for all $\ell \ge 1$.}
\]
Now, if $a,b \in \R$, it is true that 
$\lfloor a - b \rfloor \le 
\lfloor a \rfloor - \lfloor b \rfloor$, and so 
\[
\left\lfloor \frac{1}{\xi}
   \left(\ell 
   - \left\lfloor \frac{j\xi}{\xi+1} \right\rfloor - 1\right)
   \right\rfloor \le 
\left\lfloor \frac{\ell}{\xi} \right\rfloor 
- \left\lfloor \frac{j+1}{\xi+1} \right\rfloor
\qquad\text{for all $\ell \ge 1$.}
\]
Moreover, the preceding equality is strict except possibly when 
$\{\ell/\xi\} = \{(j+1)/(\xi+1)\}$, which is true for at most 
one value of $\ell$. Using \eqref{Eq:Aedgeconda}, this last 
inequality is equivalent to
\[
\left\lfloor \frac{1}{\xi}
   \left(\ell 
   - \left\lfloor \frac{j\xi}{\xi+1} \right\rfloor - 1\right)
   \right\rfloor + 1 \le 
\left\lfloor \frac{\ell}{\xi} \right\rfloor 
- \left\lfloor \frac{j}{\xi+1} \right\rfloor
\qquad\text{for all $\ell \ge 1$.}
\]
Because $0 < s < 1$, this implies 
\[
s^{\left\lfloor 
   (\ell - \lfloor j\xi/(\xi+1) \rfloor - 1)/\xi 
   \right\rfloor + 1} \ge
s^{\lfloor \ell/\xi \rfloor - \lfloor j/(\xi+1) \rfloor} 
\qquad\text{for all $\ell \ge 1$.}
\]
Therefore 
\[
\sum_{\ell=-\lfloor j\xi/(\xi+1)\rfloor}^{\infty} 
  s^{\lfloor \ell/\xi \rfloor+1} > 
\sum_{\ell=1}^\infty 
  s^{\lfloor \ell/\xi \rfloor 
  - \lfloor j/(\xi+1) \rfloor}
\]
because the series on the left is term-by-term greater than 
or equal to the series on the right, with all terms except 
possibly one being strictly greater. This last inequality 
is equivalent to $y_j - y_j^- > 0$, and so the proof that 
$y_j > y_j^-$ when $w_{j+1} = A$ is complete.

% \subsubsection*{Extra stuff for now (from earlier draft)}

% Fortunately, the expressions for $y_j$ and $y_j'$ have 
% the same first term, so we just need to establish that 
% \[
% \sum_{i=0}^{\lfloor j\xi/(\xi+1) \rfloor}
%      \left(\frac1s\right)^{\lfloor i/\xi\rfloor} 
% - \left(
%     \left(\frac{1}{s}\right)^{\lfloor j/(\xi+1) \rfloor} 
%     - s
%     \right) \sum_{\ell=1}^\infty s^{\lfloor \ell/\xi \rfloor}
% \begin{cases}
% > 0 & \text{when crossing an $A$ edge} \\
% < 0 & \text{when crossing a $B$ edge}
% \end{cases}
% \]
% First note that 
% \begin{align*}
% \sum_{i=0}^{\lfloor j\xi/(\xi+1) \rfloor}
%      \left(\frac1s\right)^{\lfloor i/\xi\rfloor} 
% + s \sum_{\ell=1}^\infty s^{\lfloor \ell/\xi \rfloor}
% &= 1 
%    + \sum_{i=-\lfloor j\xi/(\xi+1)\rfloor}^{-1} 
%      s^{\lceil i/\xi \rceil}
%    + \sum_{\ell=1}^\infty s^{\lfloor \ell/\xi \rfloor + 1} \\
% &= 1 - s
%    + \sum_{i=-\lfloor j\xi/(\xi+1)\rfloor}^{-1} 
%      s^{\lfloor i/\xi \rfloor + 1}
%    + \sum_{\ell=0}^\infty s^{\lfloor \ell/\xi \rfloor + 1} \\
% &= 1 - s
%    + \sum_{\ell=-\lfloor j\xi/(\xi+1)\rfloor}^{\infty} 
%      s^{\lfloor \ell/\xi \rfloor + 1} \\
% \end{align*}
% so the desired condition becomes 
% \begin{equation}\label{Eq:compare}
% 1 - s
%    + \sum_{\ell=-\lfloor j\xi/(\xi+1)\rfloor}^{\infty} 
%      s^{\lfloor \ell/\xi \rfloor + 1}
%     - \sum_{\ell=1}^\infty 
%       s^{\lfloor \ell/\xi \rfloor - \lfloor j/(\xi+1) \rfloor}
% \begin{cases}
% > 0 & \text{when crossing an $A$ edge} \\
% < 0 & \text{when crossing a $B$ edge}
% \end{cases}
% \end{equation}

\subsubsection{Crossing a $B$ edge}\label{Bcrossing}

Suppose $w_{j+1} = B$. From formulas \eqref{Eq:Acond} 
and \eqref{Eq:Bcond} we obtain:
\stepcounter{equation}
\begin{gather*}
\label{Eq:Bedgeconda}\tag{\arabic{equation}a}
\lfloor (j+1)/(\xi+1) \rfloor 
= \lfloor j/(\xi+1) \rfloor, \\
\label{Eq:Bedgecondb}\tag{\arabic{equation}b}
\lfloor (j+1)\xi/(\xi+1) \rfloor 
= \lfloor j\xi/(\xi+1) \rfloor + 1.
\end{gather*}

We want to show that $y_j^+ > y_j$; because 
$y_j^+ = y_j^- - 1 + s$, this is equivalent to showing 
\[
\sum_{\ell=1}^\infty 
s^{\lfloor\ell/\xi\rfloor-\lfloor j/(\xi+1)\rfloor}
- \sum_{\ell=-\lfloor j\xi/(\xi+1)\rfloor}^\infty
  s^{\lfloor\ell/\xi\rfloor+1}
> 1 - s.
\]
Substitute using \eqref{Eq:Bedgeconda} into the exponent 
of each term in the first sum and  \eqref{Eq:Bedgecondb} 
into the bottom index of the second sum, then re-index so 
that the left side becomes 
\begin{align*}
& \sum_{\ell=1}^\infty 
   s^{\left\lfloor\frac\ell\xi\right\rfloor
   -\left\lfloor\frac{j+1}{\xi+1}\right\rfloor}
   - \sum_{\ell=1-\lfloor (j+1)\xi/(\xi+1)\rfloor}^\infty
   s^{\left\lfloor\frac\ell\xi\right\rfloor+1} \\
=\ & \sum_{\ell=1}^\infty 
   s^{\left\lfloor\frac\ell\xi\right\rfloor
   -\left\lfloor\frac{j+1}{\xi+1}\right\rfloor}
   - \sum_{\ell=1}^\infty
   s^{\left\lfloor\frac\ell\xi
   -\frac1\xi
    \left\lfloor\frac{(j+1)\xi}{\xi+1}\right\rfloor
    \right\rfloor+1} \\
=\ & \sum_{\ell=1}^\infty 
   \left(s^{\left\lfloor\frac\ell\xi\right\rfloor
   -\left\lfloor\frac{j+1}{\xi+1}\right\rfloor}
   - s^{\left\lfloor\frac\ell\xi
   -\frac1\xi
    \left\lfloor\frac{(j+1)\xi}{\xi+1}\right\rfloor
    \right\rfloor+1}
\right)
\end{align*}
To ease notation in the rest of this section, let 
$\sigma$ be this last sum, and set $j' = j+1$, so that 
\[
\sigma 
= \sum_{\ell=1}^\infty 
  \left(
  s^{\left\lfloor\frac\ell\xi\right\rfloor
  - \left\lfloor\frac{j'}{\xi+1}\right\rfloor}
  - s^{\left\lfloor\frac\ell\xi
    -\frac1\xi
    \left\lfloor\frac{j'\xi}{\xi+1}\right\rfloor
    \right\rfloor+1}
  \right).
\]
Each term in $\sigma$ is non-negative by the following 
lemma, where $a=\ell$, $b=\xi$, and $c=j'/(\xi+1)$.

\begin{lemma}\label{L:floorineq}
For all $a, b, c > 0$, 
\begin{equation}\label{Eq:floorineq}
\left\lfloor\frac{a}{b}\right\rfloor-\left\lfloor c\right\rfloor \le \left\lfloor\frac{a}{b}-\frac{\left\lfloor bc\right\rfloor}{b}\right\rfloor +1.
\end{equation}
A sufficient condition for \eqref{Eq:floorineq} to be strict is 
\begin{equation}\label{Eq:fracineq}
\left\{\frac{a}{b}\right\}+\frac{\left\{bc\right\}}{b} 
\ge \{c\},
\end{equation}
and whenever \eqref{Eq:fracineq} is satisfied with equality, 
we have
\begin{equation}\label{Eq:flooreq}
\left\lfloor\frac{a}{b}\right\rfloor-\left\lfloor c\right\rfloor
=\left\lfloor\frac{a}{b}-\frac{\left\lfloor bc\right\rfloor}{b}\right\rfloor.
\end{equation}
\end{lemma}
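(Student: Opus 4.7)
The plan is to reduce everything to a single fractional quantity by decomposing each real number into its integer and fractional parts. Since $\lfloor bc\rfloor = bc - \{bc\}$, we have $\lfloor bc\rfloor/b = c - \{bc\}/b$, hence
\[
\frac{a}{b} - \frac{\lfloor bc\rfloor}{b} = \frac{a}{b} - c + \frac{\{bc\}}{b}.
\]
Writing $a/b = \lfloor a/b\rfloor + \{a/b\}$ and $c = \lfloor c\rfloor + \{c\}$, I would set
\[
D := \lfloor a/b\rfloor - \lfloor c\rfloor \in \Z, \qquad F := \{a/b\} - \{c\} + \{bc\}/b \in \R,
\]
so that
\[
\left\lfloor \frac{a}{b} - \frac{\lfloor bc\rfloor}{b}\right\rfloor = D + \lfloor F\rfloor,
\]
using the fact that adding an integer commutes with $\lfloor\cdot\rfloor$.

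With this substitution, inequality \eqref{Eq:floorineq} becomes simply $\lfloor F\rfloor \ge -1$, i.e.\ $F > -1$. This follows immediately from the elementary bounds $\{a/b\} \ge 0$, $\{bc\}/b \ge 0$, and $\{c\} < 1$, which force $F > 0 - 1 + 0 = -1$. This establishes the main inequality.

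For the two remaining assertions: condition \eqref{Eq:fracineq} is exactly the statement $F \ge 0$, which yields $\lfloor F\rfloor \ge 0$ and therefore turns \eqref{Eq:floorineq} into a strict inequality (the gap between the two sides being at least $1$). Finally, when \eqref{Eq:fracineq} holds with equality, we have $F = 0$ exactly, so $\lfloor F\rfloor = 0$, and the earlier rewriting immediately gives \eqref{Eq:flooreq}.

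There is no real obstacle here; the only thing to be careful about is the strictness direction in the bound $\{c\} < 1$ (as opposed to the weak inequalities $\{a/b\},\{bc\}/b \ge 0$), which is what produces $F > -1$ rather than merely $F \ge -1$ and thereby yields $\lfloor F\rfloor \ge -1$ unconditionally.
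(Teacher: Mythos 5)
Your proof is correct and rests on the same key identity as the paper's, namely $\lfloor a/b - \lfloor bc\rfloor/b\rfloor = \lfloor a/b\rfloor - \lfloor c\rfloor + \lfloor \{a/b\} - \{c\} + \{bc\}/b\rfloor$ (your $D + \lfloor F\rfloor$), from which the strictness criterion and the equality case follow exactly as in the paper. The only difference is that you also deduce the basic inequality \eqref{Eq:floorineq} from this same identity via the bound $F > -1$, whereas the paper establishes it separately by a short chain of floor-monotonicity estimates; this is a minor unification rather than a different method, and your handling of the one strict bound $\{c\} < 1$ is exactly right.
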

\begin{proof}
Start with 
\[\left\lfloor\frac{a}{b}\right\rfloor= \left\lfloor\frac{a}{b}-\frac{bc}{b}+c\right\rfloor \le \left\lfloor\frac{a}{b}-\frac{\left\lfloor bc\right\rfloor}{b}+c\right\rfloor \le \left\lfloor\frac{a}{b}-\frac{\left\lfloor bc\right\rfloor}{b}\right\rfloor+\left\lfloor c\right\rfloor+1.
\]
Now subtract $\lfloor c \rfloor$ from the first and last 
expressions to obtain \eqref{Eq:floorineq}.  

Next, notice that 
\begin{align*}
\left\lfloor\frac{a}{b}
- \frac{\left\lfloor bc\right\rfloor}{b}\right\rfloor 
&=\left\lfloor\frac{a}{b}-\frac{bc-\{bc\}}{b}\right\rfloor \\
&=\left\lfloor\left(\left\lfloor\frac{a}{b}\right\rfloor
+\left\{\frac{a}{b}\right\}\right)-(\lfloor c\rfloor
+\{ c\})+\frac{\{ bc\}}{b}\right\rfloor \\
&=\left\lfloor\frac{a}{b}\right\rfloor
-\lfloor c\rfloor+\left\lfloor\left\{\frac{a}{b}\right\}
-\{ c\}+\frac{\{ bc\}}{b}\right\rfloor 
\end{align*}
so that \eqref{Eq:floorineq} is strict if \eqref{Eq:fracineq} is satisfied, and whenever $\left\{\frac{a}{b}\right\}+\frac{\left\{bc\right\}}{b}-\left\{c\right\}=0$, equality \eqref{Eq:flooreq} also holds.
\end{proof}

Recall that we can assume $0 < \xi < 1$, thanks to the 
affine group of $X_s$, so let $\xi = [0;a_1,a_2,a_3,\dots]$ 
be the (infinite) continued fraction of $\xi$. As in the 
statement of Lemma~\ref{convergent_formulas}, let 
$P_i/Q_i$ be the convergents of $\xi$, $P'_i/Q'_i$ the 
convergents of $1/(\xi+1)$, and $P''_i/Q''_i$ the 
convergents of $\xi/(\xi+1)$.

Our goal now is to show that $\sigma > 1 - s$. 
Lemma~\ref{L:floorineq} suggests that we want to find 
values of $\ell$ such that 
\begin{equation}\label{remainders_ineq}
\bigg\{\frac\ell\xi\bigg\}
    +\frac1\xi\bigg\{\frac{j'\xi}{\xi+1}\bigg\}\ge \bigg\{\frac{j'}{\xi+1}\bigg\}
\end{equation}
so that we can consider only positive (i.e., nonzero) 
terms in the expression for $\sigma$.  We find the 
desired values of $\ell$ with the help of 
Lemma~\ref{convergent_formulas}. We consider two cases: 
\begin{itemize}
\item[(I)] $j'$ is the denominator of an intermediate fraction 
of $1/(\xi+1)$, such that $\{j'/(\xi+1)\}$ is a near approach 
to $1$ (see section \ref{Continued fractions}); or
\item[(II)] $j'$ is any other value.
\end{itemize}

In case (I), we suppose 
$j' = P_{2i-2} + Q_{2i-2} + \alpha (P_{2i-1} + Q_{2i-1})$ 
for some $0\le\alpha\le a_{2i}$. By 
Lemma~\ref{convergent_formulas}, this is the same as 
\[
j' = Q'_{2i-1} + \alpha Q'_{2i}
= Q''_{2i-2} + \alpha Q''_{2i-1}.
\]
Let $\ell=P_{2i-2}+\alpha P_{2i-1}$.  
The left side of \eqref{remainders_ineq} becomes 
\begin{align*}
\bigg\{\frac\ell\xi\bigg\}
    +\frac1\xi\bigg\{\frac{j'\xi}{\xi+1}\bigg\}
    &=\bigg\{\frac{P_{2i-2}+\alpha P_{2i-1}}{\xi}\bigg\}
+\frac{Q'_{2i-1}+\alpha Q'_{2i}}{\xi+1} -\frac1\xi\left\lfloor\frac{(Q''_{2i-2}+\alpha Q''_{2i-1})\xi}{\xi+1}\right\rfloor\\
&=\bigg\{\frac{P_{2i-2}+\alpha P_{2i-1}}{\xi}\bigg\}
+\frac{Q'_{2i-1}+\alpha Q'_{2i}}{\xi+1} -\frac{P''_{2i-2}+\alpha P''_{2i-1}}{\xi}\\
&=\frac{Q'_{2i-1}+\alpha Q'_{2i}}{\xi+1} -\left\lfloor\frac{P_{2i-2}+\alpha P_{2i-1}}{\xi}\right\rfloor  
\end{align*}
by relation \eqref{int_frac_floor} and the fact that $P''_i=P_i$.  Since 
$\frac{P_{2i-2}+\alpha P_{2i-1}}{Q_{2i-2}+\alpha Q_{2i-1}}
<\xi$, we have 
$\frac{P_{2i-2}+\alpha P_{2i-1}}{\xi} 
< Q_{2i-2}+\alpha Q_{2i-1}$ and 
$\left\lfloor
 \frac{P_{2i-2}+\alpha P_{2i-1}}{\xi}
 \right\rfloor = Q_{2i-2}+\alpha Q_{2i-1}-1$. 
But $Q_{2i-2}+\alpha Q_{2i-1}-1 = 
P'_{2i-1}+\alpha P'_{2i}-1 = 
\left\lfloor
\frac{Q'_{2i-1}+\alpha Q'_{2i}}{\xi+1}
\right\rfloor$ by relation \eqref{int_frac_floor}.  
We then see that 
\[
\frac{Q'_{2i-1}+\alpha Q'_{2i}}{\xi+1} -\left\lfloor\frac{P_{2i-2}+\alpha P_{2i-1}}{\xi}\right\rfloor=\frac{Q'_{2i-1}+\alpha Q'_{2i}}{\xi+1} -\left\lfloor\frac{Q'_{2i-1}+\alpha Q'_{2i}}{\xi+1}\right\rfloor=\bigg\{\frac{Q'_{2i-1}+\alpha Q'_{2i}}{\xi+1}\bigg\},
\]
so \eqref{remainders_ineq} is satisfied with equality 
if $\ell=P_{2i-2}+\alpha P_{2i-1}$ and 
$j' = P_{2i-2}+Q_{2i-2}+\alpha(P_{2i-1}+Q_{2i-1})$. 
By Lemma \ref{L:floorineq}, because \eqref{remainders_ineq} 
is satisfied with equality, we have 
\[
\left\lfloor\frac\ell\xi
   -\frac1\xi
    \left\lfloor\frac{j'\xi}{\xi+1}\right\rfloor
    \right\rfloor+1=\left\lfloor\frac\ell\xi\right\rfloor
   -\left\lfloor\frac{j'}{\xi+1}\right\rfloor+1.
\]
Since $\xi\notin\mathbb{Q}$, $\big\{\frac{\ell}{\xi}\big\}$ 
can be made as close to $1$ as we like so long as $\ell$ 
can be chosen sufficiently large.  That is, for some 
$\ell \gg P_{2i-2}+\alpha P_{2i-1}$, we have 
$\big\{\frac{\ell}{\xi}\big\} > 
\big\{\frac{P_{2i-2}+\alpha P_{2i-1}}{\xi}\big\}$.  
In such a case, inequality \eqref{remainders_ineq} is 
strict. 

With these findings, we see for 
$j' = P_{2i-2}+Q_{2i-2}+\alpha(P_{2i-1}+Q_{2i-1})$ that
\begin{align*}
% &\sum_{\ell=1}^\infty 
% \left(s^{\left\lfloor\frac\ell\xi\right\rfloor
%    -\left\lfloor\frac{j+1}{\xi+1}\right\rfloor}
% - s^{\left\lfloor\frac\ell\xi
%    -\frac1\xi
%     \left\lfloor\frac{(j+1)\xi}{\xi+1}\right\rfloor
%     \right\rfloor+1}
% \right)\\
\sigma &\ge \sum_{\ell=P_{2i-2}+\alpha P_{2i-1}}^\infty 
\left(s^{\left\lfloor\frac\ell\xi\right\rfloor
   -\left\lfloor\frac{j'}{\xi+1}\right\rfloor}
- s^{\left\lfloor\frac\ell\xi
   -\frac1\xi
    \left\lfloor\frac{j'\xi}{\xi+1}\right\rfloor
    \right\rfloor+1}
\right)\\
&= (1-s)
   s^{
      \left\lfloor
      \frac{P_{2i-2}+\alpha P_{2i-1}}{\xi}
      \right\rfloor
      - \left\lfloor
        \frac{Q'_{2i-1}+\alpha Q'_{2i}}{\xi+1}
        \right\rfloor}
   + \sum_{\ell=P_{2i-2}+\alpha P_{2i-1}+1}^\infty 
   \left(s^{\left\lfloor\frac\ell\xi\right\rfloor
   -\left\lfloor\frac{j'}{\xi+1}\right\rfloor}
- s^{\left\lfloor\frac\ell\xi
   -\frac1\xi
    \left\lfloor\frac{j'\xi}{\xi+1}\right\rfloor
    \right\rfloor+1}
\right)\\
&= (1-s)
   + \sum_{\ell=P_{2i-2}+\alpha P_{2i-1}+1}^\infty 
     \left(
     s^{\left\lfloor\frac\ell\xi\right\rfloor
        - \left\lfloor\frac{j'}{\xi+1}\right\rfloor}
     - s^{\left\lfloor\frac\ell\xi
          -\frac1\xi
          \left\lfloor\frac{j'\xi}{\xi+1}\right\rfloor
          \right\rfloor+1}
   \right)
\end{align*}
since 
$\left\lfloor
 \frac{P_{2i-2}+\alpha P_{2i-1}}{\xi}
 \right\rfloor
 = \left\lfloor
   \frac{Q'_{2i-1}+\alpha Q'_{2i}}{\xi+1}
   \right\rfloor$, 
as shown above. We also saw that for some 
$\ell \gg P_{2i-2}+\alpha P_{2i-1}$, 
inequality \eqref{remainders_ineq} is strict, which 
implies that the final summation term above is positive.  
We now know that $\sigma > 1-s$ when 
$j'=P_{2i-2}+Q_{2i-2}+\alpha(P_{2i-1}+Q_{2i-1})$.  

As we recalled above, 
$\big\{\frac{P_{2i-2}+Q_{2i-2}
 + \alpha(P_{2i-1}+Q_{2i-1})}{\xi+1}\big\}
 = \big\{\frac{Q'_{2i-1}+\alpha Q'_{2i}}{\xi+1}\big\}$ 
describes every near approach to $1$. This implies that 
for all $j' \neq Q'_{2i-1}+\alpha Q'_{2i}$, either 
$\big\{\frac{j'}{\xi+1}\big\}
< \big\{\frac{1}{\xi+1}\big\}$ or 
$\big\{\frac{Q'_{2i-1}+(\alpha-1)Q'_{2i}}{\xi+1}\big\}
< \big\{\frac{j'}{\xi+1}\big\}
< \big\{\frac{Q'_{2i-1}+\alpha Q'_{2i}}{\xi+1}\big\}$ 
for some $i\ge 1$ and $1\le\alpha\le a_{2i}$.  With the 
help of Lemma \ref{(j+1)BtwnIntFracs}, we can now show 
that $\sigma > 1-s$ also in case (II).
\begin{lemma}\label{(j+1)BtwnIntFracs}
If $\big\{\frac{j'}{\xi+1}\big\}<\big\{\frac{1}{\xi+1}\big\}$ 
or $\big\{\frac{Q'_{2i-1}+(\alpha-1)Q'_{2i}}{\xi+1}\big\}
< \big\{\frac{j'}{\xi+1}\big\}
< \big\{\frac{Q'_{2i-1}+\alpha Q'_{2i}}{\xi+1}\big\}$ 
for some \(i\ge 1\) and \(1\le\alpha\le a_{2i}\), then 
\[
\bigg\{\frac\ell\xi\bigg\}+\frac1\xi\bigg\{\frac{j'\xi}{\xi+1}\bigg\}>\bigg\{\frac{j'}{\xi+1}\bigg\}
\]
and
\[
\left\lfloor
\frac\ell\xi\right\rfloor - \left\lfloor\frac{j'}{\xi+1}
\right\rfloor<0
\]
for \(\ell=P_{2i-2}+\alpha P_{2i-1}\).
\end{lemma}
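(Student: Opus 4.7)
The strategy is to compare $j'$ with the ``nearest'' near approach to $1$, namely $j'_* := Q'_{2i-1} + \alpha Q'_{2i}$, and to transfer the exact identities that case~(I) establishes for the pair $(\ell, j'_*)$ into strict inequalities for $(\ell, j')$ using the hypothesis on fractional parts. In the first branch of the hypothesis I take $i = 1$ and $\alpha = 0$, which gives $\ell = 0$ and $j'_* = Q'_1 = 1$; the case~(I) calculation still produces $\{0/\xi\} + \tfrac{1}{\xi}\{\xi/(\xi+1)\} = \tfrac{1}{\xi+1} = \{1/(\xi+1)\}$ in this boundary instance, so the two branches can be handled uniformly.

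The key inputs I would quote from case~(I) are: (a) equation~\eqref{remainders_ineq} holds with equality at $(\ell, j'_*)$; and (b) $\lfloor \ell/\xi \rfloor = \lfloor j'_*/(\xi+1) \rfloor = Q_{2i-2} + \alpha Q_{2i-1} - 1$. For the first inequality of the lemma, I would use the identity $\{n\xi/(\xi+1)\} = 1 - \{n/(\xi+1)\}$, valid for every positive integer $n$ because $\xi \notin \Q$ makes $n/(\xi+1)$ irrational. Multiplying (a) through by $\xi$ and applying this identity yields $\xi \{\ell/\xi\} + 1 = (\xi+1)\{j'_*/(\xi+1)\}$, and the hypothesis $\{j'/(\xi+1)\} < \{j'_*/(\xi+1)\}$ upgrades this to $\xi \{\ell/\xi\} + 1 > (\xi+1)\{j'/(\xi+1)\}$, which (using the identity once more on the left) rearranges to the first desired inequality.

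For the second inequality, I would first prove that $j' > j'_*$. In the second branch, suppose for contradiction that $j' \le j'_*$. Then $j' \ne j'_*$ since their fractional parts differ, and either $j' < j'_k := Q'_{2i-1} + (\alpha-1)Q'_{2i}$, in which case $j'_k$'s status as a near approach to $1$ gives $\{j'/(\xi+1)\} \le \{j'_k/(\xi+1)\}$, violating the hypothesis; or $j'_k < j' < j'_*$, in which case $\{j'/(\xi+1)\} > \{j'_k/(\xi+1)\}$ makes $j'$ itself a new near approach strictly between the consecutive near approaches $j'_k$ and $j'_*$, a contradiction. In the first branch, the strict hypothesis $\{j'/(\xi+1)\} < 1/(\xi+1)$ excludes $j' = 1$, so $j' \ge 2 > 1 = j'_*$. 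Once $j' > j'_*$ is in hand, $j'/(\xi+1) > j'_*/(\xi+1)$ combined with $\{j'/(\xi+1)\} < \{j'_*/(\xi+1)\}$ forces $\lfloor j'/(\xi+1) \rfloor > \lfloor j'_*/(\xi+1) \rfloor$, which by (b) equals $\lfloor \ell/\xi \rfloor$, proving the second inequality.

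The main obstacle is the case analysis that establishes $j' > j'_*$; once that is done, the first inequality is a formal strengthening of the exact identity from case~(I) by the strict hypothesis on fractional parts, and the second is the automatic consequence of comparing floors of two numbers whose integer and fractional parts are reversed in order. The degenerate instantiation $i=1,\alpha=0$ for the first branch is a small but essential bookkeeping step that lets both branches be subsumed under the same formulas.
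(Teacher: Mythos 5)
Your proposal is correct and follows essentially the same route as the paper's proof: the first inequality is the paper's chain of (in)equalities (case-(I) equality plus the identity $\{n\xi/(\xi+1)\}=1-\{n/(\xi+1)\}$, sharpened by the strict hypothesis on fractional parts), and the second inequality comes from the same ``successive near approaches force $j'>Q'_{2i-1}+\alpha Q'_{2i}$'' argument followed by the floor comparison via $\lfloor \ell/\xi\rfloor=\lfloor (Q'_{2i-1}+\alpha Q'_{2i})/(\xi+1)\rfloor$. Your explicit degenerate instantiation $(i,\alpha)=(1,0)$, $\ell=0$, $j'_*=1$ for the first branch is a slightly tidier piece of bookkeeping than the paper's ``the proof is the same'' remark, but it is the same argument.
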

\begin{proof}
If 
$\big\{\frac{Q'_{2i-1}+(\alpha-1)Q'_{2i}}{\xi+1}\big\}
<\big\{\frac{j'}{\xi+1}\big\}
<\big\{\frac{Q'_{2i-1}+\alpha Q'_{2i}}{\xi+1}\big\}$, 
then
\begin{align*}
\bigg\{\frac\ell\xi\bigg\}
+\frac1\xi\bigg\{\frac{j'\xi}{\xi+1}\bigg\}
&=\bigg\{\frac{P_{2i-2}+\alpha P_{2i-1}}{\xi}\bigg\}
  +\frac1\xi\left(1-\bigg\{\frac{j'}{\xi+1}\bigg\}\right)\\
&>\bigg\{\frac{P_{2i-2}+\alpha P_{2i-1}}{\xi}\bigg\}
  +\frac1\xi
   \left(1-\bigg\{\frac{Q'_{2i-1}+\alpha Q'_{2i}}{\xi+1}\bigg\}\right)\\
&=\bigg\{\frac{P_{2i-2}+\alpha P_{2i-1}}{\xi}\bigg\}
 +\frac1\xi
  \bigg\{\frac{(Q'_{2i-1}+\alpha Q'_{2i})\xi}{\xi+1}\bigg\}\\
&=\bigg\{\frac{P_{2i-2}+\alpha P_{2i-1}}{\xi}\bigg\}
 +\frac1\xi
  \bigg\{\frac{(Q''_{2i-2}+\alpha Q''_{2i-1})\xi}{\xi+1}\bigg\}\\
&=\bigg\{\frac{Q'_{2i-1}+\alpha Q'_{2i}}{\xi+1}\bigg\}\\
&>\bigg\{\frac{j'}{\xi+1}\bigg\}.
\end{align*}
If $\big\{\frac{j'}{\xi+1}\big\}<\big\{\frac{1}{\xi+1}\big\}$, 
then also 
$\big\{\frac{j'}{\xi+1}\big\}<\big\{\frac{Q'_{2i-1}
 +\alpha Q'_{2i}}{\xi+1}\big\}$ 
and the proof is the same as above.  

Next, notice that 
$\big\{\frac{Q'_{2i-1}+(\alpha-1)Q'_{2i}}{\xi+1}\big\}$ and 
$\big\{\frac{Q'_{2i-1}+\alpha Q'_{2i}}{\xi+1}\big\}$ 
are successive near approaches to 1, so for 
$\big\{\frac{Q'_{2i-1}+(\alpha-1)Q'_{2i}}{\xi+1}\big\}
 <\big\{\frac{j'}{\xi+1}\big\}
 <\big\{\frac{Q'_{2i-1}+\alpha Q'_{2i}}{\xi+1}\big\}$, 
it must be true that $j' > Q'_{2i-1}+\alpha Q'_{2i}$ and 
$\left\lfloor\frac{j'}{\xi+1}\right\rfloor
 >\left\lfloor\frac{Q'_{2i-1}+\alpha Q'_{2i}}{\xi+1}\right\rfloor$.  Then 
\[
\left\lfloor\frac\ell\xi\right\rfloor
-\left\lfloor\frac{j'}{\xi+1}\right\rfloor
< \left\lfloor\frac{P_{2i-2}+\alpha P_{2i-1}}{\xi}\right\rfloor
  -\left\lfloor\frac{Q'_{2i-1}+\alpha Q'_{2i}}{\xi+1}\right\rfloor
=0.
\]
Similarly, if 
$\big\{\frac{j'}{\xi+1}\big\}<\big\{\frac{1}{\xi+1}\big\}$, 
then also $\left\lfloor\frac{j'}{\xi+1}\right\rfloor
> \left\lfloor\frac{1}{\xi+1}\right\rfloor$; the rest of the proof is identical.
\end{proof}

By Lemma \ref{(j+1)BtwnIntFracs}, if 
$j'\neq P_{2i-2}+Q_{2i-2}+\alpha(P_{2i-1}+Q_{2i-1})$, then 
\[
\sigma 
\ge (1-s)
    s^{\left\lfloor
       \frac{P_{2i-2}+\alpha P_{2i-1}}{\xi}
       \right\rfloor
       -\left\lfloor\frac{j'}{\xi+1}\right\rfloor}
> 1-s.
\]
We have now shown in both case (I) and case (II)---that is, 
for any $j' \ge 1$---that 
\[
\sum_{\ell=1}^\infty 
\left(s^{\left\lfloor\frac\ell\xi\right\rfloor
   -\left\lfloor\frac{j+1}{\xi+1}\right\rfloor}
- s^{\left\lfloor\frac\ell\xi
   -\frac1\xi
    \left\lfloor\frac{(j+1)\xi}{\xi+1}\right\rfloor
    \right\rfloor+1}
\right)>1-s,
\]
as desired. This completes the proof that 
$y_j < y_j^+$ when $w_{j+1} = B$.

\subsection{Comparing coordinates: rational case}
\label{rational case}

Now, suppose $0\le\xi<1$ with $\xi=k/n\in\mathbb{Q}$ and 
$\gcd(k,n)=1$.  We show parts (ii) and (iii) of Theorem 
\ref{T:cutting_sequences} together.  As in the irrational 
case, we consider two trajectories: 
\begin{itemize}
\item $\tau_{k/n}^-$, which has slope $m^-=\Delta_s^-(k/n)$ 
and starts at the top of edge $E$;
\item $\tau_{k/n}^+$, which has slope $m^+=\Delta_s^+(k/n)$ 
and starts at the bottom of edge $E$.
\end{itemize}
We again develop these trajectories in the plane 
$\mathbb{R}^2$ and show that they lie in the stacking diagram 
of $w=w(k/n)$.  Note that our conditions above imply that 
$\tau_{k/n}^-$ starts on the bottom of edge $A$ on $R_1$ and 
$\tau_{k/n}^+$ starts at the top of edge $A$ on $R_0$.  
Let $(x_{j,k/n},y_{j,k/n})$ be the coordinates of the upper 
right corner of the $j$th box in the stacking diagram of $w$.  
The heights of $\tau_{k/n}^-$ and $\tau_{k/n}^+$ at the 
$x$-value $x_{j,k/n}$ are described by 
\begin{equation}\label{y^-}
y_{j,k/n}^-=m^-(x_{j,k/n}-(1-s))+1
\end{equation}
and
\begin{equation}\label{y^+}
y_{j,k/n}^+=m^+(x_{j,k/n}-(1-s))+s,
\end{equation}
respectively.  We first show that $y_{j,k/n}^-=y_{j,k/n}^+=y_{j,k/n}$ for $j=k+n-1$, which implies $\tau_{k/n}^-$ lies above $\tau_{k/n}^+$ until these two trajectories intersect at the upper right corner of box $R_{k+n-1}$ in the stacking diagram of $w$. We then prove for all $j<k+n-1$ that $y_{j,k/n}^-<y_{j,k/n}$ whenever $w_{j+1}=A$ and $y_{j,k/n}^+>y_{j,k/n}$ whenever $w_{j+1}=B$.  Together, these will show that the linear trajectories represented by $\tau_{k/n}^-$ and $\tau_{k/n}^+$ start and end at a corner of $X_s$ and have the same cutting sequence $w(k/n)$.  That is, $\tau_{k/n}^-$ and $\tau_{k/n}^+$ represent saddle connections on the boundary of a cylinder whose closed trajectories have slopes $m\in\left(\Delta_s^-(k/n),\Delta_s^+(k/n)\right)$ and the same cutting sequences as that of a linear trajectory with slope $k/n$ starting at a corner of $T^2$.

We begin by showing $y_{j,k/n}^-=y_{j,k/n}^+$ for $j=k+n-1$.  That is,
$$\Delta_s^-(k/n)\left(x_{j,k/n}-(1-s)\right)+1=\Delta_s^+(k/n)\left(x_{j,k/n}-(1-s)\right)+s$$
by equations (\ref{y^-}) and (\ref{y^+}).  Rearranging terms, this is the same as showing that
\begin{equation}\label{y^+=y^-}
\left(\Delta_s^+(k/n)-\Delta_s^-(k/n)\right)\left(x_{j,k/n}-(1-s)\right)=1-s.\end{equation}
By Theorem \ref{T:delta}(vii), the difference between $\Delta_s^+(k/n)$ and $\Delta_s^-(k/n)$ is $s^{n-2}(1-s)^2/(1-s^n)$. Note that whenever $j=k+n-1$, 
\begin{equation}\label{x_{j,k/n}}
x_{j,k/n}=1+\sum_{i=1}^{\left\lfloor\frac{(k+n-1)n}{k+n}\right\rfloor}\left(\frac1s\right)^{i-1}=1+\sum_{i=1}^{n-1}\left(\frac1s\right)^{i-1}=1+\frac{\left(\frac1s\right)^{n-1}-1}{\frac1s-1}=1+\frac{s^{2-n}-s}{1-s}.
\end{equation}
With these two observations, the left-hand side of equation (\ref{y^+=y^-}) becomes
\begin{align*}
\frac{s^{n-2}(1-s)^2}{1-s^n}\left(\frac{s^{2-n}-s}{1-s}+s\right)&=(1-s)\frac{1-s^{n-1}+s^{n-1}(1-s)}{1-s^n}\\
&=(1-s)\frac{1-s^{n-1}+s^{n-1}-s^n}{1-s^n}=1-s
\end{align*}
so that $y_{j,k/n}^+=y_{j,k/n}^-$ for $j=k+n-1$.

Next, we show that $y^+_{j,k/n}=y_{j,k/n}$ for $j=k+n-1$.  Plugging in this value of $j$, we find 
\begin{equation}\label{y_{j,k/n}}
y_{j,k/n}=\left(\frac1s\right)^{\left\lfloor (k+n-1)n/k+n\right\rfloor}+\sum_{i=1}^{\left\lfloor(k+n)k/k+n\right\rfloor}\left(\frac1s\right)^{\left\lfloor (i-1)n/k\right\rfloor}=\left(\frac1s\right)^{n-1}+\sum_{i=1}^{k}\left(\frac1s\right)^{\left\lfloor (i-1)n/k\right\rfloor}.
\end{equation}
Equation (\ref{y^+}) and Theorem \ref{T:delta}(ix) tell us that
$$y^+_{j,k/n}=\left(\frac{1-s}{s}+\frac{s^{n-2}(1-s)}{1-s^n}\sum_{i=1}^{k}\left(\frac1s\right)^{\left\lfloor (i-1)n/k\right\rfloor}\right)(x_{j,k/n}-1+s)+s.$$
Using equation (\ref{x_{j,k/n}}), it becomes evident that 
\begin{equation}\label{x-1+s}
x_{j,k/n}-1+s=\frac{s^{2-n}-s}{1-s}+s=\frac{s^{2-n}-s+s(1-s)}{1-s}=\frac{s^{2-n}-s^2}{1-s}.
\end{equation}
Then $y^+_{j,k/n}$ becomes 
\begin{align*}
y^+_{j,k/n}&=\left(\frac{1-s}{s}+\frac{s^{n-2}(1-s)}{1-s^n}\sum_{i=1}^{k}\left(\frac1s\right)^{\left\lfloor (i-1)n/k\right\rfloor}\right)\left(\frac{s^{2-n}-s^2}{1-s}\right)+s\\
&=\left(\frac1s\right)^{n-1}-s+\sum_{i=1}^{k}\left(\frac1s\right)^{\left\lfloor (i-1)n/k\right\rfloor}+s=y_{j,k/n}.
\end{align*}

Lastly, we show for all $j<k+n-1$ that $y_{j,k/n}^-<y_{j,k/n}$ whenever $w_{j+1}=A$ and $y_{j,k/n}^+>y_{j,k/n}$ whenever $w_{j+1}=B$.  Consider a trajectory $\tau$ with slope $k/n$ that starts at a corner on $T^2$.  Certainly there exists a trajectory $\tau'$ with slope $\xi'>k/n$ for $\xi'\notin\mathbb{Q}$ starting at a corner on $T^2$ such that the first $k+n-1$ letters in the cutting sequences of both $\tau$ and $\tau'$ are identical.  Let $(x_{j,\xi'},y_{j,\xi'})$ be the coordinates of the upper right corner of the $j$th box in the stacking diagram of $w(\xi')$.  From the irrational case above, we know there exists a trajectory $\tau^-_{\xi'}$ starting at the top of edge $E$ on $X_s$ that lies below $y_{j,\xi'}$ whenever $w_{j+1}=A$.  But we chose $\xi'$ so that $y_{j,\xi'}=y_{j,k/n}$ for $j<k+n-1$.  Then $y^-_{j,\xi'}<y_{j,k/n}$ whenever $w_{j+1}=A$.  Since $\xi'>k/n$, we know that the slope of $\tau^-_{\xi'}$ is greater than that of $\tau^-_{k/n}$; that is, $\Delta_s^\pm(\xi')>\Delta_s^-(k/n)$.  This implies that $y^-_{j,k/n}<y^-_{j,\xi'}<y_{j,k/n}$ whenever $w_{j+1}=A$ for $j<k+n-1$ as claimed.  An analogous argument shows that $y^+_{j,k/n}>y_{j,k/n}$ whenever $w_{j+1}=B$ for $j<k+n-1$.  This proves parts (ii) and (iii) of Theorem \ref{T:cutting_sequences}.

To prove Theorem \ref{T:cutting_sequences}(iv), we consider the cylinder of closed trajectories corresponding to the saddle connections above.  Note that if not for our convention that the first letter in a cutting sequence is $B$, then the first and last letters of the cutting sequence of $\tau_{k/n}^-$ would be ambiguous since the trajectory begins and ends at the singularity of the surface.  If one were to consider trajectories starting on edge $E$ that lie strictly between $\tau_{k/n}^-$ and $\tau_{k/n}^+$ until intersecting the upper right most corner of $R_{k+n-1}$, it becomes clear that these are critical---rather than closed---trajectories.  Because $\tau_{k/n}^-$ has the same cutting sequence as a trajectory of slope $k/n$ on $T^2$, it crosses edge $A$ of $X_s$ $k$ times before returning to itself, with $k-1$ of these intersections occurring on the interior of edge $A$.  Since $\tau_{k/n}^+$ lies at the top of edge $A$, we wish to find the upper-most (interior) point on edge $A$ that $\tau_{k/n}^-$ crosses so that we can consider the corresponding cylinder of closed trajectories on $X_s$.  We claim that a trajectory of slope $\Delta_s^-(k/n)$ starting at a height of $\frac{s-s^n}{1-s^n}$ on edge $A$ of $X_s$ is the saddle connection described above.  It suffices to show that the line described by 
$$y^-=\Delta_s^-(k/n)\big(x-(1-s)\big)+\frac{s-s^n}{1-s^n}$$ 
intersects the right edge $A$ of $R_{k+n-1}$ at a height proportional to $\frac{s-s^n}{1-s^n}$ on the original edge $A$ of $R_0$ after scaling by $(1/s)^n$.  This will show that any other line with slope $\Delta_s^-(k/n)$ starting at height $\frac{s-s^n}{1-s^n}+\varepsilon$, for $\varepsilon\neq 0$, on edge $A$ of $R_0$ cannot represent the lower saddle connection of the cylinder starting at the uppermost point on $X_s$, for this line will intersect the right edge $A$ of $R_{k+n-1}$ at a point $\varepsilon$ higher or lower than did $y^-$ (the size of the right edge of $R_{k+n-1}$ has been scaled by at least $1/s$, so this line can't possibly be ``periodic'').  Now, a trajectory beginning at a height of $\frac{s-s^n}{1-s^n}$ starts at a distance of $s-\frac{s-s^n}{1-s^n}=\frac{s-s^{n+1}-s+s^n}{1-s^n}=s^n\frac{1-s}{1-s^n}$ from the top of edge $A$.  Assuming the trajectory crosses edge $A$ $n$ times, we scale this distance by $(1/s)^n$, so the distance from the upper right corner of $R_{k+n-1}$ to the trajectory on the right edge $A$ of said box should be $\frac{1-s}{1-s^n}$.  What we then aim to show is
$$\Delta_s^-(k/n)(x_{j,k/n}-(1-s))+\frac{s-s^n}{1-s^n}=y_{j,k/n}-\frac{1-s}{1-s^n}$$
for $j=k+n-1$.  Replacing $\Delta_s^-(k/n)$ with its value from Theorem \ref{T:delta}(ix) and using equation (\ref{x-1+s}), the left hand side becomes
\begin{align*}
&\Bigg(\frac{(s-s^n)(1-s)}{s^2(1-s^n)}+\frac{s^{n-2}(1-s)}{(1-s^n)}\sum_{i=1}^{k}\bigg(\frac1s \bigg)^{\left\lfloor(i-1) n/k \right\rfloor}\Bigg)\left(\frac{s^{2-n}-s^2}{1-s}\right)+\frac{s-s^n}{1-s^n}\\
=&\frac{(s-s^n)(s^{-n}-1)}{1-s^n}+\sum_{i=1}^{k}\bigg(\frac1s \bigg)^{\left\lfloor(i-1) n/k \right\rfloor}+\frac{s-s^n}{1-s^n}\\
=&\frac{s^{-n+1}-s-1+s^n+s-s^n}{1-s^n}+\sum_{i=1}^{k}\bigg(\frac1s \bigg)^{\left\lfloor(i-1) n/k \right\rfloor}\\
=&\bigg(\frac1s\bigg)^{n-1}+\sum_{i=1}^{k}\bigg(\frac1s\bigg)^{\left\lfloor (i-1)n/k\right\rfloor}-\frac{1-s}{1-s^n}.
\end{align*}
Equation (\ref{y_{j,k/n}}) tells us that this is the same as $y_{j,k/n}-\frac{1-s}{1-s^n}.$

Finally, we must show that a trajectory with slope 
$m \in (\Delta_s^-(k/n),\Delta_s^+(k/n))$ starting 
at a point on edge $A$ at height 
\begin{equation}\label{y_0}
y_0 
= \frac{s^2}{1-s}m 
  - \frac{s^n}{1-s^n} 
    \sum_{\ell=1}^{k} 
    \left(\frac1s\right)^{\left\lfloor\frac{(\ell-1)n}{k}\right\rfloor}
\end{equation}
is closed and has the same cutting sequence as a 
trajectory with slope $k/n$ on $T^2$.  Since we've 
already found the upper and lower saddle connections 
of this cylinder, we simply need to show that a trajectory 
with slope $m$ starting at $y_0$ on the left edge $A$ of $R_0$ will intersect 
the same point on the right edge $A$ of $R_{k+n-1}$.  
The line describing this trajectory is 
\[
y = m\big(x-(1-s)\big)+y_0.
\]
The trajectory's initial distance from the top of edge $A$ 
is $s-y_0$, and after the trajectory intersects $A$ $n$ times, 
this distance is scaled by $(1/s)^n$.  We then want the trajectory 
to be at a height of $y_{j,k/n}-(s-y_0)\big(\frac1s\big)^n$ whenever 
$x=x_{j,k/n}$ for $j=k+n-1$. In other words, we must show
\[
y_{j,k/n}-(s-y_0)\bigg(\frac1s\bigg)^n
= m\big(x_{j,k/n}-(1-s)\big)+y_0
\]
for $j=k+n-1$. Using equations \eqref{y_{j,k/n}} and \eqref{y_0}, 
the left hand side becomes
\begin{align*}
&\left(\frac1s\right)^{n-1}
+\sum_{i=1}^{k}
 \left(\frac1s\right)^{
 \left\lfloor (i-1)n/k \right\rfloor}
 - \left(s-\left(\frac{s^2}{1-s}m 
 - \frac{s^n}{1-s^n} 
    \sum_{\ell=1}^{k} 
    \left(\frac1s\right)^{\left\lfloor (\ell-1)n/k\right\rfloor}
 \right)\right)\left(\frac1s\right)^n \\
    =&\left(\frac1s\right)^n\frac{s^2}{1-s}m-\frac{s^n}{1-s^n}\sum_{i=1}^{k}\left(\frac1s\right)^{\left\lfloor (i-1)n/k \right\rfloor}\\
    =&\left(\left(\frac1s\right)^n-1\right)\frac{s^2}{1-s}m+y_0\\  
=&\left(\frac{s^{2-n}-s^2}{1-s}\right)m+y_0,
\end{align*}
which is $m(x_{j,k/n}-(1-s))+y_0$ by equation \eqref{x-1+s}.

\section{Classifying trajectories}\label{S:proofs}

In this section we draw together the tools developed in 
sections 2 and 3 in order to prove Theorems \ref{T:main1} 
and \ref{T:main2}. 

First, we define the sets $U_s$, $C_s$, and $C_s'$ whose 
properties are described in Theorem~\ref{T:main1}.
\begin{itemize}
\item As in \S\ref{S:staircase}, $C_s$ is the closure 
of the image of either $\Delta_s^-$ or $\Delta_s^+$ (or, 
equivalently, the union of their two images).
\item $U_s$ is the complement of $C_s$.
\item $C_s'$ is the set of slopes of the form 
$\Delta_s^+(k/n)$ or $\Delta_s^-(k/n)$ for some 
$k/n\in\Q$.
\end{itemize}
In other words, $C_s'$ is the set of endpoints of intervals 
in $U_s$. This relationship is parallel to the relationship 
between attracting cycles and saddle connections, as we 
shall see.

\subsection{Reversing directions}

As observed in \S\ref{SS:affine}, the affine group of 
$X_s$ contains an involution $\rho_s$ with derivative 
$-\mathrm{id}$. This map exchanges the subsurfaces 
$X_s^+$ and $X_s^-$, and it reverses the directions of 
linear trajectories, switching backward and forward. 
The material about forward trajectories in $X_s^+$ from 
\S\ref{S:symbolic} can therefore be transferred to 
backward trajectories in $X_s^-$. Notice also that for 
each slope, $X_s$ has six critical trajectories (because 
the cone angle at the singular point is $6\pi$), three 
of which are forward and three backward. (A saddle connection 
is both a forward and a backward critical trajectory.)

\subsection{Cylinder boundaries}

The set $C_s'$ of Theorem~\ref{T:main1}(iii) consists of 
the endpoints of the connected components of $U_s$. That 
is, $C_s'$ contains all slopes of the form $\Delta_s^+(k/n)$ 
or $\Delta_s^-(k/n)$. Suppose $m = \Delta_s^+(k/n)$. Then by 
Theorem~\ref{T:cutting_sequences}(ii), there is a saddle 
connection $\tau_m$ with slope $m$ that starts in the forward 
direction from the bottom of edge $E$ on the left side of 
$X_s^+$ and ends at the top of edge $A$ on the right side 
of $X_s^+$. It follows from Theorem~\ref{T:cantor} that all 
other forward trajectories in $X_s^+$ are asymptotic to 
$\tau_m$. A similar argument applies in the case where 
$m = \Delta_s^-(k/n)$.

\subsection{Attracting cycles}

If $m \in U_s$, then $m \in (\Delta_s^-(k/n),\Delta_s^+(k/n))$ 
for some $k/n \in \Q$. By Theorem~\ref{T:cutting_sequences}(iv), 
there exists a closed trajectory in the direction $m$ that 
crosses edge $A$ $n$ times and edge $B$ $k$ times. Its image 
$\Sigma_m^+$ is therefore an attracting cycle contained in 
an affine cylinder with scaling factor $s^n$. We now claim 
that every trajectory that is forward infinite, except the 
cycle $\Sigma_m^- = \rho_s(\Sigma_m)$, lies in the basin of 
attraction of $\Sigma_m^+$.

To show this claim, we adapt the proof of Theorem~\ref{T:cantor}. 
For these parameters $m$, there are two gaps in the image of 
$f : y \mapsto \{s(y+m)\}$, of lengths $t(1-s)$ and $(1-t)(1-s)$ 
for some $0 < t < 1$. Let $t$ be the value such that the gap of 
length $t(1-s)$ is the lower gap, which starts at $y = 0$. The 
intersections of a trajectory with edge $A$ follow an orbit 
of $f$. We thus consider the set 
$\bigcap_{N=1}^\infty \overline{f^N\big([0,1)\big)}$; any point 
$y_0$ in this set can be written 
\[
y_0 = \sum_{f^N\!(0)\le y_0} t(1 - s) s^{N-1}.
\]
The points of this form are precisely the intersections of 
the cycle $\Sigma_m^+$ with $A$. Therefore any other forward 
infinite trajectory in $X_s^+$ is eventually ``trapped'' in 
the affine cylinder of $\Sigma_m^+$.

Meanwhile, any backward infinite trajectory in $X_s^+$, 
besides $\tau$, eventually reaches edge $E$ and leaves $X_s^+$. 
Correspondingly, any forward infinite trajectory that starts 
in $X_s^-$ and is not contained in $\Sigma_s^-$ 
will eventually reach $E$ and lies in the basin 
of attraction for $\Sigma_s^+$.

\subsection{Attracting laminations} If 
$m \in C_s \setminus C_s'$, then $m$ has the form 
$\Delta_s^\pm(\xi)$ for a unique $\xi \notin \Q$.
In this case, let $\Sigma_m^+$ be the closure of either 
$\tau_\xi^+$ or $\tau_\xi^-$, as defined in 
\S\ref{irrational case} (their closures are the same). 
By Theorem~\ref{T:cantor}, the intersection of $\Sigma_m^+$ 
with the edge $A$ is the Cantor set $K_{s,\xi}$. By 
Theorem~\ref{T:hdim}, the Hausdorff dimension of 
$K_{s,\xi}$ is zero, and consequently $\Sigma_m^+$ is a 
minimal lamination. Let $\Sigma_m^- = \rho_s(\Sigma_m^+)$.

Theorem~\ref{T:cutting_sequences}(i) implies that the 
complement of $\Sigma_m^+$ in $X_s^+$ is a connected 
half-infinite open strip. Applying the involution $\rho_s$ 
shows that the complement of $\Sigma_m^+ \cup \Sigma_m^-$ 
in $X_s$ is an open strip, infinite in both directions. 
A trajectory moving in the forward direction within this 
strip eventually remains inside any neighborhood of 
$\Sigma_m^+$, and so $\Sigma_m^+$ is a forward attractor, 
and its basin of attraction is the complement of $\Sigma_m^-$.

\subsection{Cutting sequences}

We are now ready to prove Theorem~\ref{T:main2}. The 
direction $m \in \R$ has a closed trajectory if and only 
if $m \in \big(\Delta_s^-(k/n),\Delta_s^+(k/n)\big)$ 
for some $k/n \in \Q$, in which case by 
Theorem~\ref{T:cutting_sequences}(iv) $\tau$ has 
the same cutting sequence as a trajectory on $T^2$ 
with slope $k/n$. Any other forward infinite trajectory 
eventually falls in the corresponding affine cylinder, and 
thereafter has the same cutting sequence as $\tau$.

On the other hand, if $m \in C_s \setminus C_s'$, then any 
trajectory that is contained in $\Sigma_s^+$ is in the closure 
of a trajectory $\tau$ whose cutting sequence is Sturmian. 
Any trajectory that is not contained in either $\Sigma_s^+$ 
or $\Sigma_s^-$ is contained in the infinite strip of 
trajectories with the same cutting sequence as $\tau$. 

We conclude that every forward infinite trajectory on $X_s$ 
has a cutting sequence that is either eventually periodic 
or eventually Sturmian.

\section{Related surfaces}\label{S:related}

We conclude by transferring our results to two families 
of surfaces that are closely related to $X_s$. We use the 
observation that $X_s^+$ and $X_s^-$ can be independently 
deformed, and the understanding of trajectories from 
sections \ref{S:staircase} and \ref{S:symbolic} can be 
applied separately to these deformed subsurfaces, to give 
global information about trajectories.

\subsection{Twisting}\label{SS:twisting}

An affine deformation by the matrix 
$M^v = \left(\begin{smallmatrix}
1 & 0 \\ v & 1
\end{smallmatrix}\right)$ transforms trajectories with 
slope $m$ into trajectories with slope $m + v$. Let 
$X_s^u$ be the surface obtained from $X_s$ after deforming 
$X_s^-$ by $M^{u/s}$. (See Figure~\ref{F:twist}.) When $u$ 
is an integer, $M^{u/s}$ acts as a power of a full Dehn 
twist on $X_s^-$, and so $X_s^u$ is isomorphic to $X_s$. 

\begin{figure}[h]
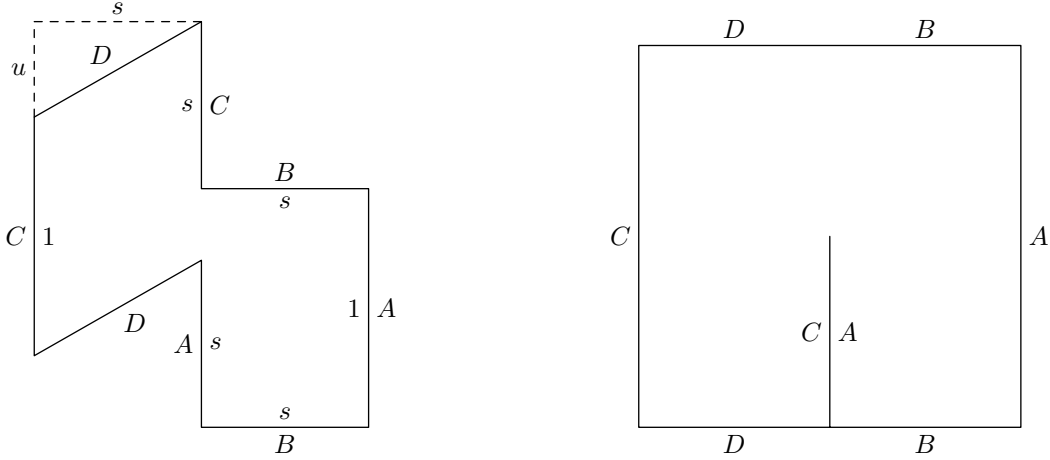

\includegraphics{surfacefig-6.mps}
\hspace{1in}
\includegraphics{surfacefig-7.mps}
\caption{{\sc Left:} A surface $X_s^u$ with $0 < u < 1$. 
Lower-case letters indicate dimensions. Capital letters 
indicate gluings between edges.
{\sc Right:} The surface $X_{1/2}^{1/2}$, which was the 
first surface the authors considered.}
\label{F:twist}
\end{figure}

For non-integer values of $u$, however, a trajectory on 
$X_s^u$ can have different forward and backward behavior. 
Briefly, given two slopes $m^+, m^-$, set $u = s(m^+ - m^-)$. 
Then $M^{u/s}$ transforms trajectories with slope $m^-$ 
into trajectories with slope $m^+$, and so a trajectory 
on $X_s^u$ having slope $m^+$ will behave like a trajectory 
on $X_s^+$ with slope $m^+$ when on the right half of $X_s^u$ 
and like a trajectory on $X_s^-$ with slope $m^-$ when on the 
left half of $X_s^u$. In particular, we can construct a 
surface $X_s^u$ for which there exists a direction $m$ 
exhibiting any of the following behaviors:
\begin{itemize}
\item $m$ has an attracting cycle with scaling factor $h^+$ 
and a repelling cycle with scaling factor $h^-$ for any choice 
of $h^+, h^- \in (0,1)$;
\item $m$ has an attracting lamination and a repelling cycle; 
\item $m$ has an attracting lamination and a repelling 
lamination, and these two laminations are not homeomorphic.
\end{itemize}

\subsection{Scaling}\label{SS:scaling}

We can use different parameters to determine the relative 
shapes of the two ``halves'' of the surface, as well. To 
be specific, given $s_1,s_2 \in (0,1)$, let $R_{s_1}^+$ be 
a rectangle with horizontal side lengths $s_1/(1 - s_1)$ 
and vertical side lengths $1/(1 - s_1)$, and let $R_{s_2}^-$ 
be a rectangle with horizontal side lengths $s_2/(1 - s_2)$ 
and vertical side lengths $1/(1 - s_2)$. (These rectangles 
are similar to the ones defined in \S\ref{SS:mainexample}; 
for purposes of constructing a homothety surface, only the 
similarity class of each polygon matters, which justifies 
our reuse of the same notation.) Join the top left edge of 
$R_{s_1}^+$ to the bottom right edge of $R_{s_2}^-$ along 
a segment of length $1$, and identify the remaining sides 
of $R_{s_1}^+$ and $R_{s_2}^-$ as shown on the right side 
of Figure~\ref{F:scaling}. Call the resulting surface 
$X_{s_1,s_2}$.

\begin{figure}[h]
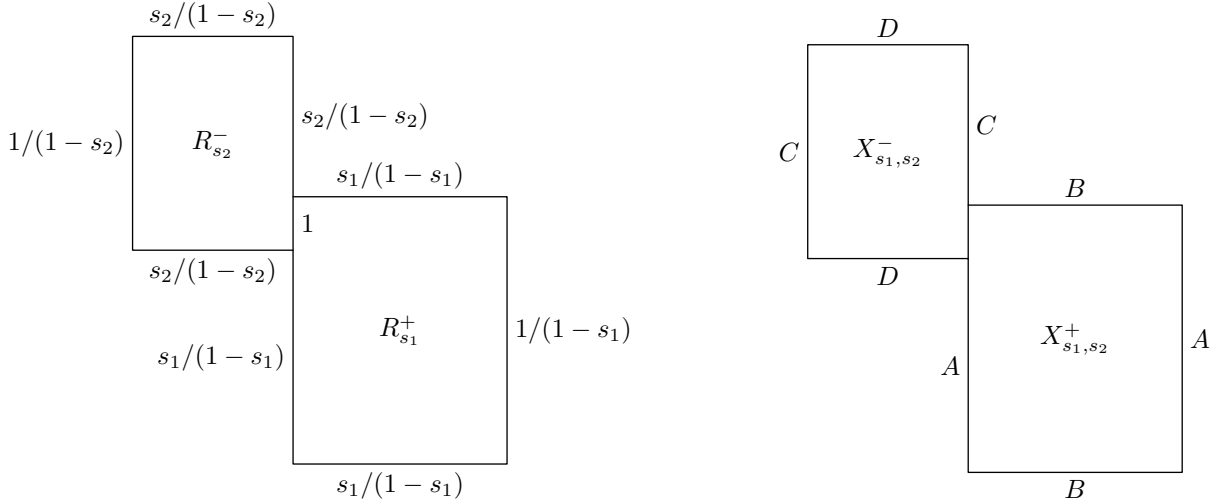

\includegraphics{surfacefig-8.mps}
\hspace{0.65in}
\includegraphics{surfacefig-9.mps}
\caption{{\sc Left:} The rectangles $R_{s_1}^+$ and $R_{s_2}^-$.
{\sc Right:} Edge identifications to form the surface 
$X_{s_1,s_2}$. $X_{s_1,s_2}^+$ is isomorphic to $X_{s_1}^+$, 
and $X_{s_1,s_2}^-$ is isomorphic to $X_{s_2}^-$, as defined 
in \S\ref{SS:mainexample}.}
\label{F:scaling}
\end{figure}

With this construction, we can again exhibit all of the 
trajectory behaviors mentioned in \S\ref{SS:twisting}. 
Indeed, given any $\xi > 0$, $m > 1$, it is possible to find 
$s \in (0,1)$ such that $m = \Delta_s^\pm(\xi)$ (when 
$\xi \notin \Q$) or $m \in [\Delta_s^-(\xi),\Delta_s^+(\xi)]$ 
(when $\xi \in \Q$). Thus one can simply choose two values 
$\xi_1, \xi_2 > 0$ that will produce the desired behaviors, 
and then find values of $s_1, s_2$ so that 
$\Delta_{s_1}^+(\xi_1) = \Delta_{s_2}^+(\xi_2)$.

\end{document}